\newcommand{\driverOption}{}
  \renewcommand{\driverOption}{pdftex}
  \renewcommand{\driverOption}{dvips}
\newcommand{\hyperrefDriverOption}{}
	\renewcommand{\hyperrefDriverOption}{pdftex}
	\renewcommand{\hyperrefDriverOption}{hypertex}
	\newcommand{\PG}[1]{\marginpar{\parbox{4cm}{{\small {\bf PG:} #1}}}}
	\newcommand{\TM}[1]{\marginpar{\parbox{4cm}{{\small {\bf TM:} #1}}}}
	\newcommand{\PG}[1]{}
	\newcommand{\TM}[1]{}	
\newtheorem{theorem}{Theorem}
\newtheorem{lemma}[theorem]{Lemma}
\newtheorem{conjecture}[theorem]{Conjecture}
\theoremstyle{definition}
\theoremstyle{remark}
\newtheorem{question}[theorem]{Question}
\long\def\symbolfootnote[#1]#2{\begingroup
\def\thefootnote{\fnsymbol{footnote}}\footnote[#1]{#2}\endgroup}
\begin{document}

\begin{center}

\renewcommand{\thefootnote}{\fnsymbol{footnote}}

\LARGE Trimming and gluing Gray codes\footnote{An extended abstract of this paper has appeared in the Proceedings of the 34th International Symposium on Theoretical Aspects of Computer Science (STACS 2017) \cite{MR3758308}.}
\vspace{2mm}

\small

\begingroup
\begin{tabular}{l@{\hspace{2em}}l@{\hspace{2em}}l}
  \Large Petr Gregor & \Large Torsten M\"{u}tze \\[2mm]
  Department of Theoretical Computer Science & Institut f\"{u}r Mathematik \\
  and Mathematical Logic & TU Berlin \\
  Charles University & 10623 Berlin, Germany \\
  11800 Praha 1, Czech Republic & \\
  {\small\tt gregor@ktiml.mff.cuni.cz} & {\small\tt muetze@math.tu-berlin.de}
\end{tabular}%
\endgroup

\vspace{5mm}

\small

\begin{minipage}{0.8\linewidth}
\textsc{Abstract.}
We consider the algorithmic problem of generating each subset of $[n]:=\{1,2,\ldots,n\}$ whose size is in some interval $[k,l]$, $0\leq k\leq l\leq n$, exactly once (cyclically) by repeatedly adding or removing a single element, or by exchanging a single element.
For $k=0$ and $l=n$ this is the classical problem of generating all $2^n$ subsets of $[n]$ by element additions/removals, and for $k=l$ this is the classical problem of generating all $\binom{n}{k}$ subsets of $[n]$ by element exchanges.
We prove the existence of such cyclic minimum-change enumerations for a large range of values $n$, $k$, and $l$, improving upon and generalizing several previous results.
For all these existential results we provide optimal algorithms to compute the corresponding Gray codes in constant $\cO(1)$ time per generated set and $\cO(n)$ space.
Rephrased in terms of graph theory, our results establish the existence of (almost) Hamilton cycles in the subgraph of the $n$-dimensional cube $Q_n$ induced by all levels $[k,l]$.
We reduce all remaining open cases to a generalized version of the middle levels conjecture, which asserts that the subgraph of $Q_{2k+1}$ induced by all levels $[k-c,k+1+c]$, $c\in\{0,1,\ldots,k\}$, has a Hamilton cycle.
We also prove an approximate version of this generalized conjecture, showing that this graph has a cycle that visits a $(1-o(1))$-fraction of all vertices.
\end{minipage}

\vspace{2mm}

\begin{minipage}{0.8\linewidth}
\textsc{Keywords:} Gray code, subset, combination, loopless algorithm, hypercube
\end{minipage}

\vspace{2mm}

\end{center}

\vspace{3mm}

\section{Introduction}

Generating all objects in a combinatorial class such as permutations, subsets, combinations, partitions, trees, strings etc.\ is one of the oldest and most fundamental algorithmic problems, and such generation algorithms appear as core building blocks in a wide range of practical applications, see the survey \cite{MR1491049}.
In fact, half of the most recent volume \cite{MR3444818} of Donald Knuth's seminal series \emph{The Art of Computer Programming} is devoted entirely to this fundamental subject.
The ultimate goal for algorithms that efficiently generate each object of a particular combinatorial class exactly once is to generate each new object in constant time. Such optimal algorithms are sometimes called \emph{loopless algorithms}, a term coined by Ehrlich in his influential paper \cite{MR0366085}.
Note that a constant-time algorithm requires in particular that consecutively generated objects differ only in a constant amount, e.g., in a single transposition of a permutation, in adding or removing a single element from a set, or in a single tree rotation operation. These types of orderings
have become known as \emph{combinatorial Gray codes}.
Here are two fundamental examples for this kind of generation problems:
(1)~The so-called \emph{reflected Gray code} is a method to generate all $2^n$ many subsets of $[n]:=\{1,2,\ldots,n\}$ by repeatedly adding or removing a single element.
It is named after Frank Gray, a physicist and researcher at Bell Labs, and appears in his patent \cite{gray:patent}.
The reflected Gray code has many interesting properties, see \cite[Section~7.2.1.1]{MR3444818}, and there is a simple loopless algorithm to compute it \cite{MR0366085,MR0424386}.
(2)~Of similar importance in practice is the problem of generating all $\binom{n}{k}$ many $k$-element subsets of $[n]$ by repeatedly exchanging a single element.
Also for this problem, loopless algorithms are well-known \cite{MR0349274,MR0366085,MR0424386,MR782221,MR821383,MR936104,MR995888,MR1352777} (see also \cite[Section 7.2.1.3]{MR3444818}).

In this work we consider far-ranging generalizations of the classical problems~(1) and (2).
Specifically, we consider the algorithmic problem of generating all, or almost all, subsets of $[n]$ whose size is in some interval $[k,l]$, where $0\leq k\leq l\leq n$, by repeatedly adding or removing a single element, or by exchanging a single element, as further detailed later.
The classical problems~(1) and (2) can be seen as the special cases where $k=0$ and $l=n$, or where $k=l$, respectively.
The entire parameter range in between those special cases offers plenty of room for surprising discoveries and hard research problems, as Figure~\ref{fig:thms} illustrates.

In a computer a subset of $[n]$ is conveniently represented by the corresponding characteristic bitstring $x$ of length~$n$, where all the 1s of $x$ correspond to the elements contained in the set, and the 0s to the elements not contained in the set.
E.g., for $n=5$ the subset $\{1,2,5\}$ corresponds to the bitstring $11001$.
The aforementioned subset generation problems can thus be rephrased as Hamilton cycle problems in subgraphs of the \emph{cube $Q_n$}, the graph that has as vertices all bitstrings of length~$n$, with an edge between any two vertices, i.e., bitstrings, that differ in exactly one bit.
We refer to the number of 1s in a bitstring $x$ as the \emph{weight of $x$}, and we refer to the vertices of $Q_n$ with weight $k$ as the \emph{$k$-th level} of $Q_n$.
Note that there are $\binom{n}{k}$ vertices on level~$k$.
Moreover, we let $Q_{n,[k,l]}$, $0\leq k\leq l\leq n$, denote the subgraph of $Q_n$ induced by all levels $[k,l]$.
In terms of sets, the vertices of the cube $Q_n$ correspond to subsets of $[n]$, and flipping a bit along an edge corresponds to adding or removing a single element.
Continuing the previous example, moving from the vertex $11001$ to $11101$ corresponds to adding the element 3 to the set $\{1,2,5\}$, yielding the set $\{1,2,3,5\}$.
The weight of a bitstring corresponds to the size of the set, and the vertices on level~$k$ correspond to all $k$-element subsets of $[n]$.

One of the hard instances of the aforementioned general enumeration problem in $Q_{n,[k,l]}$ is when $n=2k+1$ and $l=k+1$.
The existence of a Hamilton cycle in the graph $Q_{2k+1,[k,k+1]}$ for any $k\geq 1$ is asserted by the well-known \emph{middle levels conjecture}, raised independently in the 80's by Havel~\cite{MR737021} and Buck and Wiedemann~\cite{MR737262}.
The conjecture has also been attributed to Dejter, Erd{\H{o}}s, Trotter~\cite{MR962224} and various others, and also appears in the popular books \cite{MR2034896,MR3444818,MR2858033}.
The middle levels conjecture has attracted considerable attention over the last 30 years \cite{MR1275228,MR1350586,MR1329390,MR2046083,MR962223,MR962224,MR1268348,MR2195731,MR2609124,MR2946389,MR2548541,shimada-amano}, and a positive solution, i.e., an existence proof for a Hamilton cycle in $Q_{2k+1,[k,k+1]}$ for any $k\geq 1$, has been announced only recently.

\begin{theorem}[\cite{MR3483129}]
\label{thm:mlc}
For any $k\geq 1$, the graph $Q_{2k+1,[k,k+1]}$ has a Hamilton cycle.
\end{theorem}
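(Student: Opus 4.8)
The plan is to prove Theorem~\ref{thm:mlc} in two stages, following the trimming-and-gluing paradigm that the rest of the paper is built around: first exhibit an explicit $2$-regular spanning subgraph (a $2$-factor) of $M_k:=Q_{2k+1,[k,k+1]}$ consisting of many disjoint cycles, and then merge all of these cycles into a single Hamilton cycle by a carefully chosen sequence of local edge swaps.

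For the first stage I would use the well-studied correspondence, via the cycle lemma, between the weight-$k$ (resp.\ weight-$(k+1)$) bitstrings of length $2k+1$ and pairs consisting of a Dyck path of semilength $k$ together with a cyclic rotation in $\mathbb{Z}_{2k+1}$; here one uses that every such bitstring is aperiodic, so that all rotation orbits have full size $2k+1$. Reading $1$s as up-steps and $0$s as down-steps and matching steps by the usual bracketing rule yields two natural perfect matchings between levels $k$ and $k+1$ — a ``canonical'' one and a ``shifted'' one — whose union is a $2$-factor $\mathcal{C}_k$ of $M_k$. The first thing I would pin down is the cycle structure of $\mathcal{C}_k$: each cycle is an equivalence class of bitstrings under the moves of the $2$-factor, and these classes should be encoded by rooted plane trees with $k$ edges (equivalently Dyck paths), which simultaneously gives the number of cycles and, more importantly, a recursive handle on the family of cycles via operations such as deleting a pendant edge.

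For the second stage, a \emph{flip} is a move that takes two level-$k$ vertices $a,c$ and two level-$(k+1)$ vertices $b,d$ with $ab,cd$ on the current $2$-factor and $ad,cb\in E(Q_{2k+1})$, and replaces $\{ab,cd\}$ by $\{ad,cb\}$; if $ab$ and $cd$ lie on different cycles this merges the two cycles into one. I would then (i)~define an auxiliary graph $\mathcal{H}$ on the cycles of $\mathcal{C}_k$, with one edge for each available flip, (ii)~for every edge of $\mathcal{H}$ fix a concrete flip located at a canonical spot of the two cycles, engineered so that flips indexed by any forest of $\mathcal{H}$ involve pairwise disjoint vertex sets and hence can be executed simultaneously, and (iii)~prove that $\mathcal{H}$ is connected. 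Granting these, performing the flips along a spanning tree of $\mathcal{H}$ turns $\mathcal{C}_k$ into a connected $2$-regular spanning subgraph of $M_k$, i.e., a Hamilton cycle.

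The hard part will be step~(iii), the connectivity of $\mathcal{H}$, together with the bookkeeping in step~(ii) that keeps the flips non-interfering. I would try to establish connectivity by induction on $k$: the plane-tree encoding of the cycles of $\mathcal{C}_k$ should admit a ``peeling'' map (delete a pendant edge) that projects onto the cycles of $\mathcal{C}_{k-1}$ compatibly with flips, so that connectivity of $\mathcal{H}_k$ reduces to connectivity of $\mathcal{H}_{k-1}$ plus the fiberwise statement that all cycles of $\mathcal{C}_k$ peeling to a fixed cycle of $\mathcal{C}_{k-1}$ are already joined to one another by flips. This last fiberwise claim — showing that the various ways of re-attaching the peeled-off structure are linked by legal flips — is where the genuine combinatorial work sits; a direct check of a small base case such as $k=1,2$ (where $M_k$ is tiny) then closes the induction. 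An alternative to the induction, probably of comparable difficulty, is to organize the plane trees themselves into a single tree in which every non-root tree arises from its parent by relocating one edge, verify that each parent--child pair supports a flip, and argue that the whole chosen collection of flips is compatible. Either way, this yields a Hamilton cycle in $Q_{2k+1,[k,k+1]}$ for all $k\geq1$, which is exactly the input needed for the reductions of the generalized middle levels conjecture in the later sections.
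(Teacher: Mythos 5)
First, note that the paper does not prove Theorem~\ref{thm:mlc} at all: it is imported wholesale from the reference \cite{MR3483129}, and everything in Sections~\ref{sec:sat}--\ref{sec:algo} treats it as a black box. What you have written is, at the level of its skeleton, a faithful outline of the strategy actually used in that reference (and its later simplifications): a $2$-factor of $Q_{2k+1,[k,k+1]}$ built from two explicit perfect matchings, whose cycles are parametrized by plane trees with $k$ edges via the Dyck-path/cycle-lemma encoding, followed by cycle-merging flips organized along a spanning tree of an auxiliary graph on the set of cycles. So you have correctly identified the right architecture.

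However, as a proof the proposal has a genuine gap, and it sits exactly where you say the "genuine combinatorial work" is: steps~(ii) and (iii) are not arguments but goals. The connectivity of the flip graph $\mathcal{H}$ and, even more delicately, the existence of a \emph{simultaneously executable} family of flips indexed by a spanning tree (pairwise disjoint vertex sets, or at least a compatibility condition guaranteeing that performing them all still yields a single cycle) constitute essentially the entire content of the theorem. The "$2$-factor plus flips" paradigm had been known for decades before \cite{MR3483129}; what was missing, and what your proposal also does not supply, is (a)~a proof that the peeling map on plane trees interacts with flips the way you hope (the fiberwise claim that all cycles over a fixed cycle of $\mathcal{C}_{k-1}$ are already linked by flips is not automatic and in the actual proof requires constructing explicit gadget paths, not single edge swaps), and (b)~the verification that the chosen flips do not interfere, which in the real proof forces the flips to be localized at canonically chosen positions determined by the tree structure and consumes a substantial part of the argument. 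Without these two lemmas the induction does not close, so the proposal should be regarded as a (correct) plan for rediscovering the proof of \cite{MR3483129} rather than a proof.
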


The following generalization of the middle levels conjecture was proposed in \cite{MR2609124}.

\begin{conjecture}[\cite{MR2609124}]
\label{conj:gen-mlc}
For any $k\geq 1$ and $c\in\{0,1,\ldots,k\}$, the graph $Q_{2k+1,[k-c,k+1+c]}$ has a Hamilton cycle.
\end{conjecture}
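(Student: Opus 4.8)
\emph{Strategy.} The plan is to argue by induction on $c$, the base case $c=0$ being Theorem~\ref{thm:mlc}. In the inductive step we are handed a Hamilton cycle of the inner graph $Q_{2k+1,[k-c+1,k+c]}$ and must absorb the two new extreme levels $k-c$ and $k+1+c$; since the complementation automorphism $x\mapsto\bar x$ of $Q_{2k+1}$ exchanges level $j$ with level $2k+1-j$ and fixes the window $[k-c,k+1+c]$, it suffices to explain how to absorb the bottom level. The difficulty that dictates the shape of the whole argument is structural: every neighbour of a level-$(k-c)$ vertex lies in level $k-c+1$, while on a Hamilton cycle of the inner graph both cycle-edges at each level-$(k-c+1)$ vertex point upward to level $k-c+2$; thus no level-$(k-c)$ vertex can be spliced in by any modification confined to levels $k-c$ and $k-c+1$, and one is forced to reroute the cycle inside the band $\{k-c,k-c+1,k-c+2\}$. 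In particular the inner Hamilton cycle cannot be treated as a black box.

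\emph{Creating room.} To make the induction go through I would carry a strengthened hypothesis: the Hamilton cycle of $Q_{2k+1,[k-c,k+1+c]}$ uses all edges of a prescribed matching $R_c$ that saturates the bottom level $k-c$ (inside the bottom bipartite shell between levels $k-c$ and $k-c+1$), together with the mirror matching $R_c'$ near the top. Such matchings exist because the Boolean lattice has the normalized matching property, and one fixes a single canonical bit-flip rule so that all the $R_c$ are cut from one family. The inductive step then runs as follows. First delete from the inner Hamilton cycle the prescribed matching $R_{c-1}$ near its bottom, which breaks it into paths and frees one attachment slot at each of the relevant level-$(k-c+1)$ vertices. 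Next, in the bottom bipartite shell between levels $k-c$ and $k-c+1$ — a biregular bipartite graph whose sides have sizes $\binom{2k+1}{k-c}<\binom{2k+1}{k-c+1}$, again with the normalized matching property — choose a family of vertex-disjoint paths covering all of level $k-c$, each of the form $q_0-w_1-q_1-\cdots-w_t-q_t$ with the $w_i$ on level $k-c$ and the $q_i$ on level $k-c+1$. Finally interleave the broken-cycle paths with these shell paths, matching up their level-$(k-c+1)$ endpoints, perform the mirror construction at the top, and verify that the result closes into a single cycle; then relocate the next matchings $R_c,R_c'$ inside it to restore the strengthened hypothesis.

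\emph{The main obstacle.} The delicate point is the last one: arranging the interleaving so that we obtain one cycle rather than many requires control over the cyclic order in which the broken-cycle paths emerge at level $k-c+1$, and the combinatorial structure of the Hamilton cycle delivered by Theorem~\ref{thm:mlc} is understood only loosely. The more robust alternative — and the route that currently yields only the approximate statement, a cycle through a $(1-o(1))$-fraction of the vertices — is to proceed globally: build a $2$-factor of $Q_{2k+1,[k-c,k+1+c]}$ from the middle-levels Hamilton cycle together with symmetric chains on the outer levels, closed up by canonical matchings, and then merge its components pairwise by local $4$-cycle swaps that replace two parallel edges $\{x,x+e_i\}$, $\{x+e_j,x+e_i+e_j\}$ by $\{x,x+e_j\}$, $\{x+e_i,x+e_i+e_j\}$. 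The conjecture follows once one proves that the auxiliary graph on the set of $2$-factor components, with an edge for each available swap, is connected; ruling out a stubborn partition of the components that no swap crosses is, I expect, precisely where the difficulty for $c\ge1$ lies, and a full proof would have to marry the parity and counting arguments used for the middle levels with a new understanding of how chains on the outer levels interlock with the middle-levels cycle.
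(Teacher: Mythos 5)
The statement you are addressing is Conjecture~\ref{conj:gen-mlc}, which is an \emph{open conjecture} in this paper (attributed to \cite{MR2609124}); the paper does not prove it, and neither does your proposal. What the paper actually establishes is: the case $c=0$ (Theorem~\ref{thm:mlc}), the cases $c\in\{k,k-1,k-2\}$ (cited from earlier work), small cases $k\leq 6$ by computer search, and---closest in spirit to your second route---Theorem~\ref{thm:long}, a cycle through a $(1-o(1))$-fraction of the vertices, obtained either by trimming the reflected Gray code or by gluing the $c+1$ saturating cycles of Theorem~\ref{thm:sat2} between consecutive pairs of levels as in the proof of Theorem~\ref{thm:sat}~(iii). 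Your ``robust alternative'' paragraph therefore essentially rediscovers the territory of Theorem~\ref{thm:long}, but an approximate cycle is not the conjecture, and you correctly stop short of claiming it is.

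Concretely, the two places where your induction on $c$ halts are exactly where a proof would have to live, and neither is resolved. First, after deleting the matching $R_{c-1}$ from the inner Hamilton cycle and interleaving the broken paths with shell paths covering level $k-c$, you need the union to close into a \emph{single} cycle; this depends on the cyclic order in which the broken paths return to level $k-c+1$, over which you have no control. Worse, the strengthened hypothesis (that the inner Hamilton cycle contains a prescribed saturating matching $R_{c-1}$) is unjustified already at the base case: Theorem~\ref{thm:mlc} gives no such structural guarantee, and permuting coordinates cannot map an arbitrary set of cycle edges onto a prescribed matching. Second, the connectivity of the auxiliary graph on $2$-factor components under $4$-cycle swaps is stated as the remaining task, not proved; for the two middle levels alone, a statement of precisely this flavour is the technical heart of the (long and difficult) proof of Theorem~\ref{thm:mlc}, so it cannot be treated as a verification step here. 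In short, your write-up is a reasonable research plan that correctly identifies the obstacles (and its diagnosis of why level $k-c$ cannot be absorbed locally is sound), but it proves neither Conjecture~\ref{conj:gen-mlc} nor anything beyond what the paper already shows; if you intend it as a contribution, it should be reframed as a discussion of approaches, or sharpened into an actual proof of one of the open subcases such as $Q_{2k+1,[k-1,k+2]}$.
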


Conjecture~\ref{conj:gen-mlc} clearly holds for all $k\geq 1$ and $c=k$ as $Q_{2k+1,[0,2k+1]}=Q_{2k+1}$, so this is problem (1) from before.
It is known that the conjecture also holds for all $k\geq 1$ and $c=k-1$ \cite{MR1887372, locke-stong:03} and $c=k-2$ \cite{MR2609124}.
By Theorem~\ref{thm:mlc} we know that it also holds for all $k\geq 1$ and $c=0$.
As far as small cases are concerned, computer experiments show that $Q_{2k+1,[k-c,k+1+c]}$ indeed has a Hamilton cycle for all $k\leq 6$ and all $c\in\{0,1,\ldots,k\}$. The largest instance in this range not yet covered by the aforementioned general results is $Q_{13,[3,10]}$ with 8008 vertices.

Another generalization of Theorem~\ref{thm:mlc} in a slightly different direction, which still remains a special case in our general framework, is the following result.

\begin{theorem}[\cite{MR3759914}]
\label{thm:sat2}
For any $n\geq 3$ and $k\in\{1,2,\ldots,n-2\}$, the graph $Q_{n,[k,k+1]}$ has a cycle that visits all vertices in the smaller bipartite class.
\end{theorem}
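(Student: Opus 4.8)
The plan is to reduce the statement to the case $k\le (n-1)/2$: the complementation map $A\mapsto [n]\setminus A$ is an automorphism of $Q_n$ carrying $Q_{n,[k,k+1]}$ to $Q_{n,[n-k-1,n-k]}$, so we may assume $2k+1\le n$, and then $\binom{n}{k}\le\binom{n}{k+1}$, i.e.\ level $k$ is the smaller bipartite class. Thus it suffices to construct, for all $k\ge 1$ and all $n\ge 2k+1$, a cycle in $Q_{n,[k,k+1]}$ visiting all $\binom{n}{k}$ vertices of level $k$. We argue by induction on $n$. The base case $n=2k+1$ is immediate from Theorem~\ref{thm:mlc}, whose Hamilton cycle of $Q_{2k+1,[k,k+1]}$ visits every vertex, in particular all of level $k$.

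For the inductive step, let $n\ge 2k+2$ and split off coordinate $n$: the sets avoiding $n$ induce a copy $A\cong Q_{n-1,[k,k+1]}$, the sets containing $n$ induce a copy $B\cong Q_{n-1,[k-1,k]}$ (via $S\cup\{n\}\mapsto S$), and the only edges between $A$ and $B$ form the perfect matching $M=\{\{S,S\cup\{n\}\}:S\subseteq[n-1],\,|S|=k\}$, joining the level-$k$ vertices of $A$ to the level-$(k+1)$ vertices of $B$. The level-$k$ vertices of $Q_n$ that we must visit are exactly the level-$k$ vertices of $A$ together with the level-$(k-1)$ vertices of $B$; since $n-1\ge 2k+1$, level $k-1$ is the smaller class of $B$, so the induction hypothesis applies to $B$ too (when $k=1$, $B$ is a star, handled separately below). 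We thus obtain a cycle $C_A$ in $A$ through all level-$k$ vertices of $A$, and a cycle $C_B$ in $B$ through all level-$(k-1)$ vertices of $B$.

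The idea is now to convert $C_B$ into a path $P_B$ in $B$ that still covers all level-$(k-1)$ vertices but has both endpoints on level $k$ of $B$ — the ``matching side'' — for instance by deleting a level-$k$ vertex of $C_B$ to obtain a path between two level-$(k-1)$ vertices, and prolonging the two ends into level $k$ (possible because level $k$ is the larger side of $B$). If one can then select a level-$(k+1)$ vertex $w$ of $C_A$ whose two neighbours $w\setminus\{a\}$, $w\setminus\{b\}$ along $C_A$ are matched by $M$ to the two endpoints $(w\setminus\{a\})\cup\{n\}$, $(w\setminus\{b\})\cup\{n\}$ of $P_B$, then one replaces the subpath $(w\setminus\{a\})-w-(w\setminus\{b\})$ of $C_A$ by $(w\setminus\{a\})-(w\setminus\{a\})\cup\{n\}-P_B-(w\setminus\{b\})\cup\{n\}-(w\setminus\{b\})$: this deletes only the level-$(k+1)$ vertex $w$ (which need not be visited) and splices in $P_B$, producing one cycle through all required level-$k$ vertices of $Q_n$. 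For $k=1$ this works verbatim with $B$ a star and $P_B$ a two-edge path through its centre, and the endpoints of $P_B$ are then automatically realised by any two consecutive level-$1$ vertices of $C_A$.

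The hard part will be precisely this gluing. The matching $M$ meets $C_A$ at vertices that must be visited (level $k$ of $A$) but meets $B$ at vertices that need \emph{not} be visited (level $k$ of $B$, not level $k-1$), and these sit on opposite sides of the bipartition of $B$; hence one cannot cut $C_B$ into matching-ended paths without discarding some to-be-visited vertex, and the endpoints of $P_B$ cannot be prescribed freely while keeping $P_B$ spanning on the small side of $B$. Aligning the endpoints of $P_B$ with a pair of vertices occurring consecutively along $C_A$ will, I expect, force one to carry a strengthened statement through the induction — e.g.\ that the required cycle can be chosen to contain a prescribed edge, or a prescribed path of length two centred at a vertex of the larger class. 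Such a strengthening is available at the base case: $Q_{2k+1,[k,k+1]}$ is edge-transitive, so by Theorem~\ref{thm:mlc} every edge lies on a Hamilton cycle, and it is likewise transitive on length-two paths centred at a level-$(k+1)$ vertex (any prescribed such configuration is realised by a suitable permutation of $[2k+1]$), so every such path lies on a Hamilton cycle. Finding the precise form of the strengthening that survives the peel-off-a-coordinate step, together with the bookkeeping needed to realise the prolongation of $P_B$ and to dispatch the small cases ($k=1$, and $n$ near $2k+1$), is where the substance of the proof lies.
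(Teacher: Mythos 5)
Your strategy coincides with the one behind the paper's treatment of this theorem: reduce to $k\le (n-1)/2$ by complementation, induct on $n$ with Theorem~\ref{thm:mlc} as the base case $n=2k+1$, split off the last coordinate into $A\cong Q_{n-1,[k,k+1]}$ and $B\cong Q_{n-1,[k-1,k]}$, and splice a path through $B$ that spans its smaller side into the cycle of $A$. (Algorithm~\ref{alg:satcycle2} together with Lemma~\ref{lem:satcycle-correct} is exactly this induction made explicit, and the ``virtual 2-path'' device in the proof of Theorem~\ref{thm:sat}~(iv) is the same gluing.) However, what you have written is a plan, not a proof: the inductive step is left unexecuted at precisely the point you yourself flag. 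Two concrete things are missing. First, your construction of $P_B$ fails as stated: deleting a level-$k$ vertex of $C_B$ leaves a path between two level-$(k-1)$ vertices, and ``prolonging the two ends into level~$k$'' requires each endpoint to have a level-$k$ neighbour off the path, which the bare induction hypothesis does not supply; and even granted such a $P_B$, its two endpoints must be the $M$-partners of two level-$k$ vertices of $A$ that are joined by a common level-$(k+1)$ neighbour sitting between them on $C_A$ --- a coincidence you have no means to arrange. Second, you correctly diagnose that a strengthened induction hypothesis is needed and that the base case supports one (by transitivity of coordinate permutations on 2-paths centred at a level-$(k+1)$ vertex), but you neither state the strengthening precisely nor verify that the spliced cycle satisfies it again, so the induction is not closed.

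For comparison, the invariant that does work (Lemma~\ref{lem:satcycle-correct}, stripped of the stack formalism) is: $Q_{n,[k,k+1]}$ with $n\ge 2k+1$ contains a path from $a_{n,k}=0^{n-k}1^k$ to $b_{n,k}=0^{n-k-1}1^k0$ visiting all of level~$k$, omitting their common upper neighbour $a_{n,k+1}$ (through which the cycle is then closed), and --- this is the extra degree of freedom your splicing needs --- omitting a second level-$(k+1)$ vertex $b_{n,k+1}$ adjacent to $b_{n,k}$ whenever $n>2k+1$. The step from $n-1$ to $n$ concatenates the path for $Q_{n-1,[k-1,k]}\circ 1$ (which starts at $a_{n,k}=a_{n-1,k-1}\circ 1$) with the path for $Q_{n-1,[k,k+1]}\circ 0$ (which ends at $b_{n,k}=a_{n-1,k}\circ 0$) via the two flips $b_{n-1,k-1}\circ 1\to b_{n-1,k}\circ 1\to b_{n-1,k}\circ 0$, and the first flip is legal only because $b_{n-1,k}\circ 1$ was guaranteed omitted by the first path. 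Pinning down this invariant (or an equivalent prescribed-2-path formulation), checking it in the base case, and verifying its propagation is the substance you still owe; until then the argument has a genuine gap.
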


The idea for the proof of Theorem~\ref{thm:sat2} based on induction over $n$ was first presented in \cite{MR737021}.
In that paper, the theorem was essentially proved conditional on the validity of the hardest case $n=2k+1$, the middle levels conjecture, which was established only much later, see Theorem~\ref{thm:mlc}.
In \cite{MR3759914}, Theorem~\ref{thm:sat2} was proved unconditionally, and the proof technique was refined further to also prove Hamiltonicity results for the so-called bipartite Kneser graphs, another generalization of the middle levels conjecture.

Conjecture~\ref{conj:gen-mlc} and Theorem~\ref{thm:sat2} immediately suggest the following common generalization:
For which intervals $[k,l]$ does the cube $Q_{n,[k,l]}$ have a Hamilton cycle?
The graph $Q_{n,[k,l]}$ is bipartite with the two partition classes given by the parity of weight of the vertices, and it is clear that a Hamilton cycle can exist only if the two partition classes have the same size, which happens only for odd dimension $n$ and between two symmetric levels $k$ and $l=n-k$, the case covered by Conjecture~\ref{conj:gen-mlc}, or for even dimension $n$ and $[k,l]=[0,n]$.
However, we may slightly relax this question, and ask for a long cycle.
To this end, we denote for any bipartite graph $G$ by $v(G)$ the number of vertices of $G$, and by $\delta(G)$ the difference between the larger and the smaller partition class.
Note that in any bipartite graph $G$ the length of any cycle is at most $v(G)-\delta(G)$, i.e., the length of a cycle that visits all vertices in the smaller partition class.
We call such a cycle a \emph{saturating cycle}, see Figure~\ref{fig:cycles}~(b).
Observe that if both partition classes have the same size, i.e., $\delta(G)=0$, then a saturating cycle is a Hamilton cycle. Hence saturating cycles naturally generalize Hamilton cycles for unbalanced bipartite graphs.
The right common generalization of Conjecture~\ref{conj:gen-mlc} and Theorem~\ref{thm:sat2} therefore is:

\begin{question}
\label{quest:sat}
For which intervals $[k,l]$ does the cube $Q_{n,[k,l]}$ have a saturating cycle?
\end{question}

\begin{figure}
\centering
\includegraphics[scale=0.916]{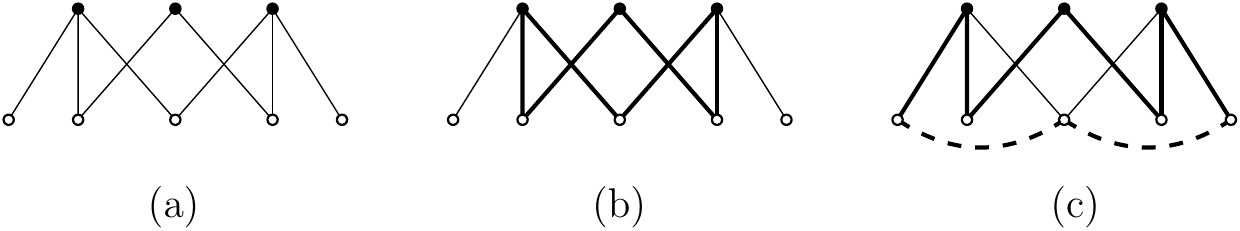} % scale down from 12pt to 11pt font size
\caption{(a) An example of an unbalanced bipartite graph $G$ with $\delta(G)=2$.
(b) A saturating cycle of $G$.
(c) A tight enumeration of $G$, where the dashed edges represent distance-2 steps.}
\label{fig:cycles}
\end{figure}

A saturating cycle necessarily omits exactly $\delta(Q_{n,[k,l]})$ vertices from the larger bipartite class.
However, if we insist on all vertices of $Q_{n,[k,l]}$ to be included in a cycle, then this can be achieved by allowing steps where instead of only a single bitflip, two bits are flipped.
This can be viewed as augmenting the underlying graph $Q_{n,[k,l]}$ by adding distance-2 edges.
In this case we may ask for a cyclic enumeration of all vertices of $Q_{n,[k,l]}$ that minimizes the number of these `cheating' distance-2 steps, i.e., for an enumeration that has only $\delta(Q_{n,[k,l]})$ many distance-2 steps. This is clearly the smallest possible number of distance-2 steps.
We call such an enumeration a \emph{tight enumeration}, see Figure~\ref{fig:cycles}~(c).
A tight enumeration can be seen as a travelling salesman tour of length $v(Q_{n,[k,l]})+\delta(Q_{n,[k,l]})$ through all vertices of $Q_{n,[k,l]}$, where distances are measured by Hamming distance, i.e., the number of bitflips.
We may ask in full generality:

\begin{question}
\label{quest:enum}
For which intervals $[k,l]$ is there a tight enumeration of the vertices of $Q_{n,[k,l]}$?
\end{question}

Observe that if both partition classes of $Q_{n,[k,l]}$ have the same size, i.e., $\delta(Q_{n,[k,l]})=0$, then a tight enumeration is a Hamilton cycle in this graph.
Note also that Question~\ref{quest:enum} is a sweeping generalization of the following well-known result regarding problem~(2) mentioned before, first proved in \cite{MR0349274}.

\begin{theorem}[\cite{MR0349274}]
\label{thm:comb}
For any $n\geq 2$ and $1\leq k\leq n-1$ there is a cyclic enumeration of all weight~$k$ bitstrings of length~$n$ such that any two consecutive bitstrings differ in exactly 2 bits.
\end{theorem}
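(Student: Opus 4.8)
The plan is to prove this by induction on $n$. Identifying each weight-$k$ bitstring with the $k$-subset of $[n]$ formed by the positions of its $1$s, and noting that two such bitstrings differ in exactly two bits precisely when the corresponding sets arise from each other by exchanging a single element, I will construct an explicit cyclic listing $A(n,k)$ of all $k$-subsets of $[n]$ in which consecutive sets (cyclically) differ by one exchange. Throughout I carry the strengthened invariant that the first set of $A(n,k)$ is $\{1,\dots,k\}$ and the last set is $\{1,\dots,k-1,n\}$; since these two sets differ by the exchange $k\leftrightarrow n$, the invariant makes the listing close up cyclically for free.

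For the base cases $k=1$ and $k=n-1$ I take $A(n,1):=(\{1\},\{2\},\dots,\{n\})$ and, for $k=n-1$, the list of complements in $[n]$ of the singletons $\{n\},\{1\},\{2\},\dots,\{n-1\}$ in that order; a direct check shows that each of these enumerates all sets of the required size, that consecutive sets (cyclically) differ by an exchange, and that the endpoints are as demanded by the invariant. For $2\le k\le n-2$ I define
\[
A(n,k):=A(n-1,k),\ \{n\}\cup\overline{A(n-1,k-1)},
\]
where $\overline{L}$ denotes the reversal of the list $L$ and $\{n\}\cup L$ denotes the list obtained by adjoining the element $n$ to every set of $L$. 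Since the sets in $A(n-1,k)$ are exactly the $k$-subsets of $[n]$ avoiding $n$ and those in $\{n\}\cup\overline{A(n-1,k-1)}$ are exactly the ones containing $n$, the concatenation lists every $k$-subset of $[n]$ exactly once (and $\binom{n-1}{k}+\binom{n-1}{k-1}=\binom{n}{k}$).

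It then remains to perform four short verifications, all immediate once the endpoints of the two blocks are supplied by the induction hypothesis: (i)~the invariant persists, because the first set of $A(n,k)$ is that of $A(n-1,k)$, namely $\{1,\dots,k\}$, and its last set is $\{n\}$ adjoined to the first set of $A(n-1,k-1)$, namely $\{1,\dots,k-1,n\}$; (ii)~consecutive sets inside the block $A(n-1,k)$ differ by an exchange by induction, and so do those inside $\{n\}\cup\overline{A(n-1,k-1)}$, since adjoining $n$ to both members of an exchanging pair leaves them exchanging; (iii)~at the junction, the last set of $A(n-1,k)$ is $\{1,\dots,k-1,n-1\}$ and the first set of the second block is $\{1,\dots,k-2,n-1,n\}$, which differ by the exchange $(k-1)\leftrightarrow n$; and (iv)~the cyclic wrap-around from $\{1,\dots,k-1,n\}$ back to $\{1,\dots,k\}$ is the exchange $n\leftrightarrow k$.

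The one genuinely delicate point — and what I expect to be the main obstacle — is \emph{pinning down the endpoints of the base cases} so that the junction step (iii) and the wrap-around step (iv) really are single exchanges: a careless ordering of the complements of the singletons for $k=n-1$ already violates the invariant, and one must likewise keep an eye on the degenerate situations in which one of the two blocks collapses to a single set or to a base case. Once the endpoints are under control, the induction runs through without further friction.
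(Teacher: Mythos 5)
Your proposal is correct and is essentially the paper's own argument: your list $A(n,k)$, with recursion $A(n,k)=A(n-1,k),\ \{n\}\cup\overline{A(n-1,k-1)}$ and endpoint invariant $f=\{1,\dots,k\}$, $\ell=\{1,\dots,k-1,n\}$, is exactly the sequence $\Gamma_{n,k}=(\Gamma_{n-1,k}\circ 0,\rev(\Gamma_{n-1,k-1})\circ 1)$ with $f(\Gamma_{n,k})=1^k0^{n-k}$ and $\ell(\Gamma_{n,k})=1^{k-1}0^{n-k}1$ from \eqref{eq:GCk} and \eqref{eq:GCkFL}, written in set notation. The induction that only the two block-boundary transitions need checking is precisely the proof of Lemma~\ref{lem:GCk}, and your endpoint bookkeeping there is accurate.
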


In fact, several of the subsequently mentioned results of this paper will be proved by extending the original approach from \cite{MR0349274} to prove Theorem~\ref{thm:comb}.

\subsection{Our results}

In this work we answer Question~\ref{quest:sat} and Question~\ref{quest:enum} for a large range of values $n$, $k$ and $l$.
The different ranges of parameters covered by our results are illustrated in Figure~\ref{fig:thms}.
Moreover, we provide optimal algorithms to compute the corresponding saturating cycles/tight enumerations.

\begin{figure}
\centering
\includegraphics[scale=0.916]{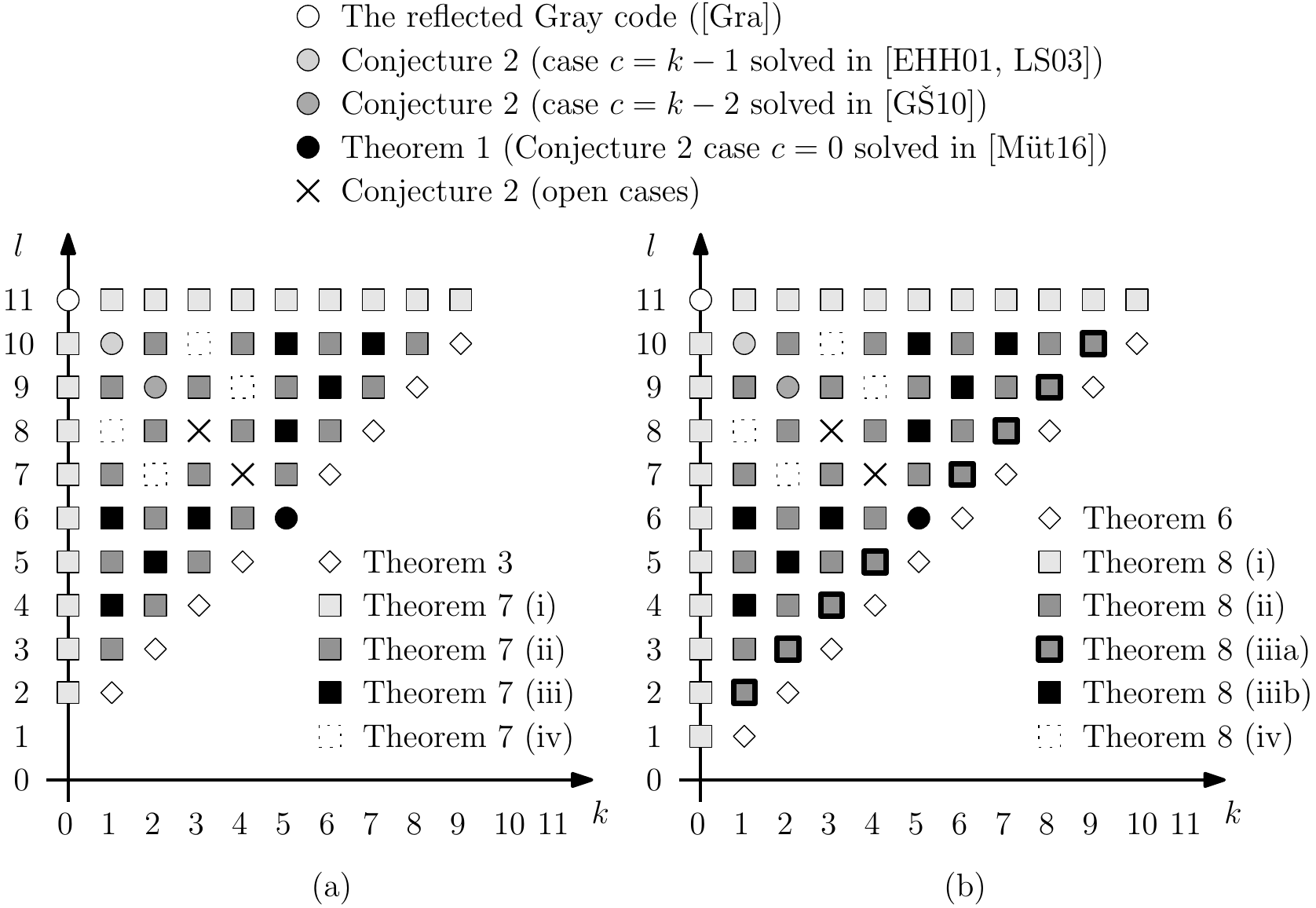} % scale down from 12pt to 11pt font size
\caption{The different cases of $k$ and $l$ covered by (a) Theorem~\ref{thm:sat} on saturating cycles and (b) Theorem~\ref{thm:enum} on tight enumerations in $Q_{n,[k,l]}$ for the case $n=11$.
A more extensive animation of the entire parameter space $(n,k,l)$ is available on the second author's website \cite{www}.}
\label{fig:thms}
\end{figure}

Our first set of results resolves Question~\ref{quest:sat} positively for all possible values of $k$ and $l$ except the cases covered by Conjecture~\ref{conj:gen-mlc}, see Figure~\ref{fig:thms}(a). Note that the case $l=k+1$ is already covered by Theorem~\ref{thm:sat2}.

\begin{theorem}
\label{thm:sat}
For any $n\geq 3$ the graph $Q_{n,[k,l]}$ has a saturating cycle in the following cases:
\begin{enumerate}
\item[(i)]  If\/ $0=k<l\leq n$ or $0\leq k<l=n$, and $l-k\geq 2$.
\item[(ii)] If\/ $1\leq k<l\leq n-1$ and $l-k\geq 2$ is even.
\item[(iii)] If\/ $1\leq k<l\leq \lceil n/2\rceil$ or $\lfloor n/2\rfloor \leq k<l\leq n-1$, and $l-k\geq 3$ is odd.
\item[(iv)] If\/ $1\leq k<l\leq n-1$ and $l-k\geq 3$ is odd, under the additional assumption that $Q_{2m+1,[m-c,m+1+c]}$, $c:=(l-k-1)/2$, has a Hamilton cycle for all $m=c,c+1,\ldots,(\min(k+l,2n-k-l)-1)/2$.
\end{enumerate}
\end{theorem}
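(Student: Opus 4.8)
The plan is to prove all four cases by a single inductive \emph{trim-and-glue} construction that reduces every interval $[k,l]$ to building blocks we already control: the width-two intervals $[j,j+1]$, on which Theorem~\ref{thm:sat2} furnishes a saturating cycle, and --- only in case~(iv) --- the balanced symmetric-levels graphs $Q_{2m+1,[m-c,m+1+c]}$, on which the hypothesis of~(iv) furnishes a Hamilton cycle. The elementary gluing move is the \emph{connecting square}: if a cycle $C$ uses an edge $ab$, a vertex-disjoint cycle $C'$ uses an edge $a'b'$, and $aa'$ and $bb'$ are also edges of the host graph, then deleting $ab,a'b'$ and inserting $aa',bb'$ merges $C$ and $C'$ into a single cycle on the combined vertex set (decreasing the component count by one). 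Such squares are plentiful between any two consecutive levels of the cube, so it pays to strengthen the statement proved by induction: on each piece we produce a saturating cycle that runs through a \emph{prescribed} edge whose endpoints have weight in a prescribed interior range, so that neighbouring pieces can always be stitched together.

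I would run the induction on the dimension $n$, checking small $n$ and small $l-k$ by hand. The inductive step splits $Q_{n,[k,l]}$ by the value of the last bit into a copy of $Q_{n-1,[k,l]}$, a copy of $Q_{n-1,[k-1,l-1]}$, and the perfect matching pairing $x0$ with $x1$ over all $x$ of weight in $[k,l-1]$. When $[k,l]$ touches neither level $0$ nor level $n$, both pieces have width $l-k$ and the induction hypothesis applies to each; prescribing on both a saturating cycle through the \emph{same} edge $xy$ of $Q_{n-1}$, with both endpoints of weight in $[k,l-1]$, makes $\{x0,x1,y0,y1\}$ a connecting square, and a single glue yields a saturating cycle of $Q_{n,[k,l]}$ --- this handles case~(ii). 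For case~(i) the complementation symmetry $j\mapsto n-j$ lets us assume $k=0$; now the bottom copy $Q_{n-1,[0,l-1]}$ degenerates as $l$ shrinks (for $l=2$ it is a star, with no cycle at all), so the recursion must be truncated near the bottom and the low-weight vertices incorporated by an ad hoc construction rather than by a recursively obtained cycle, the base being anchored by small instances such as $Q_{n,[0,2]}$. An essentially equivalent route for cases~(i)--(ii) is to extend the classical revolving-door construction underlying Theorem~\ref{thm:comb} two levels at a time, always keeping a spare edge free for the next merge.

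The substance is the parity bookkeeping that turns a long cycle into a \emph{saturating} one and guarantees the glued object is a single cycle omitting only larger-class vertices. Bipartition $Q_{n,[k,l]}$ by parity of weight; the two recursive copies sit in opposite parity conventions of $Q_n$, and one checks from the standing inequalities on $k,l,n$ that each piece's own larger class is the one $Q_n$ wants large, whence the omitted vertices of the two pieces together form exactly $\delta(Q_{n,[k,l]})$ vertices of the larger class. For \emph{even} width $l-k$ the pieces again have even width, the defect stays inside one parity class, and this closes unconditionally, giving cases~(i) and~(ii). For \emph{odd} width $l-k\geq 3$ the pieces have odd width too, and unwinding the recursion eventually produces a \emph{balanced} instance between two symmetric levels of some $Q_{2m+1}$, namely $Q_{2m+1,[m-c,m+1+c]}$ with $c=(l-k-1)/2$, on which a saturating cycle is a Hamilton cycle; tracking which $m$ occur yields precisely $m=c,c+1,\ldots,(\min(k+l,2n-k-l)-1)/2$, which is the hypothesis of case~(iv). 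Case~(iii) is the sub-range in which $[k,l]$ stays within one half of the cube ($1\le k<l\le\lceil n/2\rceil$, or its mirror): here the recursion can be steered to stay in that regime --- exploiting the monotonicity (and the normalized matching property) of the Boolean lattice away from the middle, together with Theorem~\ref{thm:sat2} and a revolving-door argument in the spirit of Theorem~\ref{thm:comb} --- so that a balanced symmetric-levels instance is never reached, and~(iii) therefore needs no extra hypothesis.

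The main obstacle is genuinely the odd-width case reaching across the middle. The delicate but routine part is calibrating the strengthened induction hypothesis --- a saturating cycle through a prescribed edge, with a prescribed set of omitted vertices --- so that it is at once strong enough to drive every gluing and still true at every base case, including the degenerate bottom end of case~(i) and the minimal-$n$, minimal-$(l-k)$ instances, which must be matched to the generic step one by one. But once $l-k$ is odd and $[k,l]$ straddles the middle, the reduction cannot avoid a graph $Q_{2m+1,[m-c,m+1+c]}$, whose Hamiltonicity is exactly Conjecture~\ref{conj:gen-mlc} and is known only for $c\in\{0,m-2,m-1,m\}$; this is the irreducible reason that case~(iv) must remain conditional.
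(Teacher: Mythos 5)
Your overall architecture is close to the paper's in spirit --- cases (i) and (ii) via an explicit two-level extension of the revolving-door/reflected Gray code construction (the paper's ``trimming'' of $\Gamma_n$, Theorem~\ref{thm:trim}), cases (iii) and (iv) by gluing saturating cycles from Theorem~\ref{thm:sat2} and, for (iv), a dimension induction that bottoms out at the balanced graphs $Q_{2m+1,[m-c,m+1+c]}$ with a strengthened hypothesis (the paper uses a ``virtual 2-path'' plus the requirement that all omitted vertices lie in the top level, rather than your prescribed connecting square, but these play the same role). However, there is a genuine gap in your treatment of case (iii).

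You claim that for $1\le k<l\le\lceil n/2\rceil$ the last-bit recursion $Q_{n,[k,l]}\to Q_{n-1,[k,l]}\cup Q_{n-1,[k-1,l-1]}$ ``can be steered to stay in that regime,'' so that no balanced symmetric-levels instance is ever reached. This is false at the boundary: take $n$ odd and $l=\lceil n/2\rceil=(n+1)/2$ (e.g.\ $n=11$, $k=1$, $l=6$). The top piece $Q_{n-1,[k,l]}$ then has $l=(n+1)/2>\lceil(n-1)/2\rceil$ and, since $l-k\ge 3$, also $k<\lfloor(n-1)/2\rfloor$; it straddles the middle of $Q_{n-1}$ and is exactly a case-(iv) instance, so continuing the recursion drives you toward Conjecture~\ref{conj:gen-mlc} --- the very thing case (iii) must avoid. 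The invocation of ``monotonicity and the normalized matching property'' does not supply an alternative move here. The paper sidesteps this entirely by proving (iii) with a \emph{different} decomposition: it stacks the $(l-k+1)/2$ saturating cycles $C_i$ of $Q_{n,[i,i+1]}$, $i=k,k+2,\ldots,l-1$, from Theorem~\ref{thm:sat2} and glues consecutive ones at the special vertices $a_i=0^{n-i}1^i$ and $b_i=0^{n-i-1}1^i0$ (after coordinate permutations placing a suitable 2-path, resp.\ 3-path, there). The hypothesis $l\le\lceil n/2\rceil$ is used only to guarantee that each $C_i$ covers its entire bottom level~$i$, so that all omitted vertices end up in levels $k+1,k+3,\ldots,l$ --- one parity class --- making the glued cycle saturating. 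You name the right building blocks but never describe this horizontal gluing; as written, your case (iii) does not close. (Secondarily, your primary route to cases (i)--(ii) leaves the degenerate bottom of case (i) as an unexamined ``ad hoc construction''; the paper's trimming of $\Gamma_n$, which you mention only as an equivalent alternative, is precisely what makes that case uniform.)
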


Our second set of results resolves Question~\ref{quest:enum} positively for all possible values of $k$ and $l$ except the cases covered by Conjecture~\ref{conj:gen-mlc}, see Figure~\ref{fig:thms}(b). Note that the case $l=k$ is already covered by Theorem~\ref{thm:comb}.

\begin{theorem}
\label{thm:enum}
For any $n\geq 3$ there is a tight enumeration of the vertices of $Q_{n,[k,l]}$ in the following cases:
\begin{enumerate}
\item[(i)]    If\/ $0=k<l\leq n$ or $0\leq k<l=n$.
\item[(ii)]   If\/ $1\leq k<l\leq n$ and $l-k\geq 2$ is even.
\item[(iiia)] If\/ $1\leq k<l\leq n-1$ and $l-k=1$.
\item[(iiib)] If\/ $1\leq k<l\leq \lceil n/2\rceil$ or $\lfloor n/2\rfloor \leq k<l\leq n-1$, and $l-k\geq 3$ is odd.
\item[(iv)]   If\/ $1\leq k<l\leq n-1$ and $l-k\geq 3$ is odd, under the additional assumption that $Q_{2m+1,[m-c,m+1+c]}$, $c:=(l-k-1)/2$, has a Hamilton cycle for all $m=c,c+1,\ldots,(\min(k+l,2n-k-l)-1)/2$.
\end{enumerate}
\end{theorem}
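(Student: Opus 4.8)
The plan is to first turn Question~\ref{quest:enum} into an ordinary Hamilton cycle problem, then to build the required cycle recursively by splitting off one coordinate, feeding off Theorems~\ref{thm:sat} and~\ref{thm:sat2} and a few explicit base cases.

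\emph{Step 1: Reformulation.} Let $A$ and $B$ be the larger and the smaller bipartite class of $Q_{n,[k,l]}$, and let $H=H_{n,[k,l]}$ be obtained from $Q_{n,[k,l]}$ by adding every edge between two vertices of $A$ at Hamming distance exactly~$2$. A short counting argument shows that \emph{the tight enumerations of $Q_{n,[k,l]}$ are exactly the Hamilton cycles of $H_{n,[k,l]}$}: every ordinary cube edge joins the two parity classes, and $B$ is still independent among the added distance-$2$ edges, so in any Hamilton cycle of $H$ each of the $|B|$ vertices of $B$ is incident with two ordinary edges, giving $2|B|$ ordinary edges and hence exactly $|A|-|B|=\delta(Q_{n,[k,l]})$ distance-$2$ edges, none of which joins two vertices of $B$ -- i.e.\ it is a tight enumeration; conversely in any cyclic $2$-colouring of $|A|$ $A$'s and $|B|$ $B$'s the number of $AA$-adjacencies minus $BB$-adjacencies equals $\delta$, so tightness forces no $BB$-steps and exactly $\delta$ distance-$2$ steps, all inside $A$. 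So it suffices to exhibit a Hamilton cycle of $H_{n,[k,l]}$ in each case (i)--(iv).

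\emph{Step 2: The recursion for even gap and for $k=0$ or $l=n$.} Split $Q_{n,[k,l]}$ by coordinate~$n$: the last-bit-$0$ vertices induce a copy of $Q_{n-1,[k,l]}$, the last-bit-$1$ vertices a copy of $Q_{n-1,[k-1,l-1]}$, and the two parts are joined by the perfect matching $\{x0,x1\}$, $x\in Q_{n-1,[k,l-1]}$, on the smaller part. When $l-k$ is even (and similarly throughout case~(i)) flipping the last bit preserves the ``level-$k$ parity'' that defines $A$, so the distance-$2$ edges of $H_{n,[k,l]}$ inside the two parts are precisely those added when forming $H_{n-1,[k,l]}$ and $H_{n-1,[k-1,l-1]}$. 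Hence, by induction, it remains to glue a Hamilton cycle of $H_{n-1,[k,l]}$ to one of $H_{n-1,[k-1,l-1]}$ along two matching edges $\{x0,x1\},\{y0,y1\}$, which succeeds as soon as the first cycle uses the edge $x0y0$ and the second uses $x1y1$ for some $x,y\in Q_{n-1,[k,l-1]}$. One therefore strengthens the induction hypothesis to guarantee that the constructed Hamilton cycle contains a suitable ``connector'' edge (and, in the degenerate low-dimensional instances, enough edges to re-route through the few omitted $A$-vertices). The ``in'' branch lowers $k$ to $0$, landing in case~(i); case~(i) recurses within $k=0$ down to $Q_{n,[0,n]}=Q_n$ (reflected Gray code), $Q_{n,[0,1]}$ (a star, where $H_{n,[0,1]}$ is a wheel-like graph handled explicitly), and a finite list of small instances.

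\emph{Step 3: Odd gap.} For $l-k=1$ (case~(iiia)) start instead from the cycle through the smaller bipartite class of $Q_{n,[k,k+1]}$ given by Theorem~\ref{thm:sat2} and re-insert the $\delta$ omitted vertices; for $l-k\ge 3$ odd, reduce -- by an induction on~$n$ in the spirit of the proof of Theorem~\ref{thm:sat2}, which peels off the outermost levels -- to instances with smaller $\min(k+l,2n-k-l)$. In the parameter window of case~(iiib) this induction stays strictly on one side of the middle and terminates at base cases equivalent by complementation to case~(i); in the window of case~(iv) it inevitably reaches the balanced graphs $Q_{2m+1,[m-c,m+1+c]}$ with $c=(l-k-1)/2$, whose Hamiltonicity is exactly Conjecture~\ref{conj:gen-mlc}, which is why that case is stated conditionally (and why the auxiliary range of $m$ in the hypothesis is precisely the set of such instances met along the way). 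Finally, Step~1 converts every Hamilton cycle of $H_{n,[k,l]}$ back into a tight enumeration, and since the whole construction is a bounded number of split/glue/re-route operations it can be executed in amortized constant time per generated set with $\mathcal{O}(n)$ space.

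\emph{The main obstacle.} The hard part is the invariant bookkeeping of Step~2--3: one must find an induction hypothesis strong enough to promise, for \emph{every} instance produced by the recursion, a Hamilton cycle of $H_{n,[k,l]}$ containing all connector edges required by the parent call (and by the re-routing in degenerate cases), yet weak enough to survive every split-and-glue. Making these invariants propagate consistently across the different parity cases, across the $k=0$ and $l=n$ boundary, and through the switch to Theorem~\ref{thm:sat2} in the odd-gap case, is where essentially all of the effort lies.
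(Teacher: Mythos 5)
Your Step~1 reformulation (tight enumerations are exactly the Hamilton cycles of the graph $H_{n,[k,l]}$ obtained by adding the distance-$2$ edges inside the larger parity class) is correct, and the split-off-one-coordinate-and-glue shape of Steps~2--3 is close in spirit to what the paper does for the odd-gap cases. But the proposal stops exactly where the proof has to start: you yourself flag that the entire difficulty is to formulate an induction hypothesis guaranteeing the ``connector'' edges, and you never state that hypothesis, let alone verify that it survives the recursion. The paper resolves this concretely: for $l-k=1$ it proves the strengthened Theorem~\ref{thm:enum2}, asserting that the tight enumeration of $Q_{n,[k,k+1]}$ contains a specific \emph{3-path} $(u,v,w,x)$ alternating between the two levels and, below the middle, a \emph{switched 2-path} $(u,v,w)$ with $(w,u,v)$ a path in the cube; these two configurations are precisely the connectors needed both for the dimension-splitting induction and for the level-by-level gluing in case~(iiib) and the induction in case~(iv), and their propagation is checked explicitly. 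Without naming and propagating such invariants, Steps~2--3 remain a plan rather than a proof.

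Two of your concrete suggestions would moreover fail or need substantial repair as stated. For case~(iiia) you propose to take the saturating cycle of Theorem~\ref{thm:sat2} and ``re-insert the $\delta$ omitted vertices''; but, for example, in $Q_{10,[1,2]}$ the saturating cycle has only $20$ edges while $\delta=\binom{10}{2}-\binom{10}{1}=35$ vertices must be inserted, so one cannot place one omitted vertex per cycle edge --- one would have to chain several omitted level-$(k+1)$ vertices along consecutive distance-$2$ steps, and arranging these chains is a nontrivial decomposition problem you do not address (the paper avoids it entirely by constructing the tight enumeration inductively rather than by repairing the saturating cycle). For cases~(i) and~(ii) the paper also does not recurse at all: it trims the reflected Gray code $\Gamma_n$, replacing every excursion below level~$k$ or above level~$l$ by a single distance-$2$ step, which is legitimate because consecutive vertices of $\Gamma_{n,k}$ are at Hamming distance~$2$ (Lemma~\ref{lem:GCk}) and because for even $l-k$ all these steps land in one parity class; this explicit construction is also what underlies the loopless algorithm of Theorem~\ref{thm:algo}(a), whereas your recursive route for (i)--(ii) would again require unproven connector invariants and the ``finite list of small instances'' you only gesture at.
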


Note that the last part~(iv) of Theorems~\ref{thm:sat} and \ref{thm:enum} is conditional on the validity of Conjecture~\ref{conj:gen-mlc}.
In fact, recall that by the partial results on Conjecture~\ref{conj:gen-mlc}) \cite{MR1887372, locke-stong:03,MR2609124} we know that the additional assumption in (iv) is satisfied for $m\in\{c,c+1,c+2\}$, so the statement could be slightly strengthened.

The tight enumerations we construct to prove Theorem~\ref{thm:enum} have the additional property that all distance-2 steps are within a single level, and thus never between two different levels $k$ and $k+2$.
In terms of sets, these steps therefore correspond to exchanging a single element.

Given the results from the previous two theorems, we believe that the general answer to Questions~\ref{quest:sat} and \ref{quest:enum} is positive for all possible values of $n$, $k$ and $l$, and we do not know of any counterexamples.

For all the unconditional results in Theorems~\ref{thm:sat} and \ref{thm:enum} we provide corresponding optimal generation algorithms.

\begin{theorem}
\label{thm:algo}
(a) For any interval $[k,l]$ as in case~(i) or (ii) of Theorems~\ref{thm:sat} and \ref{thm:enum}, respectively, there is a corresponding loopless algorithm that generates each bitstring of a saturating cycle or a tight enumeration of the vertices of $Q_{n,[k,l]}$ in $\cO(1)$ time. \\
(b) For any interval $[k,l]$ as in case~(iii) of Theorems~\ref{thm:sat} and \ref{thm:enum}, respectively, there is a corresponding algorithm that generates each bitstring of a saturating cycle or a tight enumeration of the vertices of $Q_{n,[k,l]}$ in $\cO(1)$ time on average.
\end{theorem}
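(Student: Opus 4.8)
The plan is to derive the generation algorithms directly from the constructive proofs of Theorems~\ref{thm:sat} and~\ref{thm:enum}, so the first task is to make those constructions sufficiently explicit and recursive. For cases~(i) and (ii) I would proceed as follows. Case~(i) with $k=0$ (or symmetrically $l=n$) reduces, after conditioning on the first bit, to recursively generating a structure on $Q_{n-1,[0,l]}$ and $Q_{n-1,[0,l-1]}$ and splicing the two lists together at a common vertex; the base case is the reflected Gray code, for which a loopless algorithm is classical~\cite{MR0366085,MR0424386}. The key point is that the splicing introduces only $\cO(1)$ bookkeeping per recursion level and the recursion depth is $\cO(n)$, so a single step of the final code touches only $\cO(1)$ of the $\cO(n)$ state variables. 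For case~(ii), where $l-k\ge 2$ is even, the construction builds the saturating cycle / tight enumeration from the $k=0$ building blocks (case~(i)) by a product-type argument across pairs of consecutive levels: one partitions $Q_{n,[k,l]}$ into blocks indexed by the weight restricted to a fixed coordinate subset, traverses each block by an already-constructed code, and joins consecutive blocks by a single flip. Again each transition between blocks is local, and the total per-step work is bounded by a constant provided the ``which block am I in / am I at the end of the current block'' test is maintained incrementally. I would encode the full state as: the current bitstring (in an array), a constant number of pointers/flags per recursion level, and a stack of the $\cO(n)$ active recursive calls; the invariant to prove by induction on $n$ is that advancing the state costs $\cO(1)$ amortized, and then strengthen this to worst-case $\cO(1)$ by the standard trick of ``looking ahead one step'' and precomputing the next flip while outputting the current bitstring. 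This yields part~(a).

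For part~(b), the case~(iii) constructions are built on top of the middle-levels Hamilton cycle from Theorem~\ref{thm:mlc}, whose known loopless algorithm runs in $\cO(1)$ time on average but not worst-case (the amortization is inherent to that construction). I would show that the reduction in case~(iii) from $Q_{n,[k,l]}$ with $l-k\ge 3$ odd down to the symmetric ``middle'' instances $Q_{2m+1,[m-c,m+1+c]}$ with $c=(l-k-1)/2$, combined with the gluing operations used for the even case, preserves the ``$\cO(1)$ on average'' guarantee: each distance-2 or distance-1 transition added by the gluing is $\cO(1)$ worst-case, the recursive framing contributes $\cO(n)$ additive but $\cO(1)$ amortized work per step, and the only amortized (rather than worst-case) ingredient is the black-box middle-levels generator. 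Since there are only $\cO(n)$ levels of recursion and a total of $\Theta(v(Q_{n,[k,l]}))$ output steps, summing the amortized bounds gives $\cO(1)$ time on average overall. The $\cO(n)$ space bound is immediate in both parts: the bitstring array is length $n$, and the recursion stack has depth $\cO(n)$ with $\cO(1)$ data per frame.

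The main obstacle I anticipate is \emph{converting the amortized bound into a worst-case $\cO(1)$ bound for part~(a)}. In a naive implementation of the recursive splice-and-glue scheme, a single step of the top-level code can occasionally require ``unwinding'' many recursion levels at once — for instance, when one finishes traversing a deeply nested block and must pop back up several frames and set up the next block. Making this worst-case constant requires a careful \emph{eager} maintenance of the state: at every step one precomputes the coordinate to be flipped next and keeps, at each recursion level, a pointer to the next pending action, so that popping out of $t$ nested blocks is replaced by $t$ individual $\cO(1)$ pops spread over the preceding $t$ steps. This is essentially the technique pioneered by Ehrlich~\cite{MR0366085} and refined in~\cite{MR0424386,MR995888} for loopless combination and subset generators, and the bulk of the proof of part~(a) will be in checking that it goes through uniformly for every gluing operation used in cases~(i) and~(ii). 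For part~(b) this obstacle disappears because an $\cO(1)$-on-average guarantee tolerates exactly such occasional deep unwindings, so there the proof is a more routine bookkeeping argument, and the remaining care is only to ensure that we never invoke the middle-levels generator more than a constant number of times per output bitstring.
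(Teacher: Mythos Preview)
Your approach to part~(a) is genuinely different from the paper's, and the paper's route sidesteps precisely the obstacle you flag. You propose a recursive splice-and-glue scheme (induction on $n$, conditioning on one coordinate, then de-amortizing via Ehrlich-style look-ahead). The paper does none of this. Cases~(i) and~(ii) of Theorems~\ref{thm:sat} and~\ref{thm:enum} are proved by \emph{trimming} the reflected Gray code $\Gamma_n$ to levels $[k,l]$: one follows $\Gamma_n$ verbatim while inside levels $[k+1,l-1]$, and at the boundary levels replaces each excursion above~$l-1$ (resp.\ below~$k+1$) by the short path through $\up(x,s_{\Gamma_{n,l-1}}(x))$ (resp.\ $\down(x,s_{\Gamma_{n,k+1}}(x))$). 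The algorithm is then a direct successor-rule machine: the successor in $\Gamma_n$ is given by the classical parity rule~\eqref{eq:GCsuc}, and the successor in $\Gamma_{n,k}$ at the boundary is given by the explicit Tang--Liu formulas (Lemmas~\ref{lem:upexp} and~\ref{lem:downexp}). To evaluate these in $\cO(1)$ worst-case time the paper maintains two auxiliary arrays: $(p_1,\ldots,p_c)$ recording the positions of the 1-bits, and $(\nu_1,\ldots,\nu_c)$ recording the block structure of runs of 1s, each updated in $\cO(1)$ per step. There is no recursion tree, no stack, and hence no ``deep unwinding'' to amortize away; the loopless guarantee is immediate from the explicit formulas. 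Your recursive plan might be made to work, but the de-amortization you anticipate is a real technical burden that the paper simply avoids.

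For part~(b) your description of the reduction is off. You write that case~(iii) reduces $Q_{n,[k,l]}$ to the symmetric instances $Q_{2m+1,[m-c,m+1+c]}$ with $c=(l-k-1)/2$; that is the reduction in case~(iv), which is conditional on Conjecture~\ref{conj:gen-mlc} and is \emph{not} covered by Theorem~\ref{thm:algo}. In case~(iii) the levels lie entirely below or above the middle, and the construction glues together $(l-k+1)/2$ saturating cycles in consecutive \emph{two-level} graphs $Q_{n,[i,i+1]}$ from Theorem~\ref{thm:sat2}. Each such two-level cycle is computed by an algorithm $\SatCycle(n,i)$ that implements the inductive proof of Theorem~\ref{thm:sat2} via an explicit stack of $\rec$- and $\flip$-items; the base case of that induction is $n=2i+1$, where one calls the middle-levels generator $\HamCycle$ with $c=0$, not $c=(l-k-1)/2$. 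The $\cO(1)$-on-average bound then follows from an injection from $\rec$-items to $\flip$-items on the stack (so non-visiting iterations are at most half of all iterations) together with the known average-case bound for $\HamCycle$; the stack height, and hence the space, is $\cO(n)$ by a simple monotonicity argument on the $n'$-parameters of the stacked $\rec$-items.
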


It should be noted that the algorithms for part~(a) of Theorem~\ref{thm:algo} are considerably simpler than those for part~(b).
The reason is that the underlying constructions are entirely different.
In particular, for part~(b) we repeatedly call the (average) constant-time algorithm to compute a Hamilton cycle in $Q_{2m+1,[m,m+1]}$, $m\leq \lfloor (n-1)/2\rfloor$, an algorithmic version of Theorem~\ref{thm:mlc}, presented in \cite{DBLP:conf/esa/MutzeN15, DBLP:conf/soda/MutzeN17}, and this algorithm is admittedly rather complex.
The initialization time of our algorithms is $\cO(n)$, and the required space is $\cO(n)$.

We implemented all these algorithms in C++, and we invite the reader to experiment with this code, which can be found on our website \cite{www}.

In view of these results, the only remaining, and therefore even more interesting, open case is the question whether the cube of odd dimension has a Hamilton cycle between any two symmetric levels, i.e., Conjecture~\ref{conj:gen-mlc}. These open cases are represented by crosses in Figure~\ref{fig:thms}.
Given the results from \cite{MR2609124} and \cite{MR3483129}, the next natural step towards resolving this conjecture would be to investigate whether the graphs $Q_{2k+1,[3,2k-2]}$ or $Q_{2k+1,[k-1,k+2]}$ have a Hamilton cycle for all $k\geq 1$.

In this paper we provide the following partial result towards the general Conjecture~\ref{conj:gen-mlc}:
We prove the existence of long cycles in the graph $Q_{2k+1,[k-c,k+1+c]}$, $c\in\{0,1,\ldots,k\}$.
This approximate version of the conjecture is similar in spirit to the line of work \cite{MR1275228,MR1350586,MR1329390,MR2046083} that preceded the proof of Theorem~\ref{thm:mlc}.

\begin{theorem}
\label{thm:long}
For any $k\geq 1$ and $c\in\{0,1,\ldots,k\}$, the graph $Q_{2k+1,[k-c,k+1+c]}$ has a cycle that visits at least a $(1-\epsilon)$-fraction of all vertices, where $\epsilon:=\frac{1}{2(c+1)}\min\big(1,\exp(\frac{(c+1)^2}{k-c})-1\big)$.
In particular, for any $c$ and $k\rightarrow\infty$, the cycle visits a $(1-o(1))$-fraction of all vertices.
\end{theorem}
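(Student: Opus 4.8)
The plan is to assemble the long cycle from small, well-understood pieces. Partition the $2c+2$ consecutive levels $k-c,k-c+1,\dots,k+1+c$ into consecutive \emph{bands}, most of which consist of two levels, and inside each band produce a cycle missing only the gap between the two bipartite classes of that band. The central band $[k,k+1]$, whenever the tiling can be chosen to contain it, is handled by the middle levels theorem (Theorem~\ref{thm:mlc}): it yields a genuine Hamilton cycle and loses nothing. A two-level band $[j,j+1]$ away from the centre is handled by Theorem~\ref{thm:sat2}, whose cycle misses exactly $\delta(Q_{2k+1,[j,j+1]})=\big|\binom{2k+1}{j}-\binom{2k+1}{j+1}\big|$ vertices. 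When $c$ is odd, parity forces one three-level band on each side (never adjacent to the centre); such a band $[j,j+2]$ has even width $l-k=2$ and is covered by case~(ii) of Theorem~\ref{thm:sat}. Finally, for $c\in\{k,k-1,k-2\}$ one skips the decomposition and quotes the known cases of Conjecture~\ref{conj:gen-mlc}, obtaining a Hamilton cycle and hence $\epsilon=0$.

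The next step is to weave the band cycles into one cycle. List the bands $B_0,\dots,B_t$ from the bottom level upward; consecutive bands $B_i,B_{i+1}$ meet only along one \emph{interface}, the unique pair of adjacent levels of $Q_{2k+1}$ contained in $B_i\cup B_{i+1}$. The final cycle will ascend through the bands along one strand, reverse direction inside the top band $B_t$, and descend along a second strand, closing up inside the bottom band $B_0$. Accordingly each interior band is split into two vertex-disjoint subpaths (delete two edges of its cycle), each extreme band into a single subpath (delete a vertex, so that both path ends lie on the interior-facing level of that band), and consecutive subpaths are joined across the interfaces. Each join deletes a bounded number of vertices, so the joins together cost only $O(c)$ vertices, negligible against the error budget.

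It remains to count the omitted vertices. Apart from the $O(c)$ from the joins, exactly $\sum_i\delta(B_i)$ vertices are lost. Put $\beta:=\binom{2k+1}{k}$ and $r_i:=\binom{2k+1}{k-i}/\beta=\binom{2k+1}{k+1+i}/\beta$, so that $1=r_0\ge r_1\ge\cdots$ and $v(Q_{2k+1,[k-c,k+1+c]})=2\beta\sum_{i=0}^{c}r_i\ge 2(c+1)\binom{2k+1}{k-c}$; this last inequality already shows that losing at most one extreme level costs at most a $\tfrac1{2(c+1)}$ fraction, the trivial cap in the statement. A telescoping estimate for $\sum_i\delta(B_i)$, combined with $-\ln r_c=\sum_{i=1}^{c}\ln\frac{k+i+1}{k-i+1}\le\sum_{i=1}^{c}\frac{2i}{k-c}=\frac{c(c+1)}{k-c}$, hence $r_c\ge\exp(-\tfrac{c(c+1)}{k-c})\ge\exp(-\tfrac{(c+1)^2}{k-c})$, then accounts for the refined factor $\exp(\tfrac{(c+1)^2}{k-c})-1$ in $\epsilon$. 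Letting $k\to\infty$ with $c$ fixed sends both ingredients of $\epsilon$ to $0$.

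The main obstacle is the weaving, since Theorems~\ref{thm:mlc} and \ref{thm:sat2} are available only as existence statements: a priori nothing is known about which edges a band cycle uses, nor how its vertices on an interface level sit relative to the next band's cycle. I would circumvent this in one of two ways. Either strengthen the inputs to produce a Hamilton, respectively saturating, cycle through a prescribed edge (or short path) on each interface --- a flexibility present in essentially all known cube-Hamiltonicity constructions, including those behind \cite{MR3483129} and \cite{MR3759914} --- so that the connection pattern can be fixed first and the band cycles built to match it; or argue by averaging, using that each interface is a dense bipartite graph between two full levels of $Q_{2k+1}$ and that each band cycle meets a constant fraction of it, so that mutually compatible gluing sites are abundant along the whole chain $B_0,\dots,B_t$. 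A secondary but genuine difficulty is the bookkeeping for wide slabs, where a crude band-by-band estimate overshoots the stated $\epsilon$ and one must interleave the decomposition with the known cases of Conjecture~\ref{conj:gen-mlc} and sharp binomial-coefficient asymptotics to land exactly on the claimed bound.
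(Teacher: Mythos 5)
Your second construction --- gluing two-level saturating cycles from Theorem~\ref{thm:sat2} across the slab, with the middle pair handled by Theorem~\ref{thm:mlc} --- is essentially the paper's route to the factor $\exp(\frac{(c+1)^2}{k-c})-1$, and your estimate $r_c\geq\exp(-\frac{c(c+1)}{k-c})$ is the right computation. The ``main obstacle'' you flag, however, is not an obstacle: you do not need to strengthen Theorems~\ref{thm:mlc} and \ref{thm:sat2} to prescribe edges, nor an averaging argument. Since any permutation of coordinates is an automorphism of $Q_{n,[k,l]}$, and since every cycle in two consecutive levels that visits all of the smaller level contains a $3$-path (respectively, a missed top-level vertex all of whose lower neighbours are visited), one can permute each band's cycle \emph{independently} so that it passes through the designated vertices $a_i,b_i$ of \eqref{eq:ai-bi} in the required order, and then swap two edges per interface. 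This is exactly the mechanism in the proof of Theorem~\ref{thm:sat}~(iii), and it costs \emph{zero} additional vertices (your $O(c)$ loss per join is avoidable and, in fact, a per-join loss is not obviously $O(1)$ without such a device).

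The genuine gap is the first term of the minimum. Your band decomposition loses $\sum_i\delta(B_i)$, which telescopes to roughly $2\binom{2k}{k}$ --- one \emph{middle} level's worth of vertices --- whereas the cap $\frac{1}{2(c+1)}$ requires losing only about one \emph{extreme} level's worth, $\binom{2k+1}{k+c+1}$. For $c$ in the wide range where $\exp(\frac{(c+1)^2}{k-c})-1>1$ but $c\notin\{k,k-1,k-2\}$ (e.g.\ $c$ of order $\sqrt{k}$ up to $k-3$), neither your decomposition nor the known cases of Conjecture~\ref{conj:gen-mlc} delivers the claimed $\epsilon=\frac{1}{2(c+1)}$, and no amount of binomial bookkeeping on the band sums will fix this, because the loss $\approx 2\binom{2k}{k}$ is intrinsic to stacking two-level cycles. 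The paper instead uses a \emph{second, separate construction} for this term: trimming the reflected Gray code to all $2c+2$ levels at once (Theorem~\ref{thm:trim}), which by \eqref{eq:mb1} and \eqref{eq:udnk} misses exactly $2\binom{2k}{k+c+1}$ vertices, all in the two extreme levels, whence $m/v\leq\frac{1}{2(c+1)}$ as in \eqref{eq:mv1} for \emph{every} $c$. You need this (or an equivalent construction whose losses are confined to the outermost levels) to complete the proof; the rest of your plan then yields the other term of the minimum.
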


\subsection{Related work}

In \cite{MR0366085} an algorithm is presented that generates the vertices of $Q_{n,[k,l]}$ for an arbitrary interval $[k,l]$ such that any two consecutive vertices have Hamming distance 1 or 2, where the value 2 appears only between vertices on level~$k$ and $l$, but the Hamming distance between the first and last vertex is arbitrary, possibly $n$.
The running time of that algorithm is $\cO(n)$ per generated vertex.
In addition, this paper presents a loopless algorithm, achieving $\cO(1)$ time per generated vertex, to generate all vertices in $Q_{n,[k,l]}$ level by level, using only distance-2 steps in each level.
In particular, these enumerations are not cycles in $Q_{n,[k,l]}$, and they are not tight.

In \cite{MR3193758} the authors present algorithms for enumerating all vertices of $Q_{n,[k,l]}$ for an arbitrary interval $[k,l]$ such that any two consecutive bitstrings have Levenshtein distance at most 2 and Hamming distance at most 4.
The Levenshtein distance is the minimum number of bit insertions, deletions, and bitflips necessary to transform one bitstring into another.
Again, these enumerations are not cycles in $Q_{n,[k,l]}$ and they are not tight.
However, the corresponding generation algorithms are loopless, so they are very simple and fast.
Improving on this, as a byproduct of the results mentioned in the previous section we obtain a simple loopless algorithm to generate all vertices of $Q_{n,[k,l]}$ for an arbitrary interval $[k,l]$ such that any two consecutive bitstrings have Hamming distance and Levenshtein distance at most 2.

\subsection{Outline of this paper}

In Sections~\ref{sec:sat} and \ref{sec:enum} we present the proofs of Theorems~\ref{thm:sat} and \ref{thm:enum}, respectively.
In Section~\ref{sec:algo} we provide the corresponding optimal generation algorithms, proving Theorem~\ref{thm:algo}.
Finally, in Section~\ref{sec:long} we prove Theorem~\ref{thm:long}.

\section{Saturating cycles}
\label{sec:sat}

\subsection{Trimming Gray codes and proofs of Theorem~\ref{thm:sat}~(i)+(ii)}
\label{sec:trimming}

In this section we prove cases~(i) and (ii) from Theorem~\ref{thm:sat} by showing that the standard reflected Gray code in $Q_n$ mentioned in the introduction, see \cite{gray:patent} and \cite[Section~7.2.1.1]{MR3444818}, can be `trimmed' to any number of consecutive levels of $Q_n$ so that it visits all these vertices except possibly some vertices from the first and last levels.
This technique is a generalization of the approach presented in \cite{MR0349274} to prove Theorem~\ref{thm:comb}, and it yields the following result:

\begin{theorem}
\label{thm:trim}
For any $n\geq 3$ and $k,l$ with $0\leq k<l\leq n$ and $l-k\geq 2$, the graph $Q_{n,[k,l]}$ has a cycle that visits all vertices except possibly some vertices from levels~$k$ and $l$.
\end{theorem}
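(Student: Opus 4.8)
The plan is to start from the standard reflected Gray code on $Q_n$, which is a Hamilton cycle $C$ visiting all $2^n$ vertices, and to perform local surgeries that excise the portions of $C$ living on levels below $k$ or above $l$, while keeping the result a single cycle through all of $Q_{n,[k,l]}$ minus possibly some vertices of the two extreme levels $k$ and $l$. The natural approach, generalizing the technique from \cite{MR0349274} that proves Theorem~\ref{thm:comb} (the case $k=l-2$, where the trimmed cycle lives on a single level after contracting the edges through level $l-1=k+1$), is to work one level at a time from the outside in. Concretely, first trim away everything strictly below level $k$, obtaining a cycle on $Q_{n,[k,n]}$ that misses only some vertices of level $k$; then symmetrically trim away everything strictly above level $l$; the two operations act on disjoint parts of the cube and commute.

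To trim below level $k$: consider the edges of $C$ that go between level $k-1$ and level $k$. These come in a pattern determined by the recursive structure of the reflected Gray code. The reflected Gray code $R_n$ is built as $R_{n-1}\cdot 0$ followed by $\mathrm{rev}(R_{n-1})\cdot 1$, which gives a clean recursive handle on which vertices of each level are entered/left and in what order. The key structural fact I would establish is that the sub-walks of $C$ that dip into levels $[0,k-1]$ and come back up to level $k$ can be rerouted: each maximal excursion of $C$ into the low levels starts at some vertex $u$ on level $k$, goes down, wanders, and returns to some vertex $u'$ on level $k$, and one shows that these excursions can be short-circuited (replace the excursion by the single edge of $Q_n$ from $u$ to... — no, $u$ and $u'$ need not be adjacent) — so instead one pairs up consecutive level-$k$ endpoints and splices, using that the segments of $C$ that stay within levels $[k,n]$ together with suitable re-connections still form one cycle. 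This is exactly the contraction/short-cutting idea from \cite{MR0349274}: contract all vertices and edges below level $k$, check that the contracted multigraph on level $\geq k$ is still connected and Eulerian-friendly enough that a single cycle survives, and argue that the only vertices possibly lost are on level $k$ itself (those level-$k$ vertices all of whose incident $C$-edges went downward).

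The main obstacle I expect is bookkeeping: verifying that after contracting the low part the result is genuinely a single cycle rather than a disjoint union of several cycles, and that no vertex of level $>k$ is accidentally dropped. This is where the precise recursive description of the reflected Gray code has to be exploited — one needs to know, level by level, the cyclic order in which $C$ visits the vertices of level $k$ and which pairs of consecutive level-$k$ visits are joined by a descending excursion versus a segment staying at height $\geq k$. I would handle this by induction on $n$ (or on $l-k$, or on the number of levels to be trimmed), with the reflected-Gray-code recursion $R_n = R_{n-1}0,\ \mathrm{rev}(R_{n-1})1$ driving the inductive step and the base cases being small cubes checked directly. The hypothesis $l-k\ge 2$ and $n\ge 3$ guarantees there is at least one full interior level (or the degenerate single-level-after-contraction situation of Theorem~\ref{thm:comb}) so the surgery has room to produce a genuine cycle rather than something trivial; the symmetric trimming from above is identical by the weight-complementation symmetry $x\mapsto \bar x$ of $Q_n$, which maps the reflected Gray code to (a shift of) itself and swaps the roles of levels $k$ and $n-k$.
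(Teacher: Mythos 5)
Your overall strategy---local surgery on the excursions of the reflected Gray code, handling the bottom and top trims independently---is the right one and matches the paper's, but the concrete mechanism of the surgery, which is the entire substance of the proof, is missing, and the fallback you sketch does not work. You correctly observe that a maximal excursion of $\Gamma_n$ below level~$k$ starts and ends at level-$k$ vertices $u,u'$ that need not be adjacent; however, both of your proposed remedies fail. Splicing $u$ to $u'$ inside $Q_{n,[k,l]}$ would require an intermediate vertex, which (level $k-1$ being forbidden) could only lie on level~$k+1$; but every level-$(k+1)$ vertex must be visited by the retained part of the cycle (the theorem allows omissions only on levels $k$ and $l$), so no such splice is available without revisiting a vertex. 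And ``contract everything below level~$k$ and check that the contracted multigraph is connected and Eulerian-friendly'' does not produce a cycle in the induced subgraph $Q_{n,[k,l]}$ at all: an Euler-type closed walk in a contracted multigraph revisits vertices and uses connections that are not edges of $Q_{n,[k,l]}$.

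The missing idea is to perform the surgery one level higher. Each excursion below level~$k+1$ runs from a downward vertex $x$ of level~$k+1$ to $y:=s_{\Gamma_{n,k+1}}(x)$, and by the Tang--Liu property (Lemma~\ref{lem:GCk}) we have $d(x,y)=2$, so $x$ and $y$ have a unique common neighbor $\down(x,y)$ in level~$k$; one replaces the excursion by the $2$-path $(x,\down(x,y),y)$, which has the same endpoints and orientation and therefore automatically preserves the single-cycle structure---no connectivity or Eulerian argument is needed. The level-$k$ vertices retained are then exactly those of the form $\down(x,y)$, which is why only level~$k$ (and symmetrically level~$l$, via $\up$) may lose vertices. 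The real work is showing that these common neighbors are pairwise distinct, i.e., that $\down(\Gamma_{n,k+1})$, suitably rotated, is a subsequence of $\Gamma_{n,k}$ (Lemma~\ref{lem:GCksub}); this requires a careful induction on the recursion $\Gamma_{n+1}=(\Gamma_n\circ 0,\rev(\Gamma_n)\circ 1)$, tracking how the successors of the last vertices of the two halves change upon concatenation. You anticipate that bookkeeping of this kind is needed, but without the level-$(k+1)$ rerouting through fresh level-$k$ common neighbors there is no well-defined construction for the bookkeeping to verify, so as written the proposal has a genuine gap.
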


Note that if $l-k$ is even, then the first and last level of $Q_{n,[k,l]}$ are from the same bipartite class, so the cycle obtained from Theorem~\ref{thm:trim} is saturating, which immediately yields Theorem~\ref{thm:sat}~(ii).

Before we provide the proof of Theorem~\ref{thm:trim}, we first introduce a few definitions and prove several auxiliary lemmas.
For any graph $G$ whose vertices are bitstrings and for any bitstring $x$, we write $G\circ x$ for the graph obtained from $G$ by concatenating every vertex with $x$.
For $b\in\{0,1\}$ and any integer $k\geq 0$ we let $b^k$ denote the bitstring that consists of exactly $k$ many $b$-bits.
For any two bitstrings $x$ and $y$, we let $d(x,y)$ denote the number of positions in which $x$ and $y$ differ (their Hamming distance).

For any sequence $\Gamma$ of not necessarily distinct vertices in a graph we let $f(\Gamma)$ and $\ell(\Gamma)$ denote the first and the last entry of $\Gamma$.
We also define $\rev(\Gamma)$ as the reversed sequence of vertices, i.e., $f(\rev(\Gamma))=\ell(\Gamma)$ and $\ell(\rev(\Gamma))=f(\Gamma)$.
Moreover, we let $s_\Gamma(x)$ and $p_\Gamma(x)$, respectively, denote the successor and predecessor of the vertex $x$ in $\Gamma$, where we define cyclically $s_\Gamma(\ell(\Gamma))=f(\Gamma)$ and $p_\Gamma(f(\Gamma))=\ell(\Gamma)$.
We may omit the subscript $\Gamma$ whenever it is clear from the context.
For two sequences $\Gamma$ and $\Gamma'$ we let $(\Gamma,\Gamma')$ denote their concatenation.
Furthermore, if $\Gamma$ is nonempty then we let $\Gamma^-$ denote the sequence obtained from $\Gamma$ by removing the last element, i.e., $\Gamma=(\Gamma^-,\ell(\Gamma))$.

The \emph{($n$-dimensional) reflected Gray code} $\Gamma_n$ is a cyclic sequence $\Gamma_n$ of all vertices of $Q_n$ defined recursively by
\begin{subequations}
\label{eq:GC}
\begin{align}
  \Gamma_1     &= (0,1) \enspace, \\
  \Gamma_{n+1} &= (\Gamma_n\circ 0,\rev(\Gamma_n)\circ 1) \enspace, \quad n\geq 1 \enspace.
\end{align}
\end{subequations}
In other words, $\Gamma_{n+1}$ is the concatenation of $\Gamma_n$ where each vertex is augmented with an additional 0-bit in the last coordinate and the reverse of $\Gamma_n$ augmented with an additional 1-bit.
See Figure~\ref{fig:G5} for an illustration.

\begin{figure}
\centering
\includegraphics[scale=0.916]{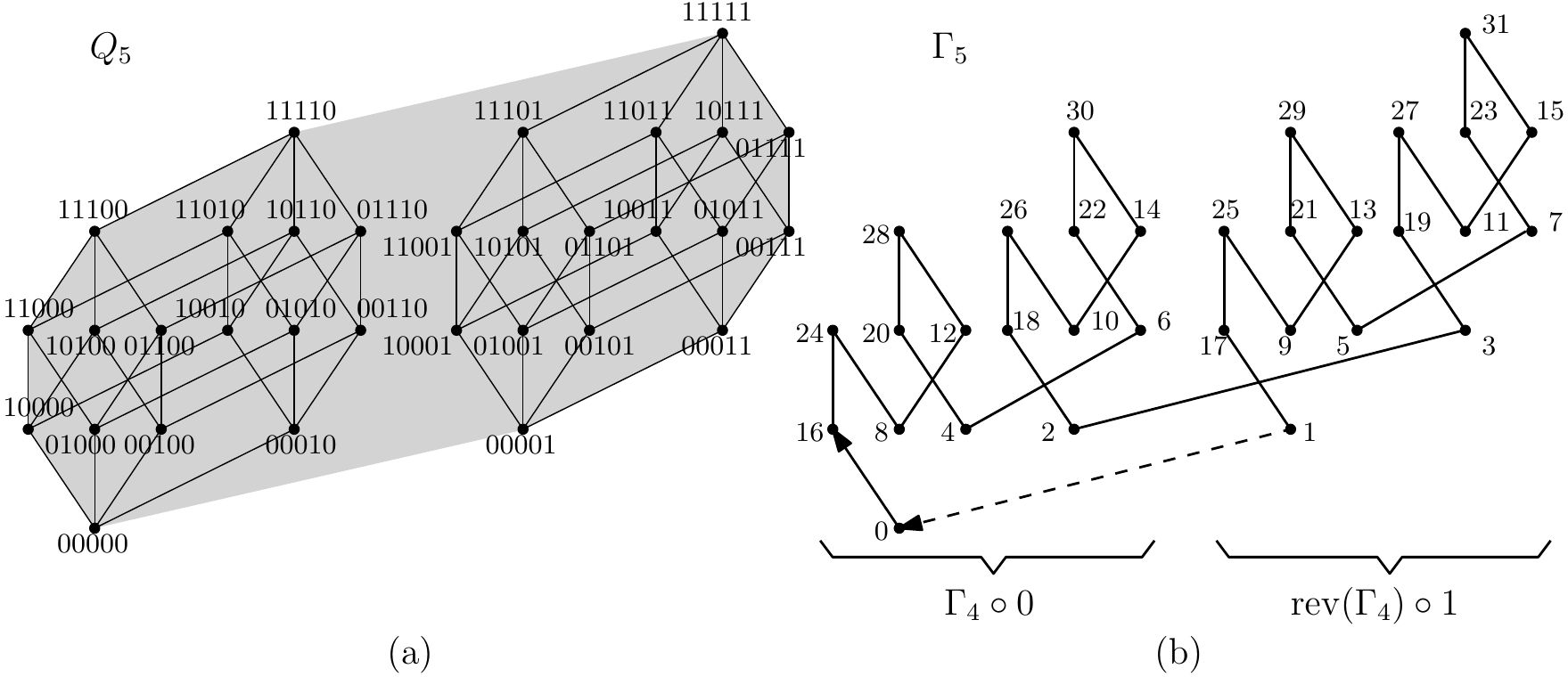} % scale down from 12pt to 11pt font size
\caption{
(a) The hypercube $Q_5$, where the gray area represents all 16 edges along which the last bit is flipped.
(b) The reflected Gray code $\Gamma_5$ in $Q_5$, where the numbers are integer representations of the vertices in $Q_5$.
The dashed edge represents the adjacency between the last and first vertex of $\Gamma_5$.}
\label{fig:G5}
\end{figure}

The reflected Gray code $\Gamma_n$ is the standard example how to enumerate all bitstrings of length~$n$ such that any two consecutive bitstrings differ in exactly one bit.
For an explicit definition of $\Gamma_n$ and further interesting properties see \cite[Section~7.2.1.1]{MR3444818}.

For any $0\leq k \leq n$ let $\Gamma_{n,k}$ be the subsequence of $\Gamma_n$ that contains all vertices in level~$k$.
From \eqref{eq:GC} it follows that
\begin{subequations}
\label{eq:GCk}
\begin{align}
  \Gamma_{1,0}   &= (0) \enspace, \quad \Gamma_{1,1} = (1) \enspace, \\
  \Gamma_{n+1,k} &= (\Gamma_{n,k}\circ 0,\rev(\Gamma_{n,k-1})\circ 1) \enspace, \quad n\geq 1 \text{ and } 0\leq k\leq n+1 \enspace, \label{eq:GCk2}
\end{align}
\end{subequations}
where for unified treatment of border cases we define $\Gamma_{n,k}:=()$ (i.e., the empty sequence) whenever $k<0$ or $k>n$.
Furthermore, by \eqref{eq:GCk} we have
\begin{subequations}
\label{eq:GCkFL}
\begin{align}
  f(\Gamma_{n,k})    &= 1^k 0^{n-k} \enspace, \quad 0\leq k\leq n \enspace, \label{eq:GCkF} \\
  \ell(\Gamma_{n,k}) &= 1^{k-1} 0^{n-k} 1 \enspace, \quad 0<k\leq n \enspace. \label{eq:GCkL} \\
  \ell(\Gamma_{n,0}) &= f(\Gamma_{n,0}) = 0^n \enspace. \label{eq:GCkL0}
\end{align}
\end{subequations}
See Figure~\ref{fig:trimming} for an illustration.

As already observed in \cite{MR0349274}, any two consecutive vertices in $\Gamma_{n,k}$ differ in exactly two positions.
The sequence $\Gamma_{n,k}$ therefore provides an enumeration of all $k$-element subsets of $[n]$ such that any two consecutive $k$-sets differ in exchanging a single element, recall Theorem~\ref{thm:comb}.
For the purpose of self-containment we rephrase the inductive argument from \cite{MR0349274}.

\begin{lemma}[\cite{MR0349274}]
\label{lem:GCk}
For any $n\geq 2$ and $0<k<n$ let $x$ be a vertex of $\Gamma_{n,k}$ and $y:=s_{\Gamma_{n,k}}(x)$.
Then we have $d(x,y)=2$.
\end{lemma}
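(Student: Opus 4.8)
The plan is to prove Lemma~\ref{lem:GCk} by induction on $n$, following the recursion \eqref{eq:GCk} for $\Gamma_{n,k}$. The base case $n=2$ has only $k=1$, where $\Gamma_{2,1}=(10,01)$ is a cyclic sequence of two vertices at Hamming distance $2$, so the claim holds. For the inductive step, assume $n\geq 2$ and that the statement holds for $\Gamma_{n,j}$ for all $0<j<n$; we must establish it for $\Gamma_{n+1,k}$ with $0<k<n+1$, i.e.\ $1\leq k\leq n$.

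By \eqref{eq:GCk2} we have $\Gamma_{n+1,k}=(\Gamma_{n,k}\circ 0,\rev(\Gamma_{n,k-1})\circ 1)$. The key observation is that concatenating a fixed bit to every vertex preserves all Hamming distances, and reversing a sequence does not change the multiset of consecutive pairs. So for any pair of consecutive vertices $x,y$ in $\Gamma_{n+1,k}$ there are three cases: (1) both lie in the block $\Gamma_{n,k}\circ 0$, in which case $d(x,y)=2$ follows from the inductive hypothesis applied to $\Gamma_{n,k}$ (this requires $0<k<n$; if $k=n$ this block is a single vertex or empty and contributes no such internal pair); (2) both lie in the block $\rev(\Gamma_{n,k-1})\circ 1$, in which case $d(x,y)=2$ follows from the inductive hypothesis applied to $\Gamma_{n,k-1}$ (requiring $0<k-1<n$; if $k-1=0$ this block is the single vertex $0^n 1$ and contributes no internal pair, and if $k-1=n$, i.e.\ $k=n+1$, we are outside our range); (3) the pair crosses the boundary between the two blocks — either at the junction $\bigl(\ell(\Gamma_{n,k})\circ 0,\,f(\rev(\Gamma_{n,k-1}))\circ 1\bigr)=\bigl(\ell(\Gamma_{n,k})\circ 0,\,\ell(\Gamma_{n,k-1})\circ 1\bigr)$, or at the cyclic wrap-around $\bigl(\ell(\rev(\Gamma_{n,k-1}))\circ 1,\,f(\Gamma_{n,k})\circ 0\bigr)=\bigl(f(\Gamma_{n,k-1})\circ 1,\,f(\Gamma_{n,k})\circ 0\bigr)$.

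The two boundary pairs are exactly where the explicit formulas \eqref{eq:GCkFL} are needed. For the junction pair, using \eqref{eq:GCkL} (when $k\leq n$, so $\ell(\Gamma_{n,k})=1^{k-1}0^{n-k}1$) and \eqref{eq:GCkL} again for level $k-1$ when $k-1>0$ (so $\ell(\Gamma_{n,k-1})=1^{k-2}0^{n-k+1}1$), one checks directly that $d(1^{k-1}0^{n-k}10,\,1^{k-2}0^{n-k+1}11)=2$: the strings differ only in coordinates $k-1$ and $n+1$. The border case $k-1=0$, i.e.\ $k=1$, must be handled using \eqref{eq:GCkL0}: then $\ell(\Gamma_{n,0})=0^n$, and the junction pair is $(1\,0^{n-1}\,0,\,0^n\,1)=(10^{n-1}0,\,0^n1)$, which differ in coordinates $1$ and $n+1$, so again $d=2$. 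For the wrap-around pair, \eqref{eq:GCkF} gives $f(\Gamma_{n,k})=1^k0^{n-k}$ and $f(\Gamma_{n,k-1})=1^{k-1}0^{n-k+1}$, so the pair is $(1^{k-1}0^{n-k+1}1,\,1^k0^{n-k}0)$, differing only in coordinates $k$ and $n+1$, hence $d=2$.

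The main obstacle, such as it is, is purely bookkeeping: one must carefully track which of the two blocks degenerate to a single vertex (or the empty sequence) at the extreme values $k=n$ and $k=1$, so that no spurious "consecutive pair" is claimed within a degenerate block, and so that the appropriate formula among \eqref{eq:GCkF}, \eqref{eq:GCkL}, \eqref{eq:GCkL0} is invoked at each junction. Once these border cases are organized, the verification in each case reduces to reading off two differing coordinates from the explicit bitstrings. No genuine difficulty arises beyond this case analysis.
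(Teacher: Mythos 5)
Your proof is correct and follows essentially the same route as the paper's: induction on $n$ via the recursion \eqref{eq:GCk2}, reducing to the two boundary pairs (the junction and the cyclic wrap-around) and checking them against the explicit formulas \eqref{eq:GCkFL}. The only blemish is a transcription slip in the $k=1$ junction case, where the first string is $\ell(\Gamma_{n,1})\circ 0=0^{n-1}10$ rather than $10^{n-1}0$ (so the two differing coordinates are $n$ and $n+1$, not $1$ and $n+1$), but the conclusion $d(x,y)=2$ is unaffected.
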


\begin{proof}
We prove the lemma by induction on $n$.
The statement trivially holds for $n=2$, as $\Gamma_{2,1}=(10,01)$, settling the induction basis.
For the induction step we assume that the statement holds for some $n\geq 2$ and all $0<k<n$, and we show that it also holds for $n+1$ and all $0<k<n+1$.
By \eqref{eq:GCk2} it suffices to consider only the vertices $x=\ell(\Gamma_{n,k}\circ 0)$ and $x=\ell(\rev(\Gamma_{n,k-1})\circ 1)$.
For all other choices of $x$ the claim follows easily by induction.
For the case $x=\ell(\Gamma_{n,k}\circ 0)$ we obtain
\begin{equation}
\label{eq:x1}
  x=\ell(\Gamma_{n,k})\circ 0 \eqBy{eq:GCkL} 1^{k-1} 0^{n-k} 1 0 \enspace,
\end{equation}
for all $0<k<n+1$ and
\begin{equation}
\label{eq:sx1}
  y=s_{\Gamma_{n,k}}(x) \eqBy{eq:GCk2} f(\rev(\Gamma_{n,k-1}))\circ 1=\ell(\Gamma_{n,k-1})\circ 1 \eqByM{\eqref{eq:GCkL}, \eqref{eq:GCkL0}}
  \begin{cases} 1^{k-2} 0^{n-k+1} 1 1 & \text{if } 1<k<n+1 \enspace, \\
                0^n 1 & \text{if } k=1 \enspace.
  \end{cases}
\end{equation}
Comparing the right-hand sides of \eqref{eq:x1} and \eqref{eq:sx1} shows that indeed $d(x,y)=2$.
For the case $x=\ell(\rev(\Gamma_{n,k-1})\circ 1)$ we obtain
\begin{equation}
\label{eq:x2}
  x = \ell(\rev(\Gamma_{n,k-1}))\circ 1 = f(\Gamma_{n,k-1})\circ 1 \eqBy{eq:GCkF} 1^{k-1} 0^{n-k+1} 1
\end{equation}
and
\begin{equation}
\label{eq:sx2}
  y=s_{\Gamma_{n,k}}(x) \eqBy{eq:GCk2} f(\Gamma_{n,k})\circ 0 \eqBy{eq:GCkF} 1^{k} 0^{n-k} 0
\end{equation}
for all $0<k<n+1$.
Comparing the right-hand sides of \eqref{eq:x2} and \eqref{eq:sx2} also yields $d(x,y)=2$, as desired.
This completes the proof of the lemma.
\end{proof}

Clearly, any two vertices $x,y$ in level~$k$ at distance 2 have a unique common neighbor in level~$k-1$ and a unique common neighbor in level~$k+1$, let us denote them by $\down(x,y)$ and $\up(x,y)$, respectively.
The key idea in trimming the reflected Gray code to a given sequence of consecutive levels $[k,l]$, where $l-k\geq 2$, is to replace the subpath $P$ of $\Gamma_n$ in $Q_n$ between a vertex $x$ in level~$l-1$ and its consecutive vertex $s_{\Gamma_{n,l-1}}(x)$ by the path $(x,\up(x,s(x)),s(x))$ if $P$ ascends above level~$l-1$, and between a vertex $x$ in level~$k+1$ and its consecutive vertex $s_{\Gamma_{n,k+1}}(x)$ by the path $(x,\down(x,s(x)),s(x))$ if $P$ descends below level~$k+1$.
See Figure~\ref{fig:trimming} for an illustration.

\begin{figure}
\centering
\includegraphics[scale=0.916]{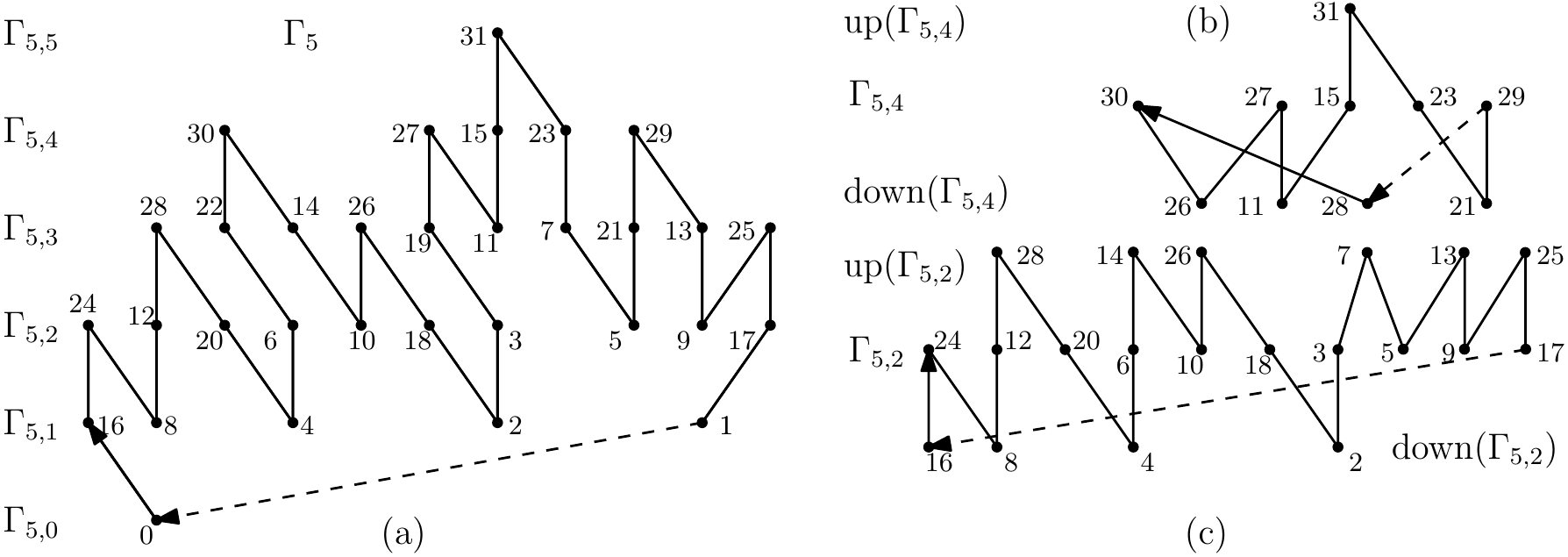} % scale down from 12pt to 11pt font size
\caption{
(a) Schematic drawing of $\Gamma_5$ highlighting the order in which levels are visited, and the corresponding sequences $\Gamma_{5,k}$, $0\leq k\leq 5$, in each row. (b) $\Gamma_5$ trimmed to levels 3 up to 5 of $Q_5$ and the sequences $\up(\Gamma_{5,4})$, $\down(\Gamma_{5,4})$.
(c) $\Gamma_5$ trimmed to levels 1 up to 3 of $Q_5$ and the sequences $\up(\Gamma_{5,2})$, $\down(\Gamma_{5,2})$.}
\label{fig:trimming}
\end{figure}

Formally, we say that a vertex $x$ of $Q_n$ in level~$k$ is an \emph{upward vertex} if $s_{\Gamma_n}(x)$ is in level~$k+1$, and a \emph{downward vertex} if $s_{\Gamma_n}(x)$ is in level~$k-1$. Note that no other case is possible.
Thus, the reflected Gray code $\Gamma_n$ ascends via upward vertices and descends via downward vertices.
Note that $\ell(\Gamma_{n,k})$ is a downward vertex for every $0<k\leq n$ since $\Gamma_n$ starts in level~0 and ends in level~1.
For any $0<k\leq n$, we let $\up(\Gamma_{n,k})$ denote the sequence of all vertices $\up(x,s(x))$ in level~$k+1$, where $x$ is an upward vertex of $\Gamma_{n,k}$, in the order induced by $\Gamma_{n,k}$.
Similarly, for any $0\leq k<n$ we let $\down(\Gamma_{n,k})$ denote the sequence of all vertices $\down(x,s(x))$ in level~$k-1$, where $x$ is a downward vertex of $\Gamma_{n,k}$, in the order induced by $\Gamma_{n,k}$.
Note that $\up(\Gamma_{n,n-1})=(1^n)$ and $\down(\Gamma_{n,1})=(0^n)$ for every $n\geq 2$.
Moreover, we trivially have $\up(\Gamma_{n,n})=()$ and $\down(\Gamma_{n,0})=()$.
Furthermore, observe that $\up$ and $\rev$ are commutative, as are $\down$ and $\rev$ up to the last vertex.
More precisely, we have
\begin{subequations}
\begin{align}
  \up(\rev(\Gamma_{n,k}))       &= \rev(\up(\Gamma_{n,k})) \enspace, \quad 0<k\leq n \enspace, \label{eq:uprev} \\
  (\down(\rev(\Gamma_{n,k})))^- &= \rev((\down(\Gamma_{n,k}))^-) \enspace, \quad 0\leq k< n \enspace. \label{eq:downrev}
\end{align}
\end{subequations}
Moreover, we know that
\begin{equation}
\label{eq:last}
  \ell(\down(\Gamma_{n,k}))
  =\down(f(\Gamma_{n,k}),\ell(\Gamma_{n,k}))
  =\down(\ell(\rev(\Gamma_{n,k})),f(\rev(\Gamma_{n,k})))
  =\ell(\down(\rev(\Gamma_{n,k}))) \enspace.
\end{equation}

The next lemma captures the key property guaranteeing that in trimming the reflected Gray code as described above we will never visit the same vertex twice.

\begin{figure}
\centering
\includegraphics[scale=0.916]{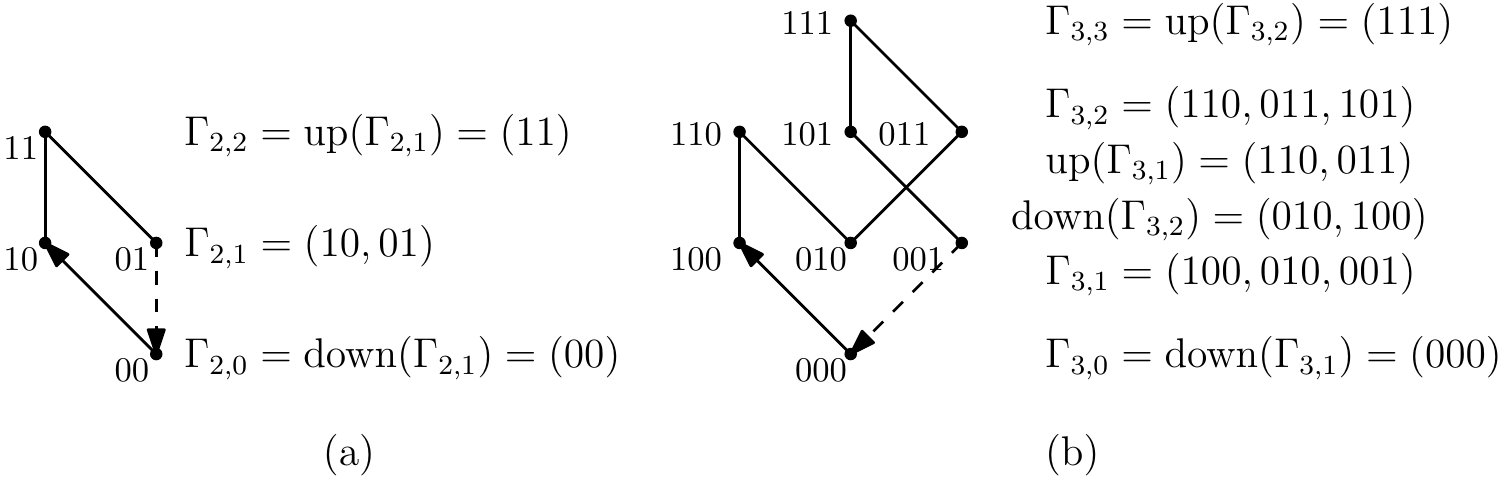} % scale down from 12pt to 11pt font size
\caption{$\Gamma_n$, $\Gamma_{n,k}$, $\up(\Gamma_{n,k})$, and $\down(\Gamma_{n,k})$ for (a) $n=2$ and (b) $n=3$  and all possible values of $k$.}
\label{fig:G2-3}
\end{figure}

\begin{lemma}
\label{lem:GCksub}
For any $n\geq 2$ and $0<k<n$, the sequence $\up(\Gamma_{n,k})$ is a subsequence of $\Gamma_{n,k+1}$, and the sequence $(\ell(\down(\Gamma_{n,k})),(\down(\Gamma_{n,k}))^-)$ is a subsequence of $\Gamma_{n,k-1}$.
Moreover, the second sequence satisfies the following two additional conditions:
We have $\ell(\down(\Gamma_{n,k}))=f(\Gamma_{n,k-1})$, and the sequence $\down(\Gamma_{n,k})$ does not contain the vertex $\ell(\Gamma_{n,k-1})$ if $k>1$.
\end{lemma}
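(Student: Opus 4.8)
The plan is to argue by induction on $n$, running in parallel with the recursive structure~\eqref{eq:GCk2} of the sequences $\Gamma_{n,k}$, just as in the proof of Lemma~\ref{lem:GCk}. The base case $n=2$ forces $k=1$, where $\Gamma_{2,1}=(10,01)$, $\up(\Gamma_{2,1})=(11)$ is the singleton subsequence of $\Gamma_{2,2}=(11)$, and $\down(\Gamma_{2,1})=(00)=\Gamma_{2,0}$; the extra conditions hold vacuously since $k=1$. For the inductive step I would split $\Gamma_{n+1,k}=(\Gamma_{n,k}\circ 0,\rev(\Gamma_{n,k-1})\circ 1)$ into its two halves. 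The first half $\Gamma_{n,k}\circ 0$ lies entirely in the subcube of vertices ending in $0$, so its upward and downward vertices, together with the corresponding $\up$- and $\down$-vertices, are exactly those of $\Gamma_{n,k}$ with a $0$ appended — \emph{except} that the last vertex $\ell(\Gamma_{n,k})\circ 0$ is now an upward vertex of $\Gamma_{n+1,k}$ (it moves into the second half, which has larger weight), whereas $\ell(\Gamma_{n,k})$ was a downward vertex of $\Gamma_{n,k}$. The second half $\rev(\Gamma_{n,k-1})\circ 1$ behaves analogously with weight shifted by one and with the reversal handled via \eqref{eq:uprev}, \eqref{eq:downrev}, \eqref{eq:last}; here the last vertex $\ell(\rev(\Gamma_{n,k-1}))\circ 1=f(\Gamma_{n,k-1})\circ 1$ is a downward vertex of $\Gamma_{n+1,k}$ (wrapping cyclically back to level~1 of $\Gamma_{n+1}$), so it contributes a $\down$-vertex even though as the first vertex of $\rev(\Gamma_{n,k-1})$ it was an upward vertex internally.

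Concretely, for the $\up$-statement I expect to establish the recursion $\up(\Gamma_{n+1,k}) = \bigl(\up(\Gamma_{n,k})\circ 0,\ \up(\ell(\Gamma_{n,k})\circ 0,\ f(\rev(\Gamma_{n,k-1}))\circ 1),\ \rev(\up(\Gamma_{n,k-1}))\circ 1\bigr)$, using \eqref{eq:uprev} for the reversed part. The middle term evaluates using \eqref{eq:GCkL} and \eqref{eq:GCkF} to a bitstring of the form $1^{k-1}0^{n-k}11$ (for $k>1$), which is precisely $\ell(\Gamma_{n,k-1})\circ 1$ — and by \eqref{eq:GCk2} this is a vertex of $\rev(\Gamma_{n,k-1})\circ 1$, hence of $\Gamma_{n+1,k+1}$, sitting in the right place. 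Matching this against the recursion $\Gamma_{n+1,k+1}=(\Gamma_{n,k+1}\circ 0,\rev(\Gamma_{n,k})\circ 1)$ and invoking the induction hypothesis for both $\Gamma_{n,k}$ and $\Gamma_{n,k-1}$ gives the subsequence claim. For the $\down$-statement I would derive the analogous recursion $\down(\Gamma_{n+1,k}) = \bigl(\down(\Gamma_{n,k})\circ 0,\ \rev((\down(\Gamma_{n,k-1}))^-)\circ 1,\ \down(\ell(\rev(\Gamma_{n,k-1}))\circ 1,\ f(\Gamma_{n+1,k})\bigr)$ — the last vertex coming from the cyclic wrap of $\Gamma_{n+1}$ — and then check that the cyclic rotation $(\ell(\down(\Gamma_{n+1,k})),(\down(\Gamma_{n+1,k}))^-)$ is a subsequence of $\Gamma_{n+1,k-1}=(\Gamma_{n,k-1}\circ 0,\rev(\Gamma_{n,k-2})\circ 1)$, again using \eqref{eq:downrev} and \eqref{eq:last} to align the reversed middle block. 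The two extra conditions then follow: $\ell(\down(\Gamma_{n+1,k})) = \down(f(\Gamma_{n+1,k}),\ell(\Gamma_{n+1,k}))$, and by \eqref{eq:last} and \eqref{eq:GCkFL} this common lower neighbor works out to $1^{k-1}0^{n+1-k}=f(\Gamma_{n+1,k-1})$; and $\down(\Gamma_{n+1,k})$ avoids $\ell(\Gamma_{n+1,k-1})=1^{k-2}0^{n+1-k}1$ because by induction neither $\down(\Gamma_{n,k})\circ 0$ nor $\down(\Gamma_{n,k-1})\circ 1$ contains it and the single wrap-vertex is $f(\Gamma_{n+1,k})\ne\ell(\Gamma_{n+1,k-1})$.

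The main obstacle I anticipate is purely bookkeeping: correctly identifying which boundary vertex of each half of $\Gamma_{n+1,k}$ changes its up/down status relative to its status inside $\Gamma_{n,k}$ or $\Gamma_{n,k-1}$, and tracking how the cyclic ``wrap'' of $\Gamma_{n+1}$ (from level~$1$ back to level~$0$) injects exactly one extra $\down$-vertex and one extra $\up$-vertex at the seam. One must be careful that the internal transition of $\rev(\Gamma_{n,k-1})\circ 1$ at the point where it passes through its own level-$k$ vertices is governed by the induction hypothesis applied to $\Gamma_{n,k-1}$ together with \eqref{eq:uprev}--\eqref{eq:downrev}, and that no vertex is double-counted at the junction between the two halves. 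Once the correct recursions for $\up(\Gamma_{n+1,k})$ and $\down(\Gamma_{n+1,k})$ are written down, verifying the subsequence property and the two side conditions is a routine comparison of bitstrings using \eqref{eq:GCkFL}, exactly in the style of the displayed computations~\eqref{eq:x1}--\eqref{eq:sx2} in the proof of Lemma~\ref{lem:GCk}. Edge cases $k=1$ (where $\down(\Gamma_{n,k})=(0^n)$ is degenerate and the ``if $k>1$'' clause is vacuous) and $k=n$ (where $\up(\Gamma_{n,k})=()$) should be checked separately but are immediate from the conventions $\Gamma_{n,k}:=()$ for $k<0$ or $k>n$.
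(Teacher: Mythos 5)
Your overall strategy---induction on $n$ following the recursion \eqref{eq:GCk2} and doing careful bookkeeping at the seam between the two halves of $\Gamma_{n+1,k}$---is exactly the paper's, but both recursions you write down are wrong at precisely that seam, and the errors are fatal rather than cosmetic. For $k>1$ the vertex $\ell(\Gamma_{n,k})\circ 0$ does \emph{not} become an upward vertex of $\Gamma_{n+1}$: it is not the last vertex of the first half $\Gamma_n\circ 0$ (that is $\ell(\Gamma_n)\circ 0=0^{n-1}10$, a level-$1$ vertex), so its successor in $\Gamma_{n+1}$ is still $s_{\Gamma_n}(\ell(\Gamma_{n,k}))\circ 0$, which lies in level~$k-1$. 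Up/down status is determined by the successor in the full Gray code $\Gamma_{n+1}$, not by where the next level-$k$ vertex of the subsequence $\Gamma_{n+1,k}$ happens to sit. Hence your middle term $\up(\ell(\Gamma_{n,k})\circ 0,\,f(\rev(\Gamma_{n,k-1}))\circ 1)$ must not appear in $\up(\Gamma_{n+1,k})$; the correct recursion for $1<k<n+1$ is simply $\up(\Gamma_{n+1,k})=(\up(\Gamma_{n,k}\circ 0),\rev(\up(\Gamma_{n,k-1}\circ 1)))$, and the status change you describe occurs only in the special case $k=1$, where $\ell(\Gamma_{n,1})\circ 0=\ell(\Gamma_n)\circ 0$ really is the last vertex of the first half. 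Concretely, for $n+1=4$ and $k=2$ your formula inserts $1011$ into $\up(\Gamma_{4,2})$, but $1010$ is a downward vertex of $\Gamma_4$ (its successor is $0010$), so the true sequence is $\up(\Gamma_{4,2})=(1110,0111,1101)$.

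The seam vertex you wrongly moved into $\up$ is exactly the one that must be inserted into $\down$: $\ell(\Gamma_{n,k})\circ 0$ stays a downward vertex, but its successor \emph{in the level-$k$ subsequence} changes from the cyclic wrap $f(\Gamma_{n,k})\circ 0$ to $\ell(\Gamma_{n,k-1})\circ 1$, so its associated $\down$-vertex changes from $f(\Gamma_{n,k-1})\circ 0$ to $\ell(\Gamma_{n,k-1})\circ 0$. Your recursion keeps the whole block $\down(\Gamma_{n,k})\circ 0$ instead of $(\down(\Gamma_{n,k}\circ 0))^-$ followed by the new entry $\ell(\Gamma_{n,k-1}\circ 0)$; consequently it lists $f(\Gamma_{n,k-1})\circ 0$ twice (once as the stale last entry of the first block, once as the wrap term) and omits $\ell(\Gamma_{n,k-1})\circ 0$ altogether---for $n+1=4$, $k=2$ you get $(0100,1000,1000)$ instead of the correct $(0100,0010,1000)$. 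The same confusion undermines your avoidance argument: by the induction hypothesis $\down(\Gamma_{n,k-1})$ \emph{does} contain $f(\Gamma_{n,k-2})$, namely as its last entry, so $\down(\Gamma_{n,k-1})\circ 1$ does contain $\ell(\Gamma_{n+1,k-1})=f(\Gamma_{n,k-2})\circ 1$; it is only after that last entry is trimmed (being replaced by the new wrap vertex, which is $f(\Gamma_{n+1,k-1})$, not $f(\Gamma_{n+1,k})$ as you write) that the avoidance holds. Two smaller points: the induction should also verify $n=3$ directly, since the step for $\Gamma_{3,2}$ would otherwise need the hypothesis for $\Gamma_{2,2}$, i.e., $k=n$, which the lemma does not cover; and for $k=1$ the first extra condition $\ell(\down(\Gamma_{n,1}))=f(\Gamma_{n,0})$ is not vacuous, though it holds trivially. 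Until the two recursions are corrected as above, the induction does not go through; with them corrected, the remainder of your outline coincides with the paper's proof.
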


Note that the sequence $(\ell(\down(\Gamma_{n,k})),(\down(\Gamma_{n,k}))^-)$ referred to in Lemma~\ref{lem:GCksub} is simply $\down(\Gamma_{n,k})$ rotated to the right once.

\begin{proof}
We proceed by induction on $n$.
Figure~\ref{fig:G2-3} shows that the statement holds for $n=2$ and $n=3$, settling the induction basis.
We now assume that statement holds for some $n\geq 3$ and all $0<k<n$, and show that it also holds for $n+1$ and all $0<k<n+1$.

For $1<k<n+1$, since both $\ell(\Gamma_{n,k})$ and $\ell(\Gamma_{n+1,k})$ are downward vertices, we have
\begin{equation}
\label{eq:upseq}
  \up(\Gamma_{n+1,k}) \eqBy{eq:GCk2} (\up(\Gamma_{n,k}\circ 0),\up(\rev(\Gamma_{n,k-1})\circ 1)) \eqBy{eq:uprev} (\up(\Gamma_{n,k}\circ 0),\rev(\up(\Gamma_{n,k-1}\circ 1))) \enspace.
\end{equation}
By induction, $\up(\Gamma_{n,k})$ and $\up(\Gamma_{n,k-1})$ are subsequences of $\Gamma_{n,k+1}$ and $\Gamma_{n,k}$, respectively (for $k=n$ the sequence $\up(\Gamma_{n,k})$ is empty), so we obtain from \eqref{eq:GCk2} and \eqref{eq:upseq} that $\up(\Gamma_{n+1,k})$ is a subsequence of $\Gamma_{n+1,k+1}$.
For $k=1$ the vertex $\ell(\Gamma_{n,1})$ is a downward vertex in $\Gamma_n$, but not in $\Gamma_{n+1}$.
In fact, in $\Gamma_{n+1,1}$ there is only one more vertex after $\ell(\Gamma_{n,1})\circ 0=0^{n-1}10$, namely the vertex $f(\Gamma_n)\circ 1=0^n 1$ (recall \eqref{eq:GCk2} and \eqref{eq:GCkFL}).
In this case we therefore have
\begin{equation}
\label{eq:upseq1}
  \up(\Gamma_{n+1,1}) \eqBy{eq:GCk2} (\up(\Gamma_{n,1}\circ 0),\up(0^{n-1}10,0^n1))=(\up(\Gamma_{n,1}\circ 0),0^{n-1}11) \enspace,
\end{equation}
and since $\Gamma_{n+1,2}$ visits all vertices ending with a 0-bit before all vertices ending with a 1-bit, we conclude from \eqref{eq:upseq1} that $\up(\Gamma_{n+1,1})$ is indeed a subsequence of $\Gamma_{n+1,2}$.
This completes the proof of the first part of the lemma.

For $k=1$ we have $\down(\Gamma_{n+1,1})=(0^{n+1})$, which is indeed a subsequence of $\Gamma_{n+1,0}=(0^{n+1})$.
We now assume that $1<k<n+1$.
Here, the argument for downward vertices is more complicated than for upward vertices since the successors of $\ell(\Gamma_{n,k}\circ 0)$, $\ell(\rev(\Gamma_{n,k-1}\circ 1))$ change with the concatenation of $\Gamma_{n,k}\circ 0$ and $\rev(\Gamma_{n,k-1})\circ 1$.
By the induction hypothesis, \eqref{eq:downrev} and \eqref{eq:last} we have
\begin{subequations}
\label{eq:down-ind}
\begin{align}
  \down(\Gamma_{n,k}\circ 0)         &= \big((\down(\Gamma_{n,k}\circ 0))^-,f(\Gamma_{n,k-1}\circ 0)\big) \enspace, \label{eq:down-ind1} \\
  \down(\rev(\Gamma_{n,k-1}\circ 1)) &= \big(\rev((\down(\Gamma_{n,k-1}\circ 1))^-),f(\Gamma_{n,k-2}\circ 1)\big) \enspace. \label{eq:down-ind2}
\end{align}
\end{subequations}
From \eqref{eq:down-ind1} we obtain that the vertex $f(\Gamma_{n,k-1}\circ 0)=f(\Gamma_{n+1,k-1})$ is not contained in the sequence $(\down(\Gamma_{n,k}\circ 0))^-$ and from \eqref{eq:down-ind2} that the vertex $f(\Gamma_{n,k-2}\circ 1)=\ell(\Gamma_{n+1,k-1})$ is not contained in the sequence $\rev((\down(\Gamma_{n,k-1}\circ 1))^-)$ (recall \eqref{eq:GCkF} and \eqref{eq:GCkL}).

We now compute the two vertices in $\down(\Gamma_{n+1,k})$ added between the boundaries of $\down(\Gamma_{n,k}\circ 0)$ and $\down(\rev(\Gamma_{n,k-1}\circ 1))$ in the induction step \eqref{eq:GCk2}.
These two vertices are
\begin{align}
  \down(\ell(\Gamma_{n,k}\circ 0),f(\rev(\Gamma_{n,k-1}\circ 1))) &= \down(\ell(\Gamma_{n,k}\circ 0),\ell(\Gamma_{n,k-1}\circ 1)) \notag \\
  &\eqBy{eq:GCkL} \down(1^{k-1}0^{n-k}10, 1^{k-2}0^{n-k+1}11) \notag \\
  &= 1^{k-2}0^{n-k+1}10 \eqBy{eq:GCkL} \ell(\Gamma_{n,k-1}\circ 0) \enspace, \label{eq:down-lfrev} \\
  \down(\ell(\rev(\Gamma_{n,k-1}\circ 1)),f(\Gamma_{n,k}\circ 0)) &=\down(f(\Gamma_{n,k-1}\circ 1),f(\Gamma_{n,k}\circ 0)) \notag \\
  &\eqBy{eq:GCkF} \down(1^{k-1}0^{n-k+1}1, 1^k0^{n-k}0) \notag \\
  &= 1^{k-1}0^{n-k+2} \eqBy{eq:GCkF} f(\Gamma_{n,k-1}\circ 0) \enspace. \label{eq:down-lrevf}
\end{align}

Combining our previous observations, we obtain
\begin{align}
\down(\Gamma_{n+1,k})
 &\eqBy{eq:GCk2} \down\big((\Gamma_{n,k}\circ 0,\rev(\Gamma_{n,k-1}\circ 1))\big) \notag \\
 &\hspace{-20mm} = \Big((\down(\Gamma_{n,k}\circ 0))^-, \down\big(\ell(\Gamma_{n,k}\circ 0),f(\rev(\Gamma_{n,k-1}\circ 1))\big), (\down(\rev(\Gamma_{n,k-1}\circ 1)))^-, \notag \\
 &\down\big(\ell(\rev(\Gamma_{n,k-1}\circ 1)),f(\Gamma_{n,k}\circ 0)\big)\Big) \notag \\
 &\hspace{-20mm} \eqByM{\eqref{eq:downrev},\eqref{eq:down-lfrev},\eqref{eq:down-lrevf}}
   \big((\down(\Gamma_{n,k}\circ 0))^-, \ell(\Gamma_{n,k-1}\circ 0), \rev((\down(\Gamma_{n,k-1}\circ 1))^-), f(\Gamma_{n,k-1}\circ 0)\big) \enspace. \label{eq:downGamma}
\end{align}

As we observed before, the vertex $f(\Gamma_{n,k-1}\circ 0)$ is not contained in the sequence $(\down(\Gamma_{n,k}\circ 0))^-$, and the vertex $\ell(\Gamma_{n,k-1}\circ 0)$ is not contained in this sequence by induction, implying that \eqref{eq:downGamma} is a sequence of distinct vertices in level~$k-1$ of $Q_{n+1}$.
Moreover, as $(\down(\Gamma_{n,k}\circ 0))^-$ and $(\down(\Gamma_{n,k-1}\circ 1))^-$ are subsequences of $\Gamma_{n,k-1}\circ 0$ and $\Gamma_{n,k-2}\circ 1$, respectively, by induction, we obtain from \eqref{eq:GCk2} and \eqref{eq:downGamma} that $(\ell(\down(\Gamma_{n+1,k})),(\down(\Gamma_{n+1,k}))^-)$ is a subsequence of $\Gamma_{n+1,k-1}$, as claimed.
Furthermore, also from \eqref{eq:downGamma} we know that $\ell(\down(\Gamma_{n+1,k}))=f(\Gamma_{n,k-1}\circ 0)=f(\Gamma_{n+1,k-1})$ (recall \eqref{eq:GCkF}), as desired.
Lastly, as observed before, the vertex $\ell(\Gamma_{n+1,k-1})$ is not contained in the sequence $\rev((\down(\Gamma_{n,k-1}\circ 1))^-)$ and its last bit is 1, so it is not contained in the sequence $\down(\Gamma_{n+1,k})$ either.

This completes the proof of the lemma.
\end{proof}

We now strengthen the first part of the statement of Lemma~\ref{lem:GCksub} for upward vertices even further, which will be needed later.

\begin{lemma}
\label{lem:GCkup}
For any $n\geq 2$ and $0<k<n$ let $x$ be an upward vertex of $\Gamma_{n,k}$ and $y:=s_{\Gamma_{n,k}}(x)$.
Then $\up(x,y)$ equals $s_{\Gamma_n}(x)$ or $p_{\Gamma_n}(y)$.
\end{lemma}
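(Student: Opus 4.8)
The plan is to prove Lemma~\ref{lem:GCkup} by induction on $n$, mirroring the structure of the proofs of Lemmas~\ref{lem:GCk} and \ref{lem:GCksub}. The base cases $n=2$ and $n=3$ can be read off from Figure~\ref{fig:G2-3}. For the induction step, using the recursive decomposition \eqref{eq:GCk2}, $\Gamma_{n+1,k}=(\Gamma_{n,k}\circ 0,\rev(\Gamma_{n,k-1})\circ 1)$, an upward vertex $x$ of $\Gamma_{n+1,k}$ either lies in the ``interior'' of $\Gamma_{n,k}\circ 0$ or of $\rev(\Gamma_{n,k-1})\circ 1$, or is the last vertex $\ell(\Gamma_{n,k}\circ 0)$ where the two halves are glued. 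In the interior cases, the last bit of $x$ is fixed (either $0$ or $1$) and is not the bit that flips along $\Gamma_{n+1}$ in that region, so both the successor relation in $\Gamma_{n+1,k}$ and the successor/predecessor relation in $\Gamma_{n+1}$ are inherited from the corresponding relations in $\Gamma_n$ (using \eqref{eq:uprev} for the reversed half); the claim then follows directly from the induction hypothesis applied to $\Gamma_{n,k}$ or $\Gamma_{n,k-1}$.

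The only genuinely new case is the boundary vertex $x=\ell(\Gamma_{n,k}\circ 0)=1^{k-1}0^{n-k}10$ (by \eqref{eq:GCkL}), \emph{when it happens to be an upward vertex} of $\Gamma_{n+1,k}$; recall that $\ell(\Gamma_{n,k})$ is a downward vertex of $\Gamma_n$, but the gluing in \eqref{eq:GCk2} can change its status. Here I would compute $y=s_{\Gamma_{n+1,k}}(x)=f(\rev(\Gamma_{n,k-1})\circ 1)=\ell(\Gamma_{n,k-1})\circ 1$, which by \eqref{eq:GCkL} equals $1^{k-2}0^{n-k+1}11$ when $k>1$ (the case $k=1$ is separate and handled as in \eqref{eq:upseq1}). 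Then $\up(x,y)$ is the unique common neighbor of $x$ and $y$ in level $k+1$, and one checks by direct bit comparison that this is $1^{k-1}0^{n-k}11$. On the other hand, along $\Gamma_{n+1}$ the vertex $x$ sits exactly at the transition between the two halves of \eqref{eq:GCk2}: its successor $s_{\Gamma_{n+1}}(x)$ is $f(\rev(\Gamma_n)\circ 1)=\ell(\Gamma_n)\circ 1=1^{n-1}0\cdot 1$-type, no — more carefully, one must track what $\Gamma_n$ itself does at $x=1^{k-1}0^{n-k}10$ and then append the new coordinate. The point to verify is that exactly one of $s_{\Gamma_{n+1}}(x)$ or $p_{\Gamma_{n+1}}(y)$ equals the computed vertex $\up(x,y)$, i.e. that the trimmed detour through level $k+1$ reuses an edge of $\Gamma_{n+1}$ incident to $x$ or to $y$.

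The main obstacle I anticipate is precisely this boundary bookkeeping: determining, for the glue vertex $x=\ell(\Gamma_{n,k}\circ 0)$, whether it is an upward or downward vertex of $\Gamma_{n+1,k}$ as a function of the parity/magnitude of $k$ and $n$, and then correctly identifying $p_{\Gamma_{n+1}}(y)$, which requires knowing which coordinate of $y=\ell(\Gamma_{n,k-1})\circ 1$ flips to enter $y$ along $\Gamma_{n+1}$. This is the same kind of case analysis already carried out in the proof of Lemma~\ref{lem:GCksub} around equations \eqref{eq:down-lfrev}--\eqref{eq:down-lrevf}, so I expect to be able to reuse those explicit bitstring computations. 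A secondary subtlety is that, unlike Lemma~\ref{lem:GCksub}, here I must identify \emph{which} of the two options ($s_{\Gamma_n}(x)$ versus $p_{\Gamma_n}(y)$) occurs, so the induction hypothesis should probably be stated in the refined form ``if $x$ is an upward vertex whose predecessor in $\Gamma_{n,k}$ also lies below level $k+1$ then $\up(x,y)=s_{\Gamma_n}(x)$, otherwise $\up(x,y)=p_{\Gamma_n}(y)$'', or some equivalent dichotomy that is stable under the two interior cases and that I can verify by hand in the finitely many boundary configurations. Once the dichotomy is pinned down, each case reduces to comparing two explicitly written bitstrings, as in \eqref{eq:x1}--\eqref{eq:sx2}.
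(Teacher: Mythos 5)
Your plan is the paper's proof in outline: induction on $n$ via the decomposition \eqref{eq:GCk2}, with the interior of each half handled by the induction hypothesis (using the reversal identity \eqref{eq:sprev} for the second half) and an explicit check at the glue vertex, with $k=1$ singled out. However, you leave your self-declared ``main obstacle'' unresolved, and it is exactly the point the paper settles in one sentence: for $k>1$ the glue vertex $\ell(\Gamma_{n,k})\circ 0=1^{k-1}0^{n-k}10$ is \emph{never} an upward vertex of $\Gamma_{n+1}$, because $\ell(\Gamma_{n,k})$ is a downward vertex of $\Gamma_n$ and, not being $\ell(\Gamma_n)$, it sits in the interior of the first half of $\Gamma_{n+1}$ and so keeps its downward status after a $0$ is appended. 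Likewise $\ell(\Gamma_{n+1,k})=f(\Gamma_{n,k-1})\circ 1$ is downward, so the cyclic wrap-around never occurs. Hence the entire bitstring computation you sketch for the $k>1$ boundary case is for a configuration that does not arise; only $k=1$ produces a genuinely new upward vertex ($\ell(\Gamma_n)\circ 0=0^{n-1}10$), and there one verifies directly that $\up(0^{n-1}10,0^n1)=0^{n-1}11=s_{\Gamma_{n+1}}(0^{n-1}10)$, as you indicate via \eqref{eq:upseq1}.

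Your ``secondary subtlety'' is also a non-issue: you do not need to know \emph{which} of $s_{\Gamma_n}(x)$ or $p_{\Gamma_n}(y)$ is hit, because the disjunctive statement is symmetric under reversal (swapping $x\leftrightarrow y$ exchanges $s$ and $p$ by \eqref{eq:sprev}) and therefore transfers verbatim from $\Gamma_{n,k}$ and $\Gamma_{n,k-1}$ to both halves of $\Gamma_{n+1,k}$. The refined dichotomy you propose as an induction hypothesis does exist --- it is \eqref{eq:upnext}, where the choice depends only on the parity of $k$ --- but the paper obtains it later from the explicit successor formulas of Lemma~\ref{lem:upexp}, not as part of this induction. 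So: right approach, correct identification of the delicate spots, but the key closing observation (downwardness of the glue vertex for $k>1$) is missing from your argument as written.
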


In other words, Lemma~\ref{lem:GCkup} asserts that the vertex $\up(x,y)$ equals the successor of $x$ or the predecessor of $y$ on $\Gamma_n$.
Note that this implies in particular that $\up(\Gamma_{n,k})$ is a subsequence of $\Gamma_{n,k+1}$.
We shall see later that the statement of Lemma~\ref{lem:GCkup} can be strengthened even further, as demonstrated by \eqref{eq:upnext}; specifically, whether $\up(x,y)$ equals $s_{\Gamma_n}(x)$ or $p_{\Gamma_n}(y)$ is determined only by the parity of $k$.

\begin{proof}
Clearly, for any sequence $\Gamma$ of vertices and any vertex $x$ in $\Gamma$ we have
\begin{equation}
\label{eq:sprev}
  s_\Gamma(x)=p_{\rev(\Gamma)}(x) \enspace, \quad p_\Gamma(x)=s_{\rev(\Gamma)}(x) \enspace.
\end{equation}

We prove the lemma by induction on $n$.
Figure~\ref{fig:G2-3} shows that the statement holds for $n=2$ and $n=3$, settling the induction basis.
For the induction step we assume that the statement holds for some $n\geq 3$ and all $0<k<n$, and we show that it also holds for $n+1$ and all $0<k<n+1$.

First we consider an upward vertex $x$ in $\Gamma_{n+1,k}$ for $1<k<n+1$.
Recall that $\ell(\Gamma_{n,k})$ is a downward vertex, so by \eqref{eq:GCk2} $x$ must be different from $\ell(\Gamma_{n,k})\circ 0$.
Also, $x$ must be different from $\ell(\Gamma_{n+1,k})$ (which by \eqref{eq:GCk2} equals $f(\Gamma_{n,k-1})\circ 1$).
Using \eqref{eq:sprev} the claim therefore follows by induction from \eqref{eq:GCk2}.
Now consider an upward vertex $x$ in $\Gamma_{n+1,1}$.
The only case here where the previous argument fails is that $x$ might be equal to $\ell(\Gamma_{n,1})\circ 0=0^{n-1}10$ (recall \eqref{eq:GCkL}).
This is because even though $\ell(\Gamma_{n,1})=0^{n-1}1$ is a downward vertex in $\Gamma_{n,1}$, since it is the last vertex of $\Gamma_n$ it will be become an upward vertex in $\Gamma_{n+1,1}$, i.e., in $\Gamma_{n+1}$ the vertex $x=0^{n-1}10$ will be followed by $z:=0^{n-1}11$, and there is only one more entry of $\Gamma_{n+1,1}$ after $x$, namely $y=f(\Gamma_{n,0})\circ 1=0^n1$ (recall \eqref{eq:GCk2} and \eqref{eq:GCkF}).
And indeed, the vertices $x$, $y$ and $z$ satisfy the relation $s_{\Gamma_{n+1}}(x)=z=\up(x,y)$.
\end{proof}

We are now ready to prove Theorem~\ref{thm:trim} and use this result to prove Theorem~\ref{thm:sat}~(i)+(ii).

\begin{proof}[Proof of Theorem~\ref{thm:trim}]
We build the desired cycle by trimming the reflected Gray code $\Gamma_n$ to levels $[k,l]$.
Subpaths of $\Gamma_n$ within the levels $[k+1,l-1]$ remain unchanged, including the orientation.
Each subpath $P$ of $\Gamma_n$ that descends from some downward vertex $x$ at level~$k+1$ to lower levels returns back to level~$k+1$ at the vertex $y:=s_{\Gamma_{n,k+1}}(x)$.
Since $d(x,y)=2$ by Lemma~\ref{lem:GCk}, we may replace $P$ by the path $P'=(x,\down(x,y),y)$.
Note that $P'$ has the same end vertices and orientation as $P$, and visits only a single vertex at level~$k$.
After trimming all these descending paths, we visit on level~$k$ precisely the vertices of $\down(\Gamma_{n,k+1})$.
Since $\down(\Gamma_{n,k+1})$, after one rotation to the right, is a subsequence of $\Gamma_{n,k}$ by Lemma~\ref{lem:GCksub}, all these vertices are distinct, and hence distinct trimmed paths may have at most end vertices in level~$k$ in common.

Similar arguments apply for trimming subpaths of $\Gamma_n$ ascending from upward vertices at level~$l-1$ to levels above.
In this case the trimmed subpaths visit at level~$l$ precisely the vertices of $\up(\Gamma_{n,l-1})$, and since this is a subsequence of $\Gamma_{n,l}$ by Lemma~\ref{lem:GCksub}, all these vertices are distinct.
Therefore trimming correctly produces a cycle visiting all vertices in levels $[k,l]$ except the vertices from level~$k$ that are not in $\down(\Gamma_{n,k+1})$ and the vertices from level~$l$ that are not in $\up(\Gamma_{n,l-1})$.
\end{proof}

With Theorem~\ref{thm:trim} in hand, the proof of the first two cases of Theorem~\ref{thm:sat} is straightforward.

\begin{proof}[Proof of Theorem~\ref{thm:sat}~(i)]
We only consider the case $0=k<l\leq n$, the other case follows by symmetry, using that $Q_{n,[k,l]}$ is isomorphic to $Q_{n,[n-l,n-k]}$.
The proof proceeds very similarly to the proof of Theorem~\ref{thm:trim}, but we only trim the subpaths of $\Gamma_n$ ascending above level~$l-1$, so that the highest level where vertices are visited is level~$l$. Thus no trimming is applied at the bottom level~0.
We therefore obtain a cycle that visits all vertices in levels $[0,l]$, except the vertices from level~$l$ that are not in  $\up(\Gamma_{n,l-1})$.
As the cycle omits only vertices from level~$l$, it must be saturating.
\end{proof}

\begin{proof}[Proof of Theorem~\ref{thm:sat}~(ii)]
Follows immediately from Theorem~\ref{thm:trim}, using that if $l-k$ is even, then the first and last level of $Q_{n,[k,l]}$ are from the same bipartite class.
\end{proof}

\subsection{Gluing saturating cycles and proof of Theorem~\ref{thm:sat}~(iii)+(iv)}
\label{sec:gluing}

In this section we prove cases~(iii) and (iv) from Theorem~\ref{thm:sat}.
Trimming the reflected Gray code $\Gamma_n$ to levels $[k,l]$ as described in the last section does not yield a saturating cycle when $l-k\geq 3$ is odd unless $k=0$ or $l=n$.
In general the trimmed cycle omits some vertices from both levels $k$ and $l$, which are from different bipartite classes for odd $l-k$.
We therefore use a different strategy to prove Theorem~\ref{thm:sat}~(iii): We glue together several saturating cycles obtained from Theorem~\ref{thm:sat2}.
However, this gluing approach yields a saturating cycle only if all involved levels are either below or above the middle.
This is reflected by the conditions $l\leq \lceil n/2\rceil$ or $\lfloor n/2\rfloor \leq k$ in Theorem~\ref{thm:sat}~(iii).
Otherwise the omitted vertices would be from different bipartite classes, so the resulting cycle would not be saturating.
To prove Theorem~\ref{thm:sat}~(iv), we inductively glue together pairs of saturating cycles of smaller dimension.
Both proofs are similar to the approach presented in \cite{MR3759914}.

Recall that for any bipartite graph $G$, $v(G)$ denotes the number of vertices of $G$ and $\delta(G)$ denotes the difference between the sizes of the larger and the smaller partition class.
We easily compute
\begin{subequations}
\label{eq:v-delta}
\begin{align}
  v(Q_{n,[k,l]}) &= \sum_{i=k}^l\binom{n}{i} \enspace, \label{eq:vQ} \\
  \delta(Q_{n,[k,l]}) &= \left|\sum_{i=k}^l (-1)^i\binom{n}{i}\right|
  = \begin{cases}
      \binom{n-1}{k-1} + \binom{n-1}{l}            & \text{if $l-k$ is even} \enspace , \\
      \left|\binom{n-1}{k-1}-\binom{n-1}{l}\right| & \text{if $l-k$ is odd} \enspace,
    \end{cases} \label{eq:deltaQ}
\end{align}
\end{subequations}
where the degenerate cases $k=0$ and $l=n$ have to be treated by defining $\binom{n-1}{-1}=\binom{n-1}{n}=0$.
Clearly, $Q_{n,[k,l]}$ can have a Hamilton cycle only if $\delta(Q_{n,[k,l]})=0$, and from \eqref{eq:deltaQ} we conclude that this condition is satisfied if and only if $n$ is odd and $l=n-k$, or if $n$ is even and $[k,l]=[0,n]$.
Recall that Conjecture~\ref{conj:gen-mlc} asserts that this necessary condition is in fact sufficient for finding a Hamilton cycle.

\begin{proof}[Proof of Theorem~\ref{thm:sat}~(iii)]
We only consider the case $1\leq k<l\leq \lceil n/2\rceil$, the other case follows by symmetry.
By Theorem~\ref{thm:sat2}, there is a saturating cycle $C_i$ between levels $i$ and $i+1$ for every $i=k,k+2,\ldots,l-1$.
Note that each $C_i$ visits all vertices in level~$i$ since $l\leq \lceil n/2 \rceil$.
We now show how to join these $(l-k+1)/2\geq 2$ cycles into a single saturating cycle in $Q_{n,[k,l]}$.
For the reader's convenience, the approach is illustrated in Figure~\ref{fig:gluing}.

\begin{figure}
\centering
\includegraphics[scale=0.916]{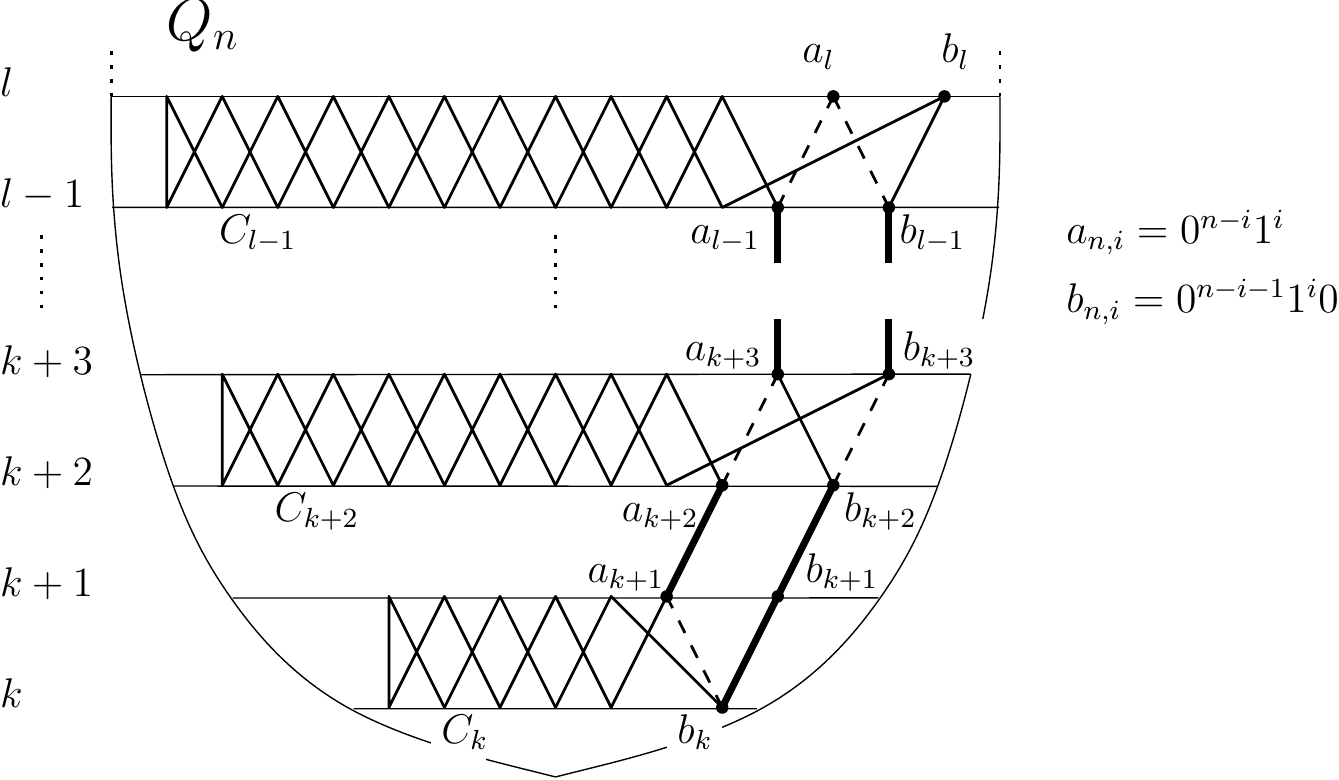} % scale down from 12pt to 11pt font size
\caption{Notations used in the proof of Theorem~\ref{thm:sat}~(iii).
The removed edges are dashed, the added edges are bold.}
\label{fig:gluing}
\end{figure}

For any level~$i$ of $Q_n$, we define two special vertices in this level:
\begin{equation}
\label{eq:ai-bi}
  a_{n,i}:=0^{n-i}1^i \enspace, \quad b_{n,i}:=0^{n-i-1}1^i 0 \enspace.
\end{equation}
We omit the subscript $n$ whenever it is clear from the context and simply write $a_i$ or $b_i$ in this case.
In the lowest cycle $C_k$ between levels $k$ and $k+1$ we permute coordinates so that the vertices $b_k$ and $a_{k+1}$ are visited consecutively, and the vertex $b_{k+1}$ is not visited at all.
This is possible since permutation of coordinates is an automorphism of $Q_{n,[k,k+1]}$ and since there is a vertex in level~$k+1$ that is not visited by $C_k$ and any neighbor of this vertex in level~$k$ is visited by $C_k$.
Furthermore, in each of the other cycles $C_i$, $i=k+2,k+4\ldots,l-1$, we permute coordinates, independently for each cycle, so that the vertices $a_i$, $a_{i+1}$, $b_i$ and $b_{i+1}$ are visited consecutively.
This is possible since $C_i$ contains a subpath of length~3 starting at level~$i$ and any path of length~3 between two consecutive levels has the property that the coordinate changes are in different directions along the three edges.
We now modify the permuted cycles as follows:
In the lowest cycle $C_k$ we remove the edge $(b_k,a_{k+1})$ and replace it by the edge $(b_k,b_{k+1})$, thus including the previously omitted vertex $b_{k+1}$ in level~$k+1$.
In the intermediate cycles $C_i$, $i=k+2,k+4\ldots,l-3$, we remove the edges $(a_i,a_{i+1})$ and $(b_i,b_{i+1})$.
In the uppermost cycle $C_{l-1}$, we remove the edges $(a_{l-1},a_l)$ and $(b_{l-1},a_l)$, creating a new omitted vertex $a_l$ in level~$l$.
Finally, by adding the edges $(a_{i-1},a_i)$ and $(b_{i-1},b_i)$ for $i=k+2,k+4,\ldots,l-1$ we join the resulting paths to a single cycle $C$.
The cycle $C$ is saturating, as the missed vertices all lie in levels $k+1,k+3,\ldots,l$ by the condition $l\leq \lceil n/2\rceil$; i.e., they all belong to the same bipartite class.
\end{proof}

It remains to prove the last part~(iv) of Theorem~\ref{thm:sat}.
Before doing so we introduce another definition.
We say that a cycle $C$ in $Q_{n,[k,l]}$ contains a \emph{virtual 2-path} $(u,v,w)$, if $(u,v,w)$ is a 2-path of $Q_{n,[k,l]}$ where $u,w$ are in level~$l$ and $v$ is in level~$l-1$, the edge $(u,v)$ is in $C$, but the vertex $w$ is omitted by $C$, see Figure~\ref{fig:gluing-dim}.

\begin{proof}[Proof of Theorem~\ref{thm:sat}~(iv)]
We only consider the case $k\leq n-l$.
The case $k>n-l$ follows by symmetry.
We inductively prove the following strengthening of the result:
We additionally require that the only omitted vertices are in level~$l$, and if there are such vertices that the cycle contains a virtual 2-path.

For any fixed odd value of $l-k\geq 3$ we prove this statement by induction on $n$.
The induction basis for $n=l-k$, i.e., $k=0$ and $l=n$, is settled by the reflected Gray code $\Gamma_n$.

Observe that for any $n\geq l-k$ and $k=0$ this statement follows from Theorem~\ref{thm:sat}~(i).
As this cycle is obtained by trimming $\Gamma_n$ to levels $[0,l]$, the only omitted vertices are in level~$l$, and it contains a virtual 2-path:
Indeed, any omitted vertex in level~$l$ originates from replacing a path that leads from an upward vertex $x$ in level~$l-1$ to $s_{\Gamma_n}(x)$ in level~$l$, and via some additional vertices in levels $[l,n]$ back to $p_{\Gamma_n}(y)$ in level~$l$, where $y:=s_{\Gamma_{n,l-1}}(x)$, and then to the vertex $y$ in level~$l-1$, by the shorter path $(x,\up(x,y),y)$.
By Lemma~\ref{lem:GCkup}, this makes either $s_{\Gamma_n}(x)$ or $p_{\Gamma_n}(y)$ an omitted vertex in level~$l$, and creates a virtual 2-path in both cases:
If $\up(x,y)=s_{\Gamma_n}(x)$, then $p_{\Gamma_n}(y)$ is omitted and $(s_{\Gamma_n}(x),y,p_{\Gamma_n}(y))$ is a virtual 2-path.
Otherwise $\up(x,y)=p_{\Gamma_n}(y)$ and $s_{\Gamma_n}(x)$ is omitted and $(p_{\Gamma_n}(y),x,s_{\Gamma_n}(x))$ is a virtual 2-path.

For any odd $n\geq l-k$ and $k=n-l$ the statement holds by the additional assumption (that there is a Hamilton cycle in $Q_{2m+1,[m-c,m+1+c]}$ for all $m=c,c+1,\ldots,(n-1)/2$), and there are no omitted vertices at all.

We now assume that the statement holds for some $n\geq l-k$ and all $k$ in the range $0 \le k \le n-(l-k)$ where $(l-k)$ is fixed, and show that it also holds for $n+1$ and $1\leq k<(n+1)-l$. For the reader's convenience, the approach is illustrated in Figure~\ref{fig:gluing-dim}.

\begin{figure}
\centering
\includegraphics[scale=0.916]{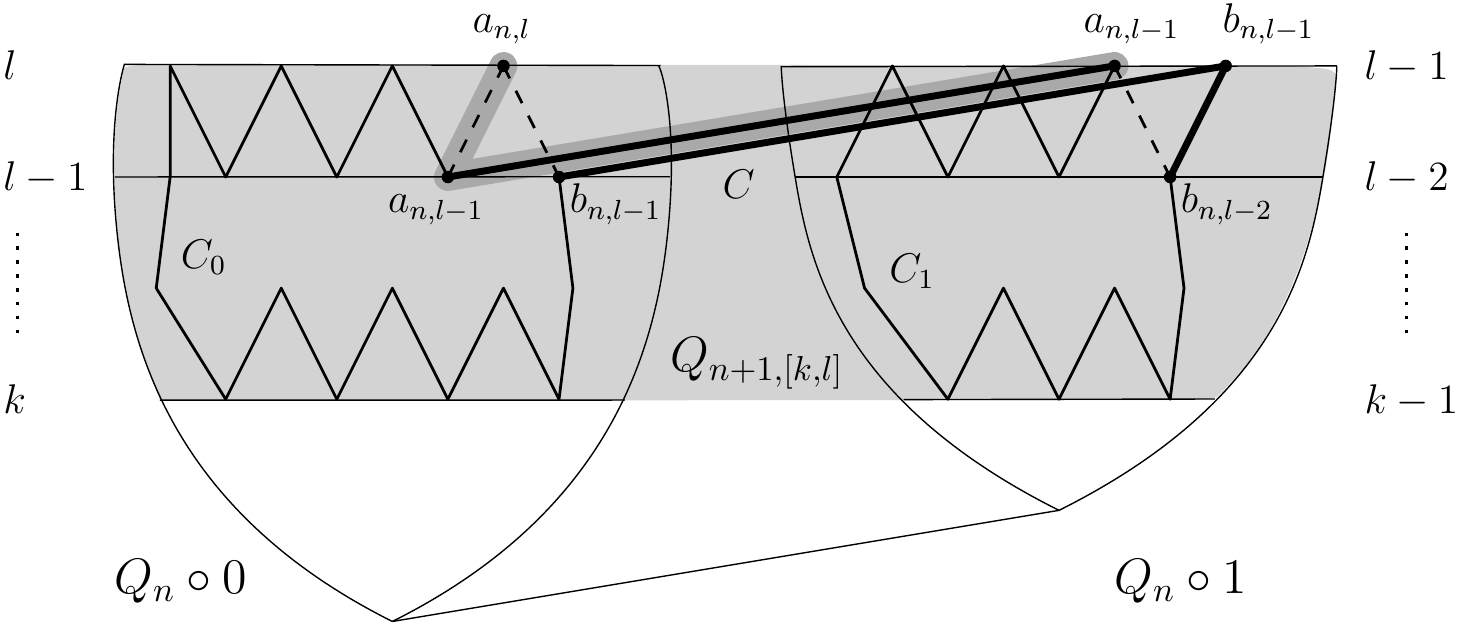} % scale down from 12pt to 11pt font size
\caption{Notations used in the proof of Theorem~\ref{thm:sat}~(iv).
The removed edges are dashed, the added edges are bold.
The virtual 2-path for the resulting cycle $C$ in $Q_{n+1,[k,l]}$ is highlighted in dark gray.}
\label{fig:gluing-dim}
\end{figure}

By induction we know that there is a saturating cycle $C_0$ in $Q_{n,[k,l]}$. Note that $k\leq n-l$; this cycle might actually be a Hamilton cycle.
We also know that there is a saturating cycle $C_1$ in $Q_{n,[k-1,l-1]}$ satisfying all the conditions of the theorem.
In particular, by \eqref{eq:deltaQ} and since $k-1<n-(l-1)$, there will be an omitted vertex $x$ in level~$l-1$ and a virtual 2-path.
For any level~$i$ of $Q_n$, consider the special vertices $a_{n,i}$ and $b_{n,i}$ defined in \eqref{eq:ai-bi}.
We permute coordinates in $C_0$ so that the vertices $a_{n,l-1}$, $a_{n,l}$ and $b_{n,l-1}$ are visited consecutively.
Moreover, we permute coordinates in $C_1$ so that the virtual 2-path is mapped to $(a_{n,l-1},b_{n,l-2},b_{n,l-1})$.
From the cycle $C_0$ we remove the edges $(a_{n,l-1},a_{n,l})$ and $(b_{n,l-1},a_{n,l})$, creating a new omitted vertex $a_{n,l}$ in level~$l$ of $Q_n$, and we attach a 0-bit to all vertices.
In the cycle $C_1$ we remove the edge $(b_{n,l-2},a_{n,l-1})$ and replace it by the edge $(b_{n,l-2},b_{n,l-1})$, thus including the previously omitted vertex $b_{n,l-1}$ in level~$l-1$ of $Q_n$, and we attach a 1-bit to all vertices.
We connect the resulting paths by adding the edges $(a_{n,l-1}\circ 0,a_{n,l-1}\circ 1)$ and $(b_{n,l-1}\circ 0,b_{n,l-1}\circ 1)$, creating a cycle $C$ in $Q_{n+1,[k,l]}$.
The only omitted vertices of $C$ are in level~$l$ of $Q_{n+1}$, so the cycle is saturating.
Moreover, $(a_{n,l-1}\circ 1,a_{n,l-1}\circ 0,a_{n,l}\circ 0)$ is a virtual 2-path in $C$.
This completes the proof.
\end{proof}

\section{Tight enumerations}
\label{sec:enum}

In this section we present the proofs for all five cases in Theorem~\ref{thm:enum}.
For this we introduce some definitions.

We call a sequence $C$ that contains each vertex of $Q_{n,[k,l]}$ exactly once an \emph{enumeration of the vertices of $Q_{n,[k,l]}$}.
Recall that $s_C(\ell(C))=f(C)$.
The \emph{total distance} of the enumeration $C$ is $\td(C):=\sum_{u \in C} d(u,s_C(u))$.
As any two consecutive bitstrings in any enumeration have distance at least 1 and distance at least 2 if they are from the same bipartite class of $Q_{n,[k,l]}$, we have
\begin{equation}
\label{eq:tdlb}
  \td(C)\geq v(Q_{n,[k,l]})+\delta(Q_{n,[k,l]}) \enspace,
\end{equation}
where $v(Q_{n,[k,l]})$ and $\delta(Q_{n,[k,l]}$ are defined in \eqref{eq:v-delta}.
A \emph{tight enumeration} is an enumeration for which the lower bound \eqref{eq:tdlb} is attained.
Clearly, an enumeration is tight if, and only if, it has only distance-1 and distance-2 steps, and all the distance-2 steps are within the same partition class of $Q_{n,[k,l]}$. This will be the larger partition class, and there will be exactly $\delta(Q_{n,[k,l]})$ such distance-2 steps in this class.

\subsection{Proof of Theorem~\ref{thm:enum}}

For $k=l$ we have $v(Q_{n,[k,k]})=\delta(Q_{n,[k,k]})=\binom{n}{k}$ and a tight enumeration of all weight~$k$ bitstrings of length~$n$ is given by $\Gamma_{n,k}$, by Lemma~\ref{lem:GCk}. Note that this is exactly the proof of Theorem~\ref{thm:comb} presented in \cite{MR0349274}.

We now proceed to prove cases (i)--(iv) of Theorem~\ref{thm:enum}.
The proofs are very similar to the corresponding proofs for cases (i)--(iv) of Theorem~\ref{thm:sat}, and the key ideas are the same.
There are however various technical subtleties to overcome.

\begin{proof}[Proof of Theorem~\ref{thm:enum}~(ii)]
We trim the reflected Gray code $\Gamma_n$ to levels $[k,l]$, but in a slightly different fashion from the proof of Theorem~\ref{thm:trim}.
Specifically, subpaths of $\Gamma_n$ within the levels $[k,l]$ remain unchanged, including the orientation.
Moreover, each subpath $P$ of $\Gamma_n$ that descends from a downward vertex $x$ in level~$k$ returns back to level~$k$ at the vertex $y:=s_{\Gamma_{n,k}}(x)$, and we can replace $P$ by the distance-2 step $(x,y)$ by Lemma~\ref{lem:GCk}.
Similarly, each subpath $P$ of $\Gamma_n$ that ascends from an upward vertex $x$ in level~$l$ returns back to level~$l$ at the vertex $y:=s_{\Gamma_{n,l}}(x)$, and we replace $P$ by the distance-2 step $(x,y)$.
This yields an enumeration $C$ of all vertices of $Q_{n,[k,l]}$.
Moreover, since $l-k$ is even, levels $k$ and $l$ of $Q_{n,[k,l]}$ belong to the same bipartite class, so all distance-2 steps of $C$ are within the same bipartite class, and the remaining steps are distance-1 steps.
It follows that $C$ is a tight enumeration.
\end{proof}

\begin{proof}[Proof of Theorem~\ref{thm:enum}~(i)]
We only consider the case $0=k<l\leq n$, the other case follows by symmetry.
The proof proceeds very similarly as the proof of part~(ii), but we only trim the subpaths of $\Gamma_n$ ascending above level~$l$.
Thus no trimming is applied at the bottom level~0.
This yields an enumeration $C$ of all vertices of $Q_{n,[0,l]}$.
Moreover, all distance-2 steps of $C$ are within the same bipartite class in level~$l$, and the remaining steps are distance-1 steps, implying that $C$ is a tight enumeration.
\end{proof}

Theorem~\ref{thm:enum}~(iiia) is an immediate consequence of our next result, Theorem~\ref{thm:enum2}.
This theorem is the analogue of Theorem~\ref{thm:sat2} for tight enumerations.
To state the result we need to introduce some notation:
We say that an enumeration $C$ of the vertices of $Q_{n,[k,k+1]}$ contains a \emph{3-path} if $C$ contains a contiguous subsequence $(u,v,w,x)$ of vertices that form a path of length~3 in $Q_{n,[k,k+1]}$ (in this order) where $u,w$ are in level~$k$, and $v,x$ are in level~$k+1$.
Furthermore, we say that $C$ contains a \emph{switched 2-path} if $C$ contains a contiguous subsequence $(u,v,w)$ of vertices such that $(w,u,v)$ forms a path of length~2 in $Q_{n,[k,k+1]}$ where $u$ is in level~$k$ and $v,w$ are in level~$k+1$.

\begin{theorem}
\label{thm:enum2}
For any $n\geq 3$ and $0\leq k\leq n-1$ there is a tight enumeration of the vertices in $Q_{n,[k,k+1]}$.
Moreover, it contains a 3-path if $1\leq k\leq n-2$, and a switched 2-path if $k<\lfloor n/2\rfloor$.
\end{theorem}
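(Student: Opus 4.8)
\textbf{Proof plan for Theorem~\ref{thm:enum2}.}
The plan is to mimic the inductive construction behind Theorem~\ref{thm:sat2}, but to track the distance-2 steps explicitly so that the resulting enumeration is tight, and additionally to carry along the two structural features (a 3-path and, in the appropriate range, a switched 2-path) through the induction so that the gluing step has the ``connectors'' it needs. We induct on $n$, with the base cases $n=3$ (i.e.\ $k\in\{0,1,2\}$) checked directly: for $k=0$ and $k=n-1=2$ we can trim $\Gamma_3$ to two consecutive levels exactly as in the proofs of Theorem~\ref{thm:enum}~(i)/(ii) above (noting that for $l-k=1$ the levels $k$ and $k+1$ lie in \emph{different} bipartite classes, so here we must be careful and instead read off an explicit short enumeration of $Q_{3,[0,1]}$, $Q_{3,[1,2]}$ and verify the 3-path/switched-2-path conditions by hand), and for $k=1$, $n=3$ the graph $Q_{3,[1,2]}$ is a $6$-cycle, whose enumeration trivially contains a 3-path and (since $1=k<\lfloor 3/2\rfloor=1$ fails) needs no switched 2-path. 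The degenerate middle case $n=2k+1$, $l=k+1$ is the middle levels theorem (Theorem~\ref{thm:mlc}): there $\delta=0$, a tight enumeration is just a Hamilton cycle, and one has to additionally extract a 3-path and a switched 2-path from it, which is easy because a Hamilton cycle of length $\geq 4$ through a bipartite graph with a vertex of degree $\geq 3$ automatically contains both configurations up to choosing the right contiguous window and using an automorphism (coordinate permutation) to place it on the named vertices $a_{n,i}, b_{n,i}$ from \eqref{eq:ai-bi}.

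For the inductive step $n\to n+1$ with $1\le k\le n-1$ I would split $Q_{n+1,[k,k+1]}$ along the last coordinate into $Q_{n,[k,k+1]}\circ 0$ and $Q_{n,[k-1,k]}\circ 1$ (the standard decomposition used for Theorem~\ref{thm:sat2} and in the proof of Theorem~\ref{thm:sat}~(iv) above). By induction there is a tight enumeration $C_0$ of $Q_{n,[k,k+1]}$ and a tight enumeration $C_1$ of $Q_{n,[k-1,k]}$; note that the bitstrings of level $k$ in the second block are exactly the $Q_{n,[k-1,k]}$-top-level vertices, which match up with level-$k$ vertices of the first block after appending $0$ resp.\ $1$. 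The gluing works as in the proof of Theorem~\ref{thm:sat}~(iv): use coordinate permutations (automorphisms of each block) to align the 3-path of $C_0$ and a suitable 2-path of $C_1$ onto the common vertices $a_{n,k},a_{n,k+1},b_{n,k},b_{n,k+1}$ of \eqref{eq:ai-bi}; then delete one edge from each (the middle edge of the 3-path in $C_0$, and the edge of $C_1$ incident to the shared level-$k$ vertex), append the bits $0$ and $1$, and reconnect along two new last-coordinate edges $(\,\cdot\circ 0,\cdot\circ 1)$. Because every step inside $C_0$ and $C_1$ was distance $1$ or distance $2$ within the larger class, and because the only steps we add or modify are distance-$1$ steps along the last coordinate (plus possibly converting the deleted 3-path edge into a distance-$2$ step that lands in the larger class — one must check the parity bookkeeping here using \eqref{eq:deltaQ}), the glued enumeration $C$ of $Q_{n+1,[k,k+1]}$ is again tight. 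Finally, one has to re-establish the invariants: the new cycle $C$ must again contain a 3-path (take a contiguous window near one of the two new last-coordinate edges, where level $k$ and level $k+1$ vertices alternate), and if $k<\lfloor(n+1)/2\rfloor$ it must contain a switched 2-path (produced exactly at the junction where $C_1$'s omitted top-level vertex was reinserted, as in the ``virtual 2-path'' mechanism of the proof of Theorem~\ref{thm:sat}~(iv)). The ranges $1\le k\le n-2$ for the 3-path and $k<\lfloor n/2\rfloor$ for the switched 2-path are exactly what is needed for these windows to exist, and one checks they are preserved: $1\le k\le n-1$ in dimension $n+1$ gives $1\le k\le (n+1)-2$ when $k\le n-1$, and $k<\lfloor n/2\rfloor$ implies $k<\lfloor (n+1)/2\rfloor$.

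\textbf{Main obstacle.} The delicate point is not the topology of the gluing but the tightness accounting: after deleting edges and reconnecting, I must verify that the \emph{number} of distance-$2$ steps is exactly $\delta(Q_{n+1,[k,k+1]})=\bigl|\binom{n}{k-1}-\binom{n}{k+1}\bigr|$ (from \eqref{eq:deltaQ} with $l-k=1$ odd) and that \emph{all} of them sit in the larger bipartite class — in particular that the one edge I remove from the middle of $C_0$'s 3-path either is reinstated (so the count is unchanged, giving $\delta(C)=\delta(C_0)+\delta(C_1)$ automatically when the $\delta$'s add correctly) or is replaced by a distance-$2$ step of the correct parity. This requires keeping track of which of the two blocks carries the ``excess'' vertices and on which level, i.e.\ whether $\binom{n}{k-1}\lessgtr\binom{n}{k+1}$, and aligning the two omitted-vertex pockets so that reinsertion via the switched-2-path / 3-path never creates a distance-$2$ step across classes. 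A secondary nuisance is that for $l-k=1$ the two levels are in different classes, so unlike cases (i) and (ii) of Theorem~\ref{thm:enum} one cannot simply trim $\Gamma_n$; the whole argument genuinely needs the inductive gluing together with Theorem~\ref{thm:mlc} at the diagonal, and the base cases must be handled by explicit small enumerations rather than by a uniform trimming argument. I would therefore write the proof so that the invariant carried through the induction is precisely: ``a tight enumeration of $Q_{n,[k,k+1]}$, all of whose distance-$2$ steps lie in the larger class, which contains a 3-path if $1\le k\le n-2$ and a switched 2-path if $k<\lfloor n/2\rfloor$,'' and then the induction step is a bounded, checkable case analysis on the sign of $\binom{n}{k-1}-\binom{n}{k+1}$.
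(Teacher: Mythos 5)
Your plan follows essentially the same route as the paper's proof: induction on $n$ via the split into $Q_{n,[k,k+1]}\circ 0$ and $Q_{n,[k-1,k]}\circ 1$, with the 3-path and switched 2-path carried as inductive invariants serving as the gluing connectors, and with the base cases $k=0$ (trimming) and $n=2k+1$ (Theorem~\ref{thm:mlc}). Your ``main obstacle'' evaporates in the paper's version because one first reduces by symmetry to $k\leq\lfloor(n-1)/2\rfloor$, so every distance-2 step --- including the one connector edge $(a_{n,k+1}\circ 0,a_{n,k}\circ 1)$, which is necessarily a distance-2 step rather than a last-coordinate flip --- lies in the top level $k+1$ and hence in the larger class, and tightness then follows from the characterization stated before the theorem without any counting or case analysis on the sign of $\binom{n}{k-1}-\binom{n}{k+1}$.
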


To prove Theorem~\ref{thm:enum2}, we use again an inductive approach very similar to that from \cite{MR3759914}.

\begin{proof}
By symmetry, we may assume that $k\leq \lfloor (n-1)/2 \rfloor$.
We prove the statement by induction on $n$.

For any $n\geq 3$ and $k=0$ such a tight enumeration trivially exists, see Theorem~\ref{thm:enum}~(i).
Moreover, as the vertex $0^n$ is adjacent to all vertices in level~$1$, this enumeration must contain a switched 2-path.
Note also that the statement holds for all odd $n\geq 3$ and $k=\lfloor n/2\rfloor$ by Theorem~\ref{thm:mlc}.
Indeed, since the corresponding enumeration is a Hamilton cycle in $Q_{n,[k,k+1]}$, it must contain a 3-path.

These observations settle in particular the base cases $n=3$, $k\in\{0,1\}$.

For the induction step we assume that the statement holds for some $n\geq 3$ and all values of $k$ in the range $0\le k\leq \lfloor (n-1)/2 \rfloor$, and we prove that it also holds for $n+1$ and all $1\leq k<\lfloor n/2\rfloor$.
In any level~$i$ of $Q_n$, we consider two special vertices $a_{n,i}$ and $b_{n,i}$ as defined in \eqref{eq:ai-bi}.
By induction, there is a tight enumeration $C_0$ of the vertices of $Q_{n,[k,k+1]}$ that contains a 3-path, as $k\geq 1$.
We also know that there is a tight enumeration $C_1$ of the vertices of $Q_{n,[k-1,k]}$ that contains a switched 2-path, as $k-1<\lfloor n/2\rfloor$.
In $C_0$ we permute coordinates so that the 3-path is mapped to $(a_{n,k},a_{n,k+1},b_{n,k},b_{n,k+1})$.
Moreover, in $C_1$ we permute coordinates so that the switched 2-path is mapped to $(b_{n,k-1},b_{n,k},a_{n,k})$.
This is possible because $(b_{n,k},b_{n,k-1},a_{n,k})$ is a path of length~2 in $Q_n$.

To make the description of the following modifications easier, let us think about $C_0$ and $C_1$ as cycles in $Q_{n,[k,k+1]}$ and $Q_{n,[k-1,k]}$ that use some additional distance-2 edges, not actually present in $Q_n$, between vertices in the upper levels $k+1$ or $k$, respectively.
From $C_0$ we remove the distance-1 edge $(b_{n,k},a_{n,k+1})$ and attach a 0-bit to all vertices.
From $C_1$ we remove the distance-2 edge $(a_{n,k},b_{n,k})$ and attach a 1-bit to all vertices.
We connect the resulting paths by adding the distance-2 edge $(a_{n,k+1}\circ 0,a_{n,k}\circ 1)$ and the distance-1 edge $(b_{n,k}\circ 0,b_{n,k}\circ 1)$.
In this way, we clearly obtain an enumeration $C$ of all vertices of $Q_{n+1,[k,k+1]}$.
Moreover, all distance-2 steps of $C$ are within the same bipartite class in level~$k+1$, and the remaining steps are distance-1 steps, implying that $C$ is a tight enumeration.
Furthermore, $(b_{n,k-1}\circ 1,b_{n,k}\circ 1,b_{n,k}\circ 0,b_{n,k+1}\circ 0)$ is a 3-path in $C$, and $(a_{n,k}\circ 0,a_{n,k+1}\circ 0,a_{n,k}\circ 1)$ is a switched 2-path.
This completes the proof.
\end{proof}

\begin{proof}[Proof of Theorem~\ref{thm:enum}~(iiia)]
Follows immediately from Theorem~\ref{thm:enum2}.
\end{proof}

The proof of Theorem~\ref{thm:enum}~(iiib) is analogous to the proof of Theorem~\ref{thm:sat}~(iii), and it proceeds by gluing together several cycles obtained from Theorem~\ref{thm:enum2}.
We include the proof for the sake of completeness, as there are some minor technical differences.
In particular, to make the proof work we will need the 3-path and the switched 2-path guaranteed by Theorem~\ref{thm:enum2}.

\begin{proof}[Proof of Theorem~\ref{thm:enum}~(iiib)]
We only consider the case $1\leq k<l\leq \lceil n/2\rceil$, the other case follows by symmetry.
By Theorem~\ref{thm:enum2}, there is a tight enumeration $C_i$ of all vertices in $Q_{n,[i,i+1]}$ for every $i=k,k+2,\ldots,l-1$.
The only distance-2 steps of each $C_i$ are in level~$i+1$ since $l\leq \lceil n/2 \rceil$.
It is again useful to think of $C_i$ as a cycle in $Q_{n,[i,i+1]}$ that uses some additional distance-2 edges, not actually present in $Q_n$, between vertices in the upper level~$i+1$.
We now show how to join these $(l-k+1)/2\geq 2$ cycles to a single cycle that enumerates all vertices of $Q_{n,[k,l]}$.
In any level~$i$ of $Q_n$, we consider two special vertices $a_{n,i}=a_i$ and $b_{n,i}=b_i$ as defined in \eqref{eq:ai-bi}.

In the lowest cycle $C_k$ between levels $k$ and $k+1$ we permute coordinates so that the switched 2-path is mapped to $(b_k,b_{k+1},a_{k+1})$.
Furthermore, in each of the other cycles $C_i$, $i=k+2,k+4\ldots,l-1$, we permute coordinates, independently for each cycle, so that the 3-path is mapped to $(a_i,a_{i+1},b_i,b_{i+1})$.
We now modify the permuted cycles as follows:
In the lowest cycle $C_k$ we remove the distance-2 edge $(b_{k+1},a_{k+1})$.
In the intermediate cycles $C_i$, $i=k+2,k+4\ldots,l-3$, we remove the distance-1 edges $(a_i,a_{i+1})$ and $(b_i,b_{i+1})$.
In the uppermost cycle $C_{l-1}$, we remove the distance-1 edges $(a_{l-1},a_l)$ and $(b_{l-1},b_l)$, and we add the distance-2 edge $(a_l,b_l)$.
Finally, we connect the resulting paths by adding the distance-1 edges $(a_{i-1},a_i)$ and $(b_{i-1},b_i)$ for $i=k+2,k+4,\ldots,l-1$.
In this way, we obtain an enumeration $C$ of all vertices of $Q_{n,[k,l]}$.
Moreover, all distance-2 steps of $C$ are within the same bipartite class in levels $k+1,k+3,\ldots,l$, and the remaining steps are distance-1 steps, implying that $C$ is a tight enumeration.
\end{proof}

\begin{proof}[Proof of Theorem~\ref{thm:enum}~(iv)]
This proof works analogously to the proof of Theorem~\ref{thm:sat}~(iv) by induction over the dimension of the cube.
As in the proof of Theorem~\ref{thm:enum2}, we join pairs of `cycles' along a 3-path and a switched 2-path.
We leave the details to the reader.
\end{proof}

\section{Efficient algorithms}
\label{sec:algo}

In the following we first state some known facts about the reflected Gray code $\Gamma_n$, which will allow us to formulate a loopless algorithm to generate trimmed Gray codes, proving part~(a) of Theorem~\ref{thm:algo}; that is, algorithms for cases~(i) and (ii) of Theorems~\ref{thm:sat} and \ref{thm:enum}.
We finally describe an efficient algorithm for glued Gray codes, proving part~(b) of Theorem~\ref{thm:algo}; that is, algorithms for case~(iii) of Theorems~\ref{thm:sat} and \ref{thm:enum}.

\subsection{Explicit description of trimmed Gray codes}
\label{sec:expl}

Let $x$ be a vertex of $Q_n$ in level~$k$.
It is well-known \cite[Section~7.2.1.1]{MR3444818} that the successor of $x$ in $\Gamma_n$ is given by
\begin{equation}
\label{eq:GCsuc}
  s_{\Gamma_n}(x) = \begin{cases} x\oplus e_1     & \text{if $k$ is even} \enspace, \\
                                  x\oplus e_{i+1} & \text{if $k$ is odd} \enspace,
                    \end{cases}
\end{equation}
where $i\geq 1$ is the smallest integer with $x_i=1$, except for $x=\ell(\Gamma_n)=0^{n-1}1$ where we set $i:=n-1$, and $e_j:=0^{j-1}10^{n-j}$ for all $j=1,2,\ldots,n$.

From \eqref{eq:GCsuc} we immediately obtain the following characterization of upward vertices, and consequently also of downward vertices, in $\Gamma_n$.

\begin{lemma}
\label{lem:GCupw}
The vertex $x=(x_1,x_2,\ldots,x_n)$ in level~$k$ of $Q_n$ is an upward vertex in $\Gamma_n$ if and only if $k$ is even and $x_1=0$ or $k$ is odd and $x_{i+1}=0$, where $i$ is as defined in \eqref{eq:GCsuc}.
\end{lemma}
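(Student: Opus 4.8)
The statement to prove is Lemma~\ref{lem:GCupw}, which characterizes upward vertices in $\Gamma_n$ directly from the formula for the successor in \eqref{eq:GCsuc}.

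The plan is to simply read off the characterization from \eqref{eq:GCsuc}, tracking how the weight changes under a single bitflip. Recall that $x$ is an upward vertex precisely when $s_{\Gamma_n}(x)$ has weight $k+1$, i.e., the bit that gets flipped when passing from $x$ to $s_{\Gamma_n}(x)$ is a $0$-bit that becomes a $1$-bit (it cannot be anything else, since $s_{\Gamma_n}(x)$ and $x$ differ in exactly one coordinate, so the weight goes up by one or down by one — never stays the same). So the whole proof is a case distinction on the parity of $k$, directly mirroring the two cases in \eqref{eq:GCsuc}.

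First I would handle the case where $k$ is even. Then \eqref{eq:GCsuc} gives $s_{\Gamma_n}(x) = x \oplus e_1$, i.e., the flipped coordinate is the first one. This increases the weight exactly when $x_1 = 0$, and decreases it when $x_1 = 1$; hence $x$ is upward iff $x_1 = 0$, as claimed. Second, for $k$ odd, \eqref{eq:GCsuc} gives $s_{\Gamma_n}(x) = x \oplus e_{i+1}$ where $i$ is the least index with $x_i = 1$ (with the convention $i := n-1$ for the last vertex $x = 0^{n-1}1$). The flipped coordinate is position $i+1$, so the weight increases exactly when $x_{i+1} = 0$ and decreases when $x_{i+1} = 1$; hence $x$ is upward iff $x_{i+1} = 0$. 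One should note that $x = 0^{n-1}1$ has weight $1$, which is odd, and here $i = n-1$ so $x_{i+1} = x_n = 1 \neq 0$, consistently predicting that this vertex is downward — which it is, since $\Gamma_n$ ends at level $1$ and $f(\Gamma_n) = 0^n$ has weight $0$. So the convention is compatible with the stated criterion, and no separate exceptional case is needed in the phrasing of the lemma. I would briefly remark on this to avoid any appearance of a gap.

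There is essentially no obstacle here: the lemma is an immediate corollary of \eqref{eq:GCsuc}, and the only thing requiring a moment's care is the boundary vertex $x = \ell(\Gamma_n) = 0^{n-1}1$ and checking that the parity bookkeeping ("flipping a $0$ raises weight, flipping a $1$ lowers it") is stated cleanly. The characterization of downward vertices then follows by complementation: since every vertex of $\Gamma_n$ that is not the last vertex is either an upward or a downward vertex (the weight of consecutive vertices differs by exactly one), $x$ is a downward vertex iff it fails the stated condition, i.e., iff $k$ is even and $x_1 = 1$, or $k$ is odd and $x_{i+1} = 1$.
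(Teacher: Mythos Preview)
Your proposal is correct and matches the paper's approach exactly: the paper does not give a standalone proof of this lemma but simply states that it follows immediately from \eqref{eq:GCsuc}, which is precisely the parity case-split you carry out. Your explicit check of the boundary vertex $x=0^{n-1}1$ is a welcome addition but not something the paper spells out.
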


For any $0\leq k \leq n$ we let $u_{n,k}$ and $d_{n,k}$, respectively, denote the number of upward or downward vertices of $\Gamma_n$ in level~$k$ of $Q_n$.
Recall that by Lemma~\ref{lem:GCksub} all vertices in the sequences $\up(\Gamma_{n,k})$ and $\down(\Gamma_{n,k})$ are distinct, and therefore the length of these sequences is given by $u_{n,k}$ and $d_{n,k}$, respectively.
From Lemma~\ref{lem:GCupw} we obtain that
\begin{equation}
\label{eq:udnk}
  u_{n,k}=\binom{n-1}{k} \quad \text{and} \quad d_{n,k}=\binom{n}{k}-\binom{n-1}{k}=\binom{n-1}{k-1} \enspace.
\end{equation}

Next, we state explicit descriptions for the successor of a vertex in $\Gamma_{n,k}$ given by Tang and Liu, see \cite[Theorem 4]{MR0349274}.
Note that we use a slightly different notation with the most significant bit at the last position.

\begin{lemma}[\cite{MR0349274}]
\label{lem:upexp}
Let $n\geq 1$ and $0<k<n$ and let $x$ be an upward vertex in level~$k$ of $Q_n$.
Then we have $s_{\Gamma_{n,k}}(x)=x \oplus e_{p_1} \oplus e_{p_2}$ and $\up(x,s(x))=x \oplus e_{p_1}$ where
\begin{equation*}
  p_1=\begin{cases} i-1 & \text{if $k$ is even} \\
                    i+1 & \text{if $k$ is odd}
      \end{cases} \enspace, \quad p_2=i \enspace,
\end{equation*}
and $i\geq 1$ is the smallest integer with $x_i=1$.
\end{lemma}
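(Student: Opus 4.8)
The plan is to read off the two flipped positions $p_1,p_2$ by combining the distance bound of Lemma~\ref{lem:GCk} with the explicit successor rule \eqref{eq:GCsuc} and the ``up-vertex lies on $\Gamma_n$'' statement of Lemma~\ref{lem:GCkup}. Write $y:=s_{\Gamma_{n,k}}(x)$ and let $i$ be the smallest index with $x_i=1$. By Lemma~\ref{lem:GCk} we have $d(x,y)=2$, so $y=x\oplus e_a\oplus e_b$ for two positions $a,b$; since $x$ and $y$ have the same weight, exactly one of them, say $a$, satisfies $x_a=0$ and the other satisfies $x_b=1$. Then $x\oplus e_a$ lies in level $k+1$, is adjacent to $x$ (flip bit $a$) and to $y$ (flip bit $b$), and hence equals the unique common upper neighbour, i.e.\ $\up(x,y)=x\oplus e_a$. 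Thus it suffices to show $a=p_1$ and $b=p_2$; the asserted identity $\up(x,s(x))=x\oplus e_{p_1}$ then follows immediately.

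To determine $a=p_1$ I would first pin down which of the two alternatives in Lemma~\ref{lem:GCkup} actually occurs, in terms of the parity of $k$: that $\up(x,y)=s_{\Gamma_n}(x)$ when $k$ is odd, and $\up(x,y)=p_{\Gamma_n}(y)$ when $k$ is even (this is the sharpening of Lemma~\ref{lem:GCkup} announced in the remark following it). If $k$ is odd, then since $x$ is upward, Lemma~\ref{lem:GCupw} gives $x_{i+1}=0$ and \eqref{eq:GCsuc} gives $s_{\Gamma_n}(x)=x\oplus e_{i+1}$, so $a=i+1=p_1$; the remaining position is then forced, since $y$ lies one level below $\up(x,y)=x\oplus e_{i+1}$ and has weight $k$, which by \eqref{eq:GCsuc} (or simply by $d(x,y)=2$ together with the known flip of bit $i+1$) gives $b=i=p_2$. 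If $k$ is even, Lemma~\ref{lem:GCupw} gives $x_1=0$, hence $i\ge2$, and $p_{\Gamma_n}(y)=\up(x,y)=x\oplus e_a$ means that $s_{\Gamma_n}(x\oplus e_a)=y$ is a downward step out of level $k+1$; applying \eqref{eq:GCsuc} to the odd-weight vertex $x\oplus e_a$, whose smallest $1$-bit is $\min(a,i)$, a short case distinction using $x_1=x_a=0$ and $x_i=1$ forces $a=i-1$ and simultaneously $b=i$, i.e.\ $p_1=i-1$, $p_2=i$.

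The key step, and the main obstacle, is establishing the parity dichotomy above; equivalently, in the even case it amounts to excluding the a priori possibility $a>i$, which genuinely encodes the local behaviour of the reflected Gray code around $x$ (the walk $x\to x\oplus e_1\to\cdots$ wanders through higher levels but first returns to level $k$ exactly at $x\oplus e_{i-1}\oplus e_i$). I would prove it by induction on $n$ along the recursion \eqref{eq:GCk2}, exactly as in the proofs of Lemmas~\ref{lem:GCk}, \ref{lem:GCksub} and \ref{lem:GCkup}: the base cases $n=2,3$ are read off from Figure~\ref{fig:G2-3}; for a generic upward vertex of the form $x'\circ0$ in the first half or $x''\circ1$ in the second half the statement is inherited from the induction hypothesis, since appending a fixed bit changes neither the weight $k$ nor the index $i$, and hence neither $p_1$ nor $p_2$; and the only real work is in the few boundary vertices where the successor in $\Gamma_{n+1,k}$ differs from the one predicted by $\Gamma_{n,\cdot}$, namely $\ell(\Gamma_{n,k})\circ0$, the last vertex $f(\Gamma_{n,k-1})\circ1$ of the second half, and the special vertex $\ell(\Gamma_n)\circ0=0^{n-1}10$, each of which is checked directly using the explicit first/last vertices \eqref{eq:GCkFL} (one finds that most of these are in fact downward vertices and so require nothing). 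This is the same bookkeeping that already appeared in the proof of Lemma~\ref{lem:GCksub}, and it is where the genuine effort lies; once the dichotomy is in place, the formulas for $p_1$ and $p_2$ drop out of \eqref{eq:GCsuc} as above. Alternatively, one can bypass Lemma~\ref{lem:GCkup} entirely and prove the whole statement by this same induction on $n$, which is essentially the original argument of Tang and Liu~\cite{MR0349274}.
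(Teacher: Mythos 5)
First, a point of reference: the paper does not prove Lemma~\ref{lem:upexp} at all — it quotes it from \cite[Theorem~4]{MR0349274} — so your proposal has to stand on its own. Its skeleton (induction on $n$ along \eqref{eq:GCk2}) is the right one, but the central step fails. You claim that once the parity dichotomy refining Lemma~\ref{lem:GCkup} is established, the formulas for $p_1,p_2$ drop out of \eqref{eq:GCsuc} by a local case distinction. They do not. In the odd case the dichotomy identifies only the \emph{added} bit: $\up(x,y)=s_{\Gamma_n}(x)=x\oplus e_{i+1}$ gives $p_1=i+1$, but the removed bit $b$ could a priori be any $1$-bit of $x$; neither $d(x,y)=2$ nor the fact that $y$ lies one level below $x\oplus e_{i+1}$ forces $b=i$, because $\Gamma_n$ may wander far above level $k+1$ before returning to level $k$ at $y$. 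In the even case the situation is worse: writing $\up(x,y)=x\oplus e_a=p_{\Gamma_n}(y)$ and applying \eqref{eq:GCsuc} to $x\oplus e_a$ yields $b=\min(a,i)+1$ together with the requirement $x_b=1$, and this is satisfied not only by $(a,b)=(i-1,i)$ but also by every $a>i+1$ with $x_a=0$ and $b=i+1$, whenever $x_{i+1}=1$. For instance, for $x=01100$ (so $k=2$, $i=2$) both $y=10100$ (the true successor) and $y=01010$ pass your local test: in each case the common upper neighbour of $x$ and $y$ is the $\Gamma_5$-predecessor of $y$. What is missing is the global information about which level-$k$ vertex comes next in $\Gamma_n$ — and that is precisely the content of the lemma, so it cannot be recovered from the dichotomy by local reasoning. (Note also that in the paper \eqref{eq:upnext} is \emph{derived from} Lemma~\ref{lem:upexp}, so you must indeed prove the dichotomy independently, as you say.)

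The fallback you sketch — proving the full statement by induction on $n$, as in the proofs of Lemmas~\ref{lem:GCk}, \ref{lem:GCksub} and \ref{lem:GCkup} — is the viable route, but your description of the inheritance step is wrong. For a vertex $x''\circ 1$ in the second half $\rev(\Gamma_{n,k-1})\circ 1$ of \eqref{eq:GCk2}, appending the $1$ raises the weight from $k-1$ to $k$, so the parity that selects the case of the formula flips, and the reversal turns successors of $\Gamma_{n,k-1}$ into predecessors. The statement therefore does not transfer ``because neither the weight nor $i$ changes''; one needs a companion predecessor formula (or the reversal identities \eqref{eq:sprev} and \eqref{eq:uprev}) and then the boundary bookkeeping that the paper carries out in Lemmas~\ref{lem:GCksub} and~\ref{lem:GCkup}. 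As written, the proposal identifies the correct strategy but leaves both the local derivation and half of the inductive step unestablished.
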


Combining \eqref{eq:GCsuc} and Lemma~\ref{lem:upexp} yields that for any upward vertex $x$ in level~$k$, $0<k<n$, and for $y:=s_{\Gamma_{n,k}}(x)$ we have
\begin{equation}
\label{eq:upnext}
  \up(x,y) = \begin{cases} p_{\Gamma_n}(y) & \text{if $k$ is even} \enspace, \\
                           s_{\Gamma_n}(x) & \text{if $k$ is odd} \enspace,
             \end{cases}
\end{equation}
a further strengthening of Lemma~\ref{lem:GCkup} and of the first part of Lemma~\ref{lem:GCksub}.

For downward vertices, the description of successors in $\Gamma_{n,k}$, given by the next lemma, is more complicated.
The reason is that successors of some downward vertices change when we construct $\Gamma_{n+1,k}$ from $\Gamma_{n,k}$ and $\Gamma_{n,k-1}$ as described by \eqref{eq:GCk2}, recall the proof of Lemma~\ref{lem:GCksub}.
For the same reason, unfortunately an analogous property as \eqref{eq:upnext} does not hold for downward vertices.

\begin{lemma}[\cite{MR0349274}]
\label{lem:downexp}
Let $n\geq 1$ and $0<k<n$ and let $x$ be a downward vertex in level~$k$ of $Q_n$.
Then we have $s_{\Gamma_{n,k}}(x)=x \oplus e_{p_1} \oplus e_{p_2}$ and $\down(x,s(x))=x \oplus e_{p_1}$ where
\begin{alignat*}{3}
  &p_1=i-2 &&\text{ and } p_2=i   \quad\quad &&\text{if } k\not\equiv i \bmod 2 \enspace, \\
  &p_1=n   &&\text{ and } p_2=i   \quad\quad &&\text{if } k\equiv i \bmod 2 \text{ and } j=n \enspace, \\
  &p_1=i-1 &&\text{ and } p_2=j+1 \quad\quad &&\text{if } k\equiv i \bmod 2, j<n \text{ and } x_{j+1}=0 \enspace, \\
  &p_1=j+1 &&\text{ and } p_2=i   \quad\quad &&\text{if } k\equiv i \bmod 2, j<n \text{ and } x_{j+1}=1 \enspace,
\end{alignat*}
$i\geq 1$ is the smallest integer with $x_i=0$, and $j>i$ is the smallest integer with $x_j=1$.
\end{lemma}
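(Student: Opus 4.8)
The statement is Theorem~4 of \cite{MR0349274}; for self-containment I would reprove it within the framework above, by induction on $n$ based on the recursion \eqref{eq:GCk2}, $\Gamma_{n+1,k}=(\Gamma_{n,k}\circ 0,\rev(\Gamma_{n,k-1})\circ 1)$, exactly as in the proofs of Lemmas~\ref{lem:GCk}, \ref{lem:GCksub} and \ref{lem:GCkup}. Since the second half of \eqref{eq:GCk2} is a \emph{reversed} copy of $\Gamma_{n,k-1}$ and $s_{\rev(\Gamma)}=p_{\Gamma}$ by \eqref{eq:sprev}, the induction is forced to carry along a companion formula for the \emph{predecessor} $p_{\Gamma_{n,k}}(x)$ of a downward vertex, which one reads off from the successor formula by exchanging the roles of $s$ and $p$ and of $f$ and $\ell$ (using \eqref{eq:uprev}--\eqref{eq:last}). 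The base cases $n\in\{2,3\}$ are checked against Figure~\ref{fig:G2-3}.

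For the inductive step I would write a downward vertex $x$ in level~$k$ of $Q_{n+1}$ as $x=x'\circ b$ and distinguish three regimes. If $b=0$ and $x'$ is not the last entry of $\Gamma_{n,k}$, then $s_{\Gamma_{n+1,k}}(x)=s_{\Gamma_{n,k}}(x')\circ 0$; appending a $0$-bit moves neither the position $i$ of the first $0$-bit nor --- unless $x'$ has the special form $1^{i-1}0^{n-i+1}$ --- the position $j$, so the parameters of $x$ agree with those of $x'$ and the claim is inherited (no rule prescribes flipping coordinate $n+1$ in this regime). If $b=1$ and $x'$ is not the first entry of $\Gamma_{n,k-1}$, then $x'$ has weight $k-1$ and $s_{\Gamma_{n+1,k}}(x)=p_{\Gamma_{n,k-1}}(x')\circ 1$, so the claim follows from the predecessor half of the induction hypothesis applied in $\Gamma_{n,k-1}$. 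The remaining \emph{boundary} vertices are $\ell(\Gamma_{n,k})\circ 0$, the vertex $\ell(\Gamma_{n,k-1})\circ 1$ sitting just before the seam, and the wrap-around vertex $\ell(\Gamma_{n+1,k})$ whose successor is $f(\Gamma_{n+1,k})$; for each I would substitute the explicit first/last entries from \eqref{eq:GCkFL} into the successor rule \eqref{eq:GCsuc} for $\Gamma_{n+1}$ and compute directly. These three boundary transitions are exactly what produces the non-generic cases in the statement: $p_1=n$ appears when the downward excursion of $\Gamma_{n+1}$ runs up to the top coordinate, and the two cases involving $j+1$ appear when the excursion crosses the seam between $\Gamma_{n,k}\circ 0$ and $\rev(\Gamma_{n,k-1})\circ 1$. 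Exchanging $s$ with $p$ throughout yields the companion predecessor statement at the same time.

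I expect the main obstacle to be the interplay between the subcase $x'=1^{i-1}0^{n-i+1}$ in the first regime --- where the parameter $j$ of $x=x'\circ 0$ genuinely differs from that of $x'$, so the excursion has to be recomputed from scratch --- and the bookkeeping for the three boundary vertices, where one must simultaneously track the parity condition $k\equiv i\pmod 2$ and the roles of the coordinates $i$, $j$, $j+1$ and $n$; making all four output cases line up is where the argument is most delicate.

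As an alternative that avoids the predecessor companion, one can argue directly in $\Gamma_n$: given a downward vertex $x$ with parameters $i$ and $j$, follow $\Gamma_n$ step by step via \eqref{eq:GCsuc} through its excursion below level~$k$; by Lemma~\ref{lem:GCk} the first return to level~$k$ differs from $x$ in exactly two coordinates, so it suffices to show that this excursion stays among bitstrings agreeing with $x$ on coordinates $\geq j+2$, and then to identify, from the self-similar mirror structure of the reflected Gray code restricted to the first $j+1$ coordinates, which two of those bits flip --- a split again governed precisely by the parity of $k-i$, by whether $j=n$, and by the value of $x_{j+1}$. Here the obstacle is organizing the description of the excursion uniformly enough to handle all four cases at once.
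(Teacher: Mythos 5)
The paper offers no proof of this lemma at all: it is taken, with a notational adjustment, from Theorem~4 of Tang and Liu \cite{MR0349274}, so there is no in-paper argument to compare yours against. Judged on its own terms, your first strategy is the right one and is consistent with the inductive machinery the paper does develop (the proofs of Lemmas~\ref{lem:GCk}, \ref{lem:GCksub} and \ref{lem:GCkup} all run exactly this induction on \eqref{eq:GCk2}). Your central structural observations are correct: because the second half of \eqref{eq:GCk2} is $\rev(\Gamma_{n,k-1})\circ 1$, the successor in $\Gamma_{n+1,k}$ of a generic second-half vertex $x'\circ 1$ is $p_{\Gamma_{n,k-1}}(x')\circ 1$, so the induction must indeed carry a companion predecessor formula; and the non-generic branches of the statement do come from the seam and the wrap-around. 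For instance, the branch $p_1=n$, $p_2=i$ applies precisely to $x=1^{k-1}0^{n-k}1=\ell(\Gamma_{n,k})$ (the condition $j=n$ forces $x=1^{i-1}0^{n-i}1$, hence $i=k$), whose cyclic successor is $f(\Gamma_{n,k})$ and whose lower common neighbor is $f(\Gamma_{n,k-1})$, matching $\ell(\down(\Gamma_{n,k}))=f(\Gamma_{n,k-1})$ from Lemma~\ref{lem:GCksub}.

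That said, what you have is a strategy rather than a proof, and the two points you yourself flag as delicate are where essentially all the work lies. First, the companion predecessor statement cannot simply be ``read off by exchanging $s$ with $p$ and $f$ with $\ell$'': there is no symmetry carrying $\Gamma_{n,k}$ to $\rev(\Gamma_{n,k})$ that preserves the parameters $i$ and $j$, so the predecessor rule is a genuinely separate four-case formula that must be written out and carried through its own induction in lockstep with the successor rule. Second, some of your boundary bookkeeping needs correcting: $\ell(\Gamma_{n,k-1})\circ 1$ sits just \emph{after} the seam, and its successor in $\Gamma_{n+1,k}$ is still given by the generic second-half (predecessor) rule, so it is not a special case; the genuine special vertices are $\ell(\Gamma_{n,k})\circ 0$ (seam crossing) and $\ell(\Gamma_{n+1,k})=f(\Gamma_{n,k-1})\circ 1$ (wrap-around), together with the fact --- noted in the paper's proof of Lemma~\ref{lem:GCkup} --- that $\ell(\Gamma_{n,1})\circ 0$ switches from downward to upward, so downward-ness is not simply inherited from $\Gamma_n$ to $\Gamma_{n+1}$. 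None of this is fatal, but until the predecessor formula is stated and each of the four output cases is matched against \eqref{eq:GCsuc} and \eqref{eq:GCkFL} in every regime, the argument is incomplete.
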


Using \eqref{eq:GCsuc} and Lemmas~\ref{lem:GCupw}--\ref{lem:downexp} we are now ready to derive a loopless algorithm for generating trimmed Gray codes.

\subsection{A loopless algorithm for trimmed Gray codes}

Loopless algorithms both for the reflected Gray code $\Gamma_n$ and for its restriction $\Gamma_{n,k}$ to one level of the cube (i.e, an algorithmic version of Theorem~\ref{thm:comb}) were already provided in \cite{MR0366085,MR0424386}.
However, these two algorithms cannot simply be merged into a loopless algorithm producing the trimmed Gray code.
Instead, we provide a loopless algorithm that is based on the explicit description of successors given in the previous section.

\begin{algorithm}
\renewcommand\theAlgoLine{T\arabic{AlgoLine}}
\LinesNumbered
\DontPrintSemicolon
\SetEndCharOfAlgoLine{}
\SetNlSty{}{}{}
\SetArgSty{}
\caption{$\TrimGC(n,k,l)$}
\label{alg:trim}
\KwIn{Integers $n\geq 2$ and $0\leq k<l\leq n$, $l-k\geq 2$.}
\KwResult{The algorithm visits all vertices of the trimmed Gray code in $Q_{n,[k,l]}$ (which is a saturating cycle in cases~(i) and (ii) of Theorem~\ref{thm:sat}).}
\vspace{.2em}
$c:=k+1$; $x:=1^c 0^{n-c}$;  \tcc*{initialize current level~$c$ and current vertex $x$} \label{line:init-xc}
$\nu_c:=1$; \lFor(\tcc*[f]{initialize $(\nu_1,\ldots,\nu_c)$ and $(p_1,\ldots,p_c)$}){$i:=1$ \KwTo $c$} {$p_i:=c-i+1$} \label{line:init-pnu}
\While{not enough vertices visited}{
 \leIf{$(c \text{ is even} \wedge x_1=0) \vee (c \text{ is odd} \wedge p_c<n \wedge x_{p_c+1}=0)$}{$\upw:=\true$}{$\upw:=\false$} \label{line:up-def}
 \If(\tcc*[f]{follow $\Gamma_n$ up}){$(\upw=\true \;\wedge\;\; c<l-1)$ \label{line:middle-up}}{
  \If(\tcc*[f]{$x_1=0$}){$c$ is even \label{line:case1-start}}{
   $x_1:=1$; $\Visit(x)$; $c:=c+1$; $p_c:=1$ \label{line:visit1} \;
   \leIf{$x_2=0$}{$\nu_c:=c$}{$\nu_c:=\nu_{c-1}$}
  }
  \Else(\tcc*[f]{$c$ is odd and $x_{i+1}=0$}){
   $i:=p_c$; $x_{i+1}:=1$; $\Visit(x)$; $c:=c+1$; $p_c:=i$; $p_{c-1}:=i+1$ \label{line:visit2} \;
   \leIf{$(i+1=n \;\vee\; x_{i+2}=0)$}{$\nu_c:=c-1$}{$\nu_c:=\nu_{c-2}$} \label{line:case1-end}
  }
 }
 \ElseIf(\tcc*[f]{follow $\Gamma_n$ down}){$(\upw=\false \;\wedge\;\; c>k+1)$ \label{line:middle-down}}{
  \If(\tcc*[f]{$x_1=1$}){$c$ is even \label{line:case2-start}}{
   $x_1:=0$; $\Visit(x)$; $c:=c-1$ \label{line:visit3} \;
   \lIf{$x_2=1$}{$\nu_c:=\nu_{c+1}$}
  }
  \Else(\tcc*[f]{$c$ is odd and $x_{i+1}=1$}){
   $i:=p_c$; $x_{i+1}:=0$; $\Visit(x)$; $c:=c-1$; $p_c:=i$; $\nu_c:=c$ \label{line:visit4} \;
   \lIf{$x_{i+2}=1$}{$\nu_{c-1}:=\nu_{c+1}$} \label{line:case2-end}
  }
 }
 \ElseIf(\tcc*[f]{follow $\Gamma_{n,c}$ through $\up(x,s_{\Gamma_{n,c}}(x))$}){$(\upw=\true \;\wedge\;\; c=l-1)$ \label{line:boundary-up}}{
  \If(\tcc*[f]{$x_{i-1}=0$ and $x_i=1$}){$c$ is even \label{line:case3-start}}{
   $i:=p_c$; $x_{i-1}:=1$; $\Visit(x)$; $x_i:=0$; $\Visit(x)$; $p_c:=i-1$ \label{line:visit5} \;
   \lIf{$x_{i+1}=1$}{$\nu_{c-1}:=\nu_c$; $\nu_c:=c$}
  }
  \Else(\tcc*[f]{$c$ is odd, $x_{i+1}=0$ and $x_i=1$}){
   $i:=p_c$; $x_{i+1}:=1$; $\Visit(x)$; $x_i:=0$; $\Visit(x)$; $p_c:=i+1$ \label{line:visit6} \;
   \lIf{$(i+2\leq n \;\wedge\; x_{i+2}=1)$}{$\nu_c:=\nu_{c-1}$} \label{line:case3-end}
  }
 }
 \Else(\tcc*[f]{$\upw=\false \;\wedge\;\; c=k+1$; follow $\Gamma_{n,c}$ through $\down(x,s_{\Gamma_{n,c}}(x))$} \label{line:boundary-down}){
  \leIf(\tcc*[f]{$i$ is minimal s.t.\ $x_i=0$}){$x_1=0$}{$i:=1$}{$i:=p_{\nu_c}+1$} \label{line:case4-start}
  \If(\tcc*[f]{$x_{i-2}=1$ and $x_i=0$}){$c \not\equiv i \bmod 2$}{
   $x_{i-2}:=0$; $\Visit(x)$; $x_i:=1$; $\Visit(x)$; $p_{{\nu_c}+1}:=i-1$; $p_{\nu_c}:=i$ \label{line:visit7} \;
   \leIf{$(i+1\leq n \;\wedge\; x_{i+1}=1)$}{$\nu_{{\nu_c}+1}:=\nu_{{\nu_c}-1}$}{$\nu_{{\nu_c}+1}:=\nu_c$}
   \lIf{$i>3$}{$\nu_c:=\nu_c+2$}
  }
  \Else(\tcc*[f]{$c \equiv i \bmod 2$; $j>i$ is minimal s.t.\ $x_j=1$}){
   \leIf{$x_1=0$}{$a:=c$; $j:=p_a$}{$a:={\nu_c}-1$; $j:=p_a$}
   \lIf{$j=n$}{$x_n:=0$; $\Visit(x)$; $x_i:=1$; $\Visit(x)$; $p_1:=i$; $\nu_c:=1$ \label{line:visit8}}
   \ElseIf(\tcc*[f]{$j<n$, $x_{i-1}=1$ and $x_{j+1}=0$}){$x_{j+1}=0$}{
    $x_{i-1}:=0$; $\Visit(x)$; $x_{j+1}:=1$; $\Visit(x)$; $p_{\nu_c}:=j$; $p_{{\nu_c}-1}:=j+1$ \label{line:visit9} \;
    \leIf{$(j+2\leq n \;\wedge\; x_{j+2}=1)$}{$\nu_{\nu_c}:=\nu_{{\nu_c}-2}$}{$\nu_{\nu_c}:={\nu_c}-1$}
    \lIf{$i>2$}{$\nu_c:=\nu_c+1$}
   }
   \Else(\tcc*[f]{$j<n$, $x_{j+1}=1$ and $x_i=0$}){
    $x_{j+1}:=0$; $\Visit(x)$; $x_i:=1$; $\Visit(x)$; $p_a:=i$; $p_{a-1}:=j$ \label{line:visit10} \;
    \lIf{$(j+2\leq n \;\wedge\; x_{j+2}=1)$}{$\nu_{a-2}:=\nu_a$}
    \leIf{$j=i+1$}{$\nu_c:=a-1$}{$\nu_{a-1}:=a-1$, $\nu_c:=a$} \label{line:case4-end}
   }
  }
 }
}
\end{algorithm}

Consider now the algorithm $\TrimGC(n,k,l)$ that computes a trimmed Gray code between levels $k$ and $l$ with $l-k\geq 2$ of $Q_n$ as described in Section~\ref{sec:trimming} in Theorem~\ref{thm:trim}, i.e., the algorithm produces a saturating cycle for cases~(i) and (ii) of Theorem~\ref{thm:sat}.
At the end of this section we describe how to modify the algorithm to produce a tight enumeration for cases~(i) and (ii) of Theorem~\ref{thm:enum}.

The algorithm maintains the current vertex in the variable $x$ and the current level in the variable $c$, $k<c<l$.
Both are initialized in line~\ref{line:init-xc} to $c=k+1$ and $x=1^c 0^{n-c}$, but the code can easily be modified to start at a different vertex.
The algorithm visits the sequence of vertices along the trimmed Gray code by the calls $\Visit(x)$, which could be some user-defined function that reads $x$.
For simplicity the main while-loop does not have an explicit termination criterion, but this can be added easily, e.g., by providing an additional argument to the algorithm that specifies the number of vertices to be visited before termination.
If the while-loop is not terminated, then the same cycle is traversed over and over again.
There are four main cases distinguished in the while-loop of the algorithm:
If the current level is between $k+1$ and $l-1$, then we either follow $\Gamma_n$ by flipping a 0-bit to a 1-bit if the condition in line~\ref{line:middle-up} is satisfied, or we follow $\Gamma_n$ by flipping a 1-bit to a 0-bit if the condition in line~\ref{line:middle-down} is satisfied.
If the current level is $c=l-1$ and $x$ is an upward vertex, i.e., the condition in line~\ref{line:boundary-up} is satisfied, then we first flip a 0-bit to the vertex $\up(x,y)$, $y:=s_{\Gamma_{n,c}}(x)$, and then a 1-bit to the vertex $y$.
On the other hand, if the current level is $c=k+1$ and $x$ is a downward vertex, i.e., the condition in line~\ref{line:boundary-down} is satisfied, then we first flip a 1-bit to the vertex $\down(x,y)$, $y:=s_{\Gamma_{n,c}}(x)$, and then a 0-bit to the vertex $y$.

The key to our loopless algorithm is to be able to determine in constant time the smallest integer $i\geq 1$ with $x_i=1$ or with $x_i=0$, recall \eqref{eq:GCsuc} and Lemmas~\ref{lem:GCupw}--\ref{lem:downexp}.
Furthermore, in Lemma~\ref{lem:downexp} we also need the smallest integer $j>i$ with $x_j=1$.
To achieve this the algorithm maintains the following data structures:
We maintain an array $(p_1,p_2,\dots,p_c)$ with the positions of the $1$-bits in $x=(x_1,x_2,\ldots,x_n)$ where $p_i$, $1\leq i \leq c$, is the position of the $i$-th 1-bit in $x$ counted from the right, from the highest index $n$.
Thus $p_c$ is the position of the first 1-bit in $x$ from the left, from the lowest index 1.
Since adding and removing $1$'s happens around the position $p_c$ (recall \eqref{eq:GCsuc}), the length of this array changes dynamically.
For $i>c$ the value of $p_i$ is undefined since the algorithm does not `clean up' those values after using them.
We also maintain an array $(\nu_1,\nu_2,\ldots,\nu_c)$ to quickly find the smallest integer $j>i$ with $x_j=0$ where $i$ is the smallest integer with $x_i=1$.
However, the value of $\nu_i$, $1\leq i \leq c$, is defined only if $p_i$ is a starting position of a substring of 1s in $x$; i.e., $x_{p_i}=1$ and $x_{p_i-1}=0$, or $p_i=1$.
In this case, the value of $\nu_i$ is the index such that $p_{\nu_i}$ is the position where the corresponding substring of 1s ends in $x$.
In particular, since $p_c$ is the position of the first $1$ in $x$, the value $p_{\nu_c}+1$ is the smallest integer greater than $p_c$ such that $x$ has a 0-bit at this position.
Initially, there is only one substring of 1s in $x=1^c 0^{n-c}$ that starts at position $p_c=1$ and ends at position $p_1=c$, so we have $\nu_c=1$, see line~\ref{line:init-pnu}.

\subsubsection{Correctness of the algorithm}

We now argue about the correctness of the algorithm $\TrimGC(n,k,l)$.
In lines~\ref{line:middle-up} and \ref{line:middle-down} (see also line~\ref{line:up-def}), by Lemma~\ref{lem:GCupw} the algorithm correctly checks whether $x$ is an upward vertex in a level $c\in[k+1,l-2]$ or a downward vertex in a level $c\in[k+2,l-1]$, respectively.
The correctness of the corresponding modifications in lines~\ref{line:case1-start}--\ref{line:case1-end} and lines~\ref{line:case2-start}--\ref{line:case2-end} follows immediately from \eqref{eq:GCsuc}.
In lines \ref{line:case3-start}--\ref{line:case3-end} the algorithm moves from $x$ in level~$c=l-1$ to $s_{\Gamma_{n,c}}(x)=:y$ via the common neighbor $\up(x,y)$ in level~$l$, which is correct by Lemma~\ref{lem:upexp}.
Similarly, in lines \ref{line:case4-start}--\ref{line:case4-end} the algorithm moves from $x$ in level~$c=k+1$ to $s_{\Gamma_{n,c}}(x)=:y$ via the common neighbor $\down(x,y)$ in level~$k$, which is correct by Lemma~\ref{lem:downexp}.
It can be verified straightforwardly from the pseudocode that the algorithm correctly updates all relevant entries of $p$ and $\nu$ in all cases.

\subsubsection{Running time and space requirements of the algorithm}

We first argue about the running time of the algorithm $\TrimGC(n,k,l)$.
Clearly, the initialization in lines~\ref{line:init-xc} and \ref{line:init-pnu} takes $\cO(n)$ time.
In all subsequent steps, there are only a constant number of operations between any two $\Visit(x)$ calls, so the algorithm is indeed loopless and each bitstring is generated in $\cO(1)$ time.
The space required by our algorithm is $\cO(n)$, as each of the arrays $x$, $p$ and $\nu$ has at most $n$ entries.

\subsubsection{Proof of part~(a) of Theorem~\ref{thm:algo}}

The previous arguments show that the algorithm $\TrimGC(n,k,l)$ correctly produces a saturating cycle in $Q_{n,[k,l]}$ in the claimed running time for cases~(i) and (ii) of Theorem~\ref{thm:sat}.
To obtain a loopless algorithm that generates a tight enumeration of the vertices of $Q_{n,[k,l]}$, we simply call $\TrimGC(n,k-1,l+1)$ and omit the first of the two $\Visit(x)$ calls in each of the lines~\ref{line:visit5}, \ref{line:visit6}, \ref{line:visit7}, \ref{line:visit8}, \ref{line:visit9} and \ref{line:visit10}.
The algorithm then moves directly with a distance-2 step from a vertex $x$ in level $k$ or $l$ to the vertex $s_{\Gamma_{n,k}}(x)$ or $s_{\Gamma_{n,l}}(x)$, respectively, without visiting $\down(x,s_{\Gamma_{n,k}}(x))$ or $\up(x,s_{\Gamma_{n,l}}(x))$ in between.
Also, if $k=0$ then line~\ref{line:init-pnu} has to be omitted, and the special case $c=1$ and $x=0^{n-1}1$ mentioned after \eqref{eq:GCsuc} must be treated separately in lines~\ref{line:visit4} and \ref{line:case2-end}.
This proves part~(a) of Theorem~\ref{thm:algo}.

\subsection{Efficient algorithms for glued Gray codes}

In this section we present an algorithm $\SatCycle(n,k)$ that computes a saturating cycle in $Q_{n,[k,k+1]}$, i.e., an algorithmic version of Theorem~\ref{thm:sat2}. The existence of such a cycle was first proved in \cite{MR3759914}.
This algorithm forms the basis for all further algorithms to produce saturating cycles and tight enumerations across an even number of levels of the cube (recall the gluing technique from the proofs of Theorem~\ref{thm:sat}~(iii) and Theorem~\ref{thm:enum}~(iiia) and (iiib)).
For space reasons we do not explicitly specify these more general algorithms as pseudocode in our paper, but rather explain how they can be derived from the algorithm $\SatCycle(n,k)$.
All algorithms are implemented in the C++ code we provide with this paper, to be found on our website \cite{www}, so implementation details can be looked up there.

\begin{algorithm}
\renewcommand\theAlgoLine{S\arabic{AlgoLine}}
\LinesNumbered
\DontPrintSemicolon
\SetEndCharOfAlgoLine{}
\SetNlSty{}{}{}
\SetArgSty{}
\SetKw{KwIf}{if}
\SetKw{KwElse}{else}
\SetKw{KwElseIf}{else if}
\SetKw{KwThen}{then}
%\SetKw{KwDownto}{downto}
\caption[Algorithm SatCycle()]{\mbox{$\SatCycle(n,k)$}}
\label{alg:satcycle2}
\vspace{.2em}
\KwIn{Integers $n\geq 3$ and $1\leq k\leq \lfloor(n-1)/2\rfloor$}
\KwResult{A saturating cycle in $Q_{n,[k,k+1]}$}
\vspace{.2em}
$x:=a_{n,k}$ \tcc*{initialize starting vertex} \label{line:init-x}
stack $S$ \tcc*{initialize empty stack $S$} \label{line:init-S}
$S.\push((\rec,(n,k,\rightarrow)))$ \tcc*{(3) visit $b_{n,k} \rightarrow a_{n,k}$ without $a_{n,k+1}$} \label{line:push-rec1}
$S.\push((\flip,n))$ \tcc*{(2) $a_{n,k+1} \rightarrow b_{n,k}$} \label{line:push-flip1a}
$S.\push((\flip,n-k))$ \tcc*{(1) $a_{n,k} \rightarrow a_{n,k+1}$} \label{line:push-flip1b}
\While {$S.\sempty()=\false$} {
  $(\instr,r):=S.\pop()$ \tcc*{pop from the stack} \label{line:pop}
  \If (\tcc*[f]{popped a $\flip$-item, perform one bitflip}) {$\instr=\flip$} {
    $i:=r$ \label{line:pop-flip} \;
    $x_i:=1-x_i$ \;
    $\Visit(x)$ \label{line:visit-x} \;
  }
  \Else (\tcc*[f]{popped a $\rec$-item, perform one recursion step}) {
    $(n,k,\dir):=r$ \label{line:local-nk} \;
    \If (\tcc*[f]{middle levels conjecture cycle}) {$k\geq 1 \wedge n=2k+1$ \label{line:rec-case1}} {
      \If {$\dir=\!{}\leftarrow$} {
        $\HamCycle(k,x,\leftarrow)$ \tcc*{visit $a_{n,k} \rightarrow b_{n,k}$ without $a_{n,k+1}$} \label{line:HCleft-call}
      }
      \Else {
        $\HamCycle(k,x,\rightarrow)$ \tcc*{visit $b_{n,k} \rightarrow a_{n,k}$ without $a_{n,k+1}$} \label{line:HCright-call}
      }
    }
    \ElseIf {$k\geq 1 \wedge n>2k+1$ \label{line:rec-case2}} {
      \If (\tcc*[f]{visit $a_{n,k} \rightarrow b_{n,k}$ without $a_{n,k+1}$}) {$\dir=\!{}\leftarrow$ \label{line:rec-case3}} {
        $S.\push((\rec,(n-1,k,\rightarrow)))$ \tcc*{(4) $b_{n-1,k}\circ 0\rightarrow a_{n-1,k}\circ 0=b_{n,k}$} \label{line:push-rec2a}
        $S.\push((\flip,n))$ \tcc*{(3) $b_{n-1,k}\circ 1 \rightarrow b_{n-1,k}\circ 0$} \label{line:push-flip2a}
        $S.\push((\flip,n-k-1))$ \tcc*{(2) $b_{n-1,k-1}\circ 1 \rightarrow b_{n-1,k}\circ 1$} \label{line:push-flip2b}
        $S.\push((\rec,(n-1,k-1,\leftarrow)))$ \tcc*{(1) $a_{n,k}=a_{n-1,k-1}\circ 1 \rightarrow b_{n-1,k-1}\circ 1$} \label{line:push-rec2b}
      }
      \Else (\tcc*[f]{visit $b_{n,k} \rightarrow a_{n,k}$ without $a_{n,k+1}$}) {
        $S.\push((\rec,(n-1,k-1,\rightarrow)))$ \tcc*{(4) $b_{n-1,k-1}\circ 1 \rightarrow a_{n-1,k-1}\circ 1=a_{n,k}$} \label{line:push-rec3a}
        $S.\push((\flip,n-k-1))$ \tcc*{(3) $b_{n-1,k}\circ 1 \rightarrow b_{n-1,k-1}\circ 1$} \label{line:push-flip3a}
        $S.\push((\flip,n))$ \tcc*{(2) $b_{n-1,k}\circ 0 \rightarrow b_{n-1,k}\circ 1$} \label{line:push-flip3b}
        $S.\push((\rec,(n-1,k,\leftarrow)))$ \tcc*{(1) $b_{n,k}=a_{n-1,k}\circ 0 \rightarrow b_{n-1,k}\circ 0$} \label{line:push-rec3b}
      }
    }
  }
}
\end{algorithm}

The algorithm $\SatCycle(n,k)$ is essentially a recursive implementation of the inductive proof of Theorem~\ref{thm:sat2}, based on induction over $n$, given in \cite{MR3759914}.
It starts at the vertex $x:=a_{n,k}$ (line~\ref{line:init-x}, recall the definition \eqref{eq:ai-bi}) and it uses a stack $S$, initialized in line~\ref{line:init-S}, to keep track of the recursion steps of the computation and thereby avoiding any recursive calls.
There are two different types of items pushed and popped from the stack, which we call $\flip$-item and $\rec$-items.
A $\flip$-item is a pair $(\flip,i)$, where the variable $i$, $1\leq i\leq n$, indicates the bit position in the current vertex $x$ to be flipped in some step.
Such items are pushed onto the stack $S$ in lines~\ref{line:push-flip1a}, \ref{line:push-flip1b}, \ref{line:push-flip2a}, \ref{line:push-flip2b}, \ref{line:push-flip3a} and \ref{line:push-flip3b}.
A $\rec$-item is a pair $(\rec,(n',k',\dir))$, $\dir\in\{\leftarrow,\rightarrow\}$, where the variables $n'$ and $k'$, $3\leq n'\leq n$, $1\leq k'\leq \lfloor(n'-1)/2\rfloor$, indicate the recursion step to compute a saturating path in $Q_{n',[k',k'+1]}$, and the variable $\dir$ indicates the direction in which this path is to be traversed.
By a \emph{saturating path} we mean a path that visits all vertices on level~$k'$, and that starts and ends on this level.
Specifically, if $\dir=\!{}\leftarrow$ then the saturating path in $Q_{n',[k',k'+1]}$ will be traversed starting at the vertex $a_{n',k'}$ and ending at the vertex $b_{n',k'}$, and it will omit the vertices $a_{n',k'+1}$ and $b_{n',k'+1}$. So, to complete this path to a saturating cycle, only the vertex $a_{n',k'+1}$ has to be added.
If $\dir=\!{}\rightarrow$ then the start and end vertices of the path are interchanged, the path starts at $b_{n',k'}$ and ends at $a_{n',k'}$.
Such $\rec$-items are pushed onto the stack $S$ in lines~\ref{line:push-rec1}, \ref{line:push-rec2a}, \ref{line:push-rec2b}, \ref{line:push-rec3a} and \ref{line:push-rec3b}.
Note that the values of $n$ and $k$ of the algorithm $\SatCycle(n,k)$ are overwritten in line~\ref{line:local-nk} with the values $n'$ and $k'$ of $\rec$-items previously pushed onto the stack.

In each iteration of the main while-loop of the algorithm, one item is popped from the stack (line~\ref{line:pop}).
For every $\flip$-item popped from the stack, the corresponding bit is flipped in $x$ (lines~\ref{line:pop-flip}--\ref{line:visit-x}).
For every $\rec$-item $(\rec,(n,k,\dir))$, $\dir\in\{\leftarrow,\rightarrow\}$, popped from the stack, there are three cases to consider:

If $k=0$, then we do nothing.
This is captured by a non-existent else-branch of the if-statement in line~\ref{line:rec-case1}.

If $k\geq 1$ and $n>2k+1$, then the $\rec$-item is replaced by two $\rec$-items and two $\flip$-items (lines~\ref{line:rec-case3}--\ref{line:push-rec3b}), i.e., we perform a recursion step to construct a saturating path in $Q_{n,[k,k+1]}$ by gluing together two saturating paths in $Q_{n-1,[k,k+1]}$ and $Q_{n-1,[k-1,k]}$. In the special case $k=1$, the latter path is degenerate.
The gluing that joins the two saturating paths to one is achieved by two bitflips enforced by the two $\flip$-items that are pushed onto the stack.
The exact details how this gluing is achieved will become clear in the next section.
Note however that in order to achieve a certain order of bitflips, the corresponding items have to be pushed onto the stack in reverse order.
The later execution order is indicated on the right-hand side of our pseudocode in the comments, indicated by numbers (1)--(3) and (1)--(4) in lines \ref{line:push-rec1}--\ref{line:push-flip1b}, \ref{line:push-rec2a}--\ref{line:push-rec2b} and \ref{line:push-rec3a}--\ref{line:push-rec3b}.

In the third case, if $k\geq 1$ and $n=2k+1$, then we encounter a nontrivial boundary case of our recursion, i.e., the middle levels conjecture, Theorem~\ref{thm:mlc}.
This case is handled by calling the constant-time algorithm $\HamCycle()$ for computing a Hamilton cycle in the graph $Q_{2k+1,[k,k+1]}$ presented in \cite{DBLP:conf/esa/MutzeN15, DBLP:conf/soda/MutzeN17}.
We modify this algorithm by applying suitable bit permutations so that it starts at the vertex $a_{2k+1,k}$ and ends at the vertex $b_{2k+1,k}$ or vice-versa, and so that it visits all vertices of $Q_{2k+1,[k,k+1]}$ except the first vertex $a_{2k+1,k}$ or $b_{2k+1,k}$, respectively, and except the vertex $a_{2k+1,k+1}$. To make this saturating path a Hamilton cycle, only this vertex has to be added.
We refer to the corresponding algorithm that starts at $a_{2k+1,k}$ and ends at $b_{2k+1,k}$ as $\HamCycle(k,x,\leftarrow)$ and to the algorithm that starts at $b_{2k+1,k}$ and ends at $a_{2k+1,k}$ as $\HamCycle(k,x,\rightarrow)$.
In these calls, the bitstring $x$ is an additional argument to which all internally computed bitflips by the algorithm $\HamCycle()$ are applied, followed by corresponding $\Visit(x)$ calls.
All this happens inside $\HamCycle()$.
Possibly $2k+1$ is strictly smaller than the length of $x$, but within the two $\HamCycle(k,x,\dir)$ calls, $\dir\in\{\leftarrow,\rightarrow\}$, in line~\ref{line:HCleft-call} and \ref{line:HCright-call}, only the first $2k+1$ bits of $x$ are modified.

\subsubsection{Correctness of the algorithm}

The next lemma states that the algorithm $\SatCycle(n,k)$ correctly glues together several saturating cycles of smaller dimension in the graphs $Q_{n',[k',k'+1]}$, where $3\leq n'\leq n$ and $1\leq k'\leq \lfloor(n'-1)/2\rfloor$, to a single saturating cycle in $Q_{n,[k,k+1]}$, visiting all the corresponding vertices along the way.

To state the lemma, we write $B_{n,k}$ for all bitstrings of length~$n$ with weight~$k$.
For $x=(x_1,x_2,\ldots,x_n)$ and $1\leq i,j\leq n$ we write $x_{[i,j]}:=(x_i,x_{i+1},\ldots,x_j)$. In the degenerate case $i>j$ we have $x_{[i,j]}=()$.
Moreover, we say that an item on the stack $S$ of our algorithm \emph{has been processed} at the moment the item below it is popped from the stack.
The item at the bottom of the stack $S$ \emph{has been processed} when the algorithm terminates.

\begin{lemma}
\label{lem:satcycle-correct}
The algorithm $\SatCycle(n,k)$ satisfies the following invariant for all inputs $n\geq 3$ and $1\leq k\leq \lfloor(n-1)/2\rfloor$:
\begin{enumerate}[label=(\roman*)]
\item
At the moment a $\rec$-item $(\rec,(n',k',\leftarrow))$ is popped from the stack $S$, the current vertex $x$ satisfies $x_{[1,n']}=a_{n',k'}$, and when this item has been processed, the current vertex $x$ satisfies $x_{[1,n']}=b_{n',k'}$.
Moreover, in between all vertices of the form $x'\circ x_{[n'+1,n]}$, $x'\in B_{n',k'}$, except the vertex $a_{n',k'}\circ x_{[n'+1,n]}$ are visited, no vertex is visited twice.
Furthermore, the vertex $a_{n',k'+1}\circ x_{[n'+1,n]}$ is not visited, and if $n'>2k'+1$, then the vertex $b_{n',k'+1}\circ x_{[n'+1,n]}$ is not visited either.
Put differently, $a_{n',k'}\circ x_{[n'+1,n]}$ together with the visited vertices forms a saturating path in $Q_{n',[k',k'+1]}\circ x_{[n'+1,n]}$.

\item
At the moment a $\rec$-item $(\rec,(n',k',\rightarrow))$ is popped from the stack $S$, the current vertex $x$ satisfies $x_{[1,n']}=b_{n',k'}$, and when this item has been processed, the current vertex $x$ satisfies $x_{[1,n']}=a_{n',k'}$.
Moreover, in between all vertices of the form $x'\circ x_{[n'+1,n]}$, $x'\in B_{n',k'}$, except the vertex $b_{n',k'} \circ x_{[n'+1,n]}$ are visited, no vertex is visited twice.
Furthermore, the vertex $b_{n',k'+1}\circ x_{[n'+1,n]}$ is not visited, and if $n'>2k'+1$, then the vertex $b_{n',k'+1} \circ x_{[n'+1,n]}$ is not visited either.
Put differently, $b_{n',k'} \circ x_{[n'+1,n]}$ together with the visited vertices forms a saturating path in $Q_{n',[k',k'+1]}\circ x_{[n'+1,n]}$.
\end{enumerate}
\end{lemma}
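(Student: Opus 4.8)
The plan is to prove both invariants (i) and (ii) simultaneously by induction on $n'$, since a $\rec$-item with parameters $(n',k',\dir)$ is only ever pushed while processing a $\rec$-item with a strictly larger dimension (or as the initial item with dimension $n$). So I would fix the input pair $(n,k)$ and argue that whenever a $\rec$-item $(\rec,(n',k',\dir))$ is popped with the current vertex $x$ satisfying the stated boundary condition $x_{[1,n']}=a_{n',k'}$ (resp.\ $b_{n',k'}$), then by the time it has been processed the claimed conclusions hold. The induction base is the case $n'=2k'+1$: here the item triggers a call to $\HamCycle(k',x,\dir)$, and the correctness of the modified middle-levels algorithm of~\cite{DBLP:conf/esa/MutzeN15, DBLP:conf/soda/MutzeN17}, together with the bit permutations we applied so that it starts at $a_{2k'+1,k'}$ (resp.\ $b_{2k'+1,k'}$), ends at the opposite vertex, visits everything except the start vertex and except $a_{2k'+1,k'+1}$, and modifies only the first $2k'+1$ bits of $x$, is exactly invariant (i) (resp.\ (ii)) for this $n'$. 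The degenerate case $k'=0$ (reached only via the recursive push in line~\ref{line:push-rec2b} when $k=1$) is also a base case: the else-branch of line~\ref{line:rec-case1} is empty, nothing is visited, and with $k'=0$ we have $a_{n',0}=b_{n',0}=0^{n'}$, so both the entry and exit conditions read $x_{[1,n']}=0^{n'}$ and the "saturating path" is a single vertex; this is vacuously consistent.

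For the induction step, $n'>2k'+1$ and $k'\ge 1$, I would trace the stack exactly as the pseudocode prescribes. Consider first $\dir=\!\leftarrow$ (lines~\ref{line:rec-case3}--\ref{line:push-rec2b}). On entry $x_{[1,n']}=a_{n',k'}=a_{n'-1,k'-1}\circ 1$. The four pushed items are processed in the order (1)--(4) annotated in the comments: item (1) is $(\rec,(n'-1,k'-1,\leftarrow))$, which by the induction hypothesis takes $x_{[1,n'-1]}$ from $a_{n'-1,k'-1}$ to $b_{n'-1,k'-1}$ while visiting a saturating path in $Q_{n'-1,[k'-1,k']}\circ x_{[n',n]}=Q_{n'-1,[k'-1,k']}\circ 1$ that omits $a_{n'-1,k'-1}\circ 1$ and $a_{n'-1,k'}\circ 1=a_{n',k'+1}$ (note $x_{n'}=1$ is untouched by the recursive call); item (2) flips bit $n'-k'-1$, the $\flip$-item of line~\ref{line:push-flip2b}, moving within level $k'$ from $b_{n'-1,k'-1}\circ 1$ to $b_{n'-1,k'}\circ 1$; item (3) flips bit $n'$, taking us to $b_{n'-1,k'}\circ 0 = b_{n',k'}$; item (4) is $(\rec,(n'-1,k',\rightarrow))$, which by the induction hypothesis takes $x_{[1,n'-1]}$ from $b_{n'-1,k'}$ back to $a_{n'-1,k'}\circ 0 = b_{n',k'}$... here I must be careful: in fact item (4) ends at $a_{n'-1,k'}\circ 0$, and since $a_{n'-1,k'}\circ 0^{\,}$ is the vertex the pseudocode comment calls $b_{n,k}$, the final vertex after processing all four items is $b_{n',k'}$, matching the exit condition in (i). The bit $x_{n'}=0$ stays fixed throughout item (4)'s subtree, so that call visits a saturating path in $Q_{n'-1,[k',k'+1]}\circ 0$ omitting $b_{n'-1,k'}\circ 0=b_{n',k'}$ and $b_{n'-1,k'+1}\circ 0=b_{n',k'+1}$. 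The union of the two half-paths with the two connecting flips visits, between entry and full processing, every vertex $x'\circ x_{[n'+1,n]}$ with $x'$ in $B_{n',k'}$ except $a_{n',k'}\circ x_{[n'+1,n]}$, and it avoids $a_{n',k'+1}$ and (since $n'>2k'+1$) $b_{n',k'+1}$ — precisely the assertions of (i). The case $\dir=\!\rightarrow$ (lines~\ref{line:push-rec3a}--\ref{line:push-rec3b}) is entirely symmetric, swapping the roles of $a$ and $b$ and of $\leftarrow$ and $\rightarrow$; I would spell it out briefly in the same style.

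Finally I would check the outermost level. The stack is initialized (lines~\ref{line:push-rec1}--\ref{line:push-flip1b}) with $x=a_{n,k}$, then three items to be processed as: flip bit $n-k$ ($a_{n,k}\to a_{n,k+1}$), flip bit $n$ ($a_{n,k+1}\to b_{n,k}$), and $(\rec,(n,k,\rightarrow))$. By the already-established invariant~(ii) applied to this item, the current vertex returns from $b_{n,k}$ to $a_{n,k}$ while visiting all of $B_{n,k}$ except $b_{n,k}$, omitting $b_{n,k+1}$, and visiting no vertex twice; together with the two initial flips, which additionally supply $a_{n,k+1}$ and $b_{n,k}$ in level $k+1$ and level $k$ respectively, and closing up cyclically from $a_{n,k}$ back to the start, this produces a closed walk that visits every vertex of $Q_{n,[k,k+1]}$ exactly once except for $b_{n,k+1}$ — i.e.\ a saturating cycle. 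The main obstacle I anticipate is bookkeeping: keeping straight which of the two vertices $a_{\cdot,\cdot+1}$, $b_{\cdot,\cdot+1}$ on the upper level is the omitted one in each of the four recursive branches, and verifying that the two $\flip$ bit-positions ($n'$ and $n'-k'-1$) are exactly the coordinates that realize the moves $b_{n'-1,k'-1}\circ 1 \to b_{n'-1,k'}\circ 1 \to b_{n'-1,k'}\circ 0$ (and their mirror images), using the explicit coordinates of $a_{n,i},b_{n,i}$ from~\eqref{eq:ai-bi}. Once those bitflip computations are pinned down, the induction goes through mechanically; the "no vertex visited twice" clause follows because the two half-paths live in disjoint slices $Q_{n'-1,\cdot}\circ 1$ and $Q_{n'-1,\cdot}\circ 0$ and each half is non-self-intersecting by the induction hypothesis.
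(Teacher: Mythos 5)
Your proposal takes essentially the same route as the paper's proof: the paper organizes the very same induction via an explicit recursion tree whose nodes are the $\rec$-items, proving six conditional auxiliary statements (propagation of the entry condition to the left child, propagation across the two connecting $\flip$-items to the right child, and the local composition step) that are combined by a left-to-right tree traversal, with the identical base cases $k'=0$ (empty branch) and $n'=2k'+1$ (the $\HamCycle()$ call). The only bookkeeping slip to repair is the identification $b_{n'-1,k'+1}\circ 0=b_{n',k'+1}$, which is false; the correct identity is $a_{n'-1,k'+1}\circ 0=b_{n',k'+1}$, so the omission of $b_{n',k'+1}$ in the parent comes from the right child's $a$-type omission, while $b_{n'-1,k'+1}\circ 0$ is an additional unvisited vertex not needed for the parent's claim.
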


\begin{proof}
To prove the lemma we consider the recursion tree $T=T(n,k)$ corresponding to the algorithm $\SatCycle(n,k)$.
The tree $T$ is defined as follows, see Figure~\ref{fig:tnk}:
The tree is rooted and it is a binary tree, i.e., each node has either a left and a right child, or no children at all if it is a leaf.
The nodes of $T$ are the $\rec$-items that are stored on the stack $S$ in the course of the algorithm, and the $\rec$-item $(\rec,(n,k,\rightarrow))$ that is pushed onto the stack $S$ in the first step (line~\ref{line:push-rec1}) is the root.
The remaining nodes of $T$ are defined recursively:
Each node $(\rec,(n',k',\dir))$, $\dir\in\{\leftarrow,\rightarrow\}$, satisfying $k'=0$, or $k'\geq 1$ and $n'=2k'+1$ is a leaf.
Otherwise, if $\dir=\!{}\leftarrow$, then the node has the left child $(\rec,(n'-1,k'-1,\leftarrow))$ and the right child $(\rec,(n'-1,k',\rightarrow))$, and if $\dir=\!{}\rightarrow$, then the node has the left child $(\rec,(n'-1,k',\leftarrow))$ and the right child $(\rec,(n'-1,k'-1,\rightarrow))$.
By the instructions in lines~\ref{line:push-rec2a}--\ref{line:push-rec2b} and lines~\ref{line:push-rec3a}--\ref{line:push-rec3b}, left children correspond to $\rec$-items located at the top of the stack $S$ at the end of the iteration of the while-loop in which they are pushed onto $S$, whereas right children are $\rec$-items located three items below the top at this moment.
Note that several nodes of $T$ may have the same signature $(\rec,(n',k',\dir))$, see Figure~\ref{fig:tnk}, but these are different $\rec$-items encountered on the stack $S$ in the course of the algorithm.

To prove the lemma we prove the following auxiliary statements:
\begin{enumerate}[label=(\arabic*)]
\item
At the moment the item $(\rec,(n,k,\rightarrow))$ is popped from the stack, the current vertex $x$ satisfies $x=b_{n,k}$.

\item
Given a non-leaf node $(\rec,(n',k',\leftarrow))$ and its left child $(\rec,(n'-1,k'-1,\leftarrow))$ in $T$, suppose that at the moment the item $(\rec,(n',k',\leftarrow))$ is popped from the stack the current vertex $x$ satisfies $x_{[1,n']}=a_{n',k'}$.
Then at the moment the left child $(\rec,(n'-1,k'-1,\leftarrow))$ is popped from the stack the current vertex $x$ satisfies $x_{[1,n'-1]}=a_{n'-1,k'-1}$.

\item
Given a non-leaf node $(\rec,(n',k',\rightarrow))$ and its left child $(\rec,(n'-1,k',\leftarrow))$ in $T$, suppose that at the moment the item $(\rec,(n',k',\rightarrow))$ is popped from the stack the current vertex $x$ satisfies $x_{[1,n']}=b_{n',k'}$.
Then at the moment the left child $(\rec,(n'-1,k,\leftarrow))$ is popped from the stack the current vertex $x$ satisfies $x_{[1,n'-1]}=a_{n'-1,k'}$.

\item
Given a non-leaf node $(\rec,(n',k',\leftarrow))$ and its right child $(\rec,(n'-1,k',\rightarrow))$ in $T$, suppose that at the moment the item $(\rec,(n'-1,k'-1,\leftarrow))$ (this is the left child) has been processed the current vertex $x$ satisfies $x_{[1,n'-1]}=b_{n'-1,k'-1}$.
Then at the moment the right child $(\rec,(n'-1,k',\rightarrow))$ is popped from the stack the current vertex $x$ satisfies $x_{[1,n'-1]}=b_{n'-1,k'}$.

\item
Given a non-leaf node $(\rec,(n',k',\rightarrow))$ and its right child $(\rec,(n'-1,k'-1,\rightarrow))$ in $T$, suppose that at the moment the item $(\rec,(n'-1,k',\leftarrow))$ (this is the left child) has been processed the current vertex $x$ satisfies $x_{[1,n'-1]}=b_{n'-1,k'}$.
Then at the moment the right child $(\rec,(n'-1,k'-1,\rightarrow))$ is popped from the stack the current vertex $x$ satisfies $x_{[1,n'-1]}=b_{n'-1,k'-1}$.

\item
Given a node $(\rec,(n',k',\dir))$, $\dir\in\{\leftarrow,\rightarrow\}$, in $T$, suppose that at the moment that this item is popped from the stack the current vertex $x$ satisfies $x_{[1,n']}=a_{n',k'}$ if $\dir=\!{}\leftarrow$ or $x_{[1,n']}=b_{n',k'}$ if $\dir=\!{}\rightarrow$.
Moreover, if this node is not a leaf, then suppose both children satisfy properties~(i) or (ii) of the lemma.
Then, this item satisfies property~(i) or (ii) of the lemma (if $\dir=\!{}\leftarrow$ or $\dir=\!{}\rightarrow$, respectively).
\end{enumerate}

Note that all these auxiliary statements except (1) are conditional.
However, the unconditional properties~(i) and (ii) of the lemma can be easily derived from the conditional statements (1)--(6) by applying them to all nodes of the recursion tree $T$ along a left-to-right tree traversal, starting by applying (1) to the root of $T$, which is the item $(\rec,(n,k,\rightarrow))$, see the dashed line in Figure~\ref{fig:tnk}.

\begin{figure}
\centering
\includegraphics[scale=0.916]{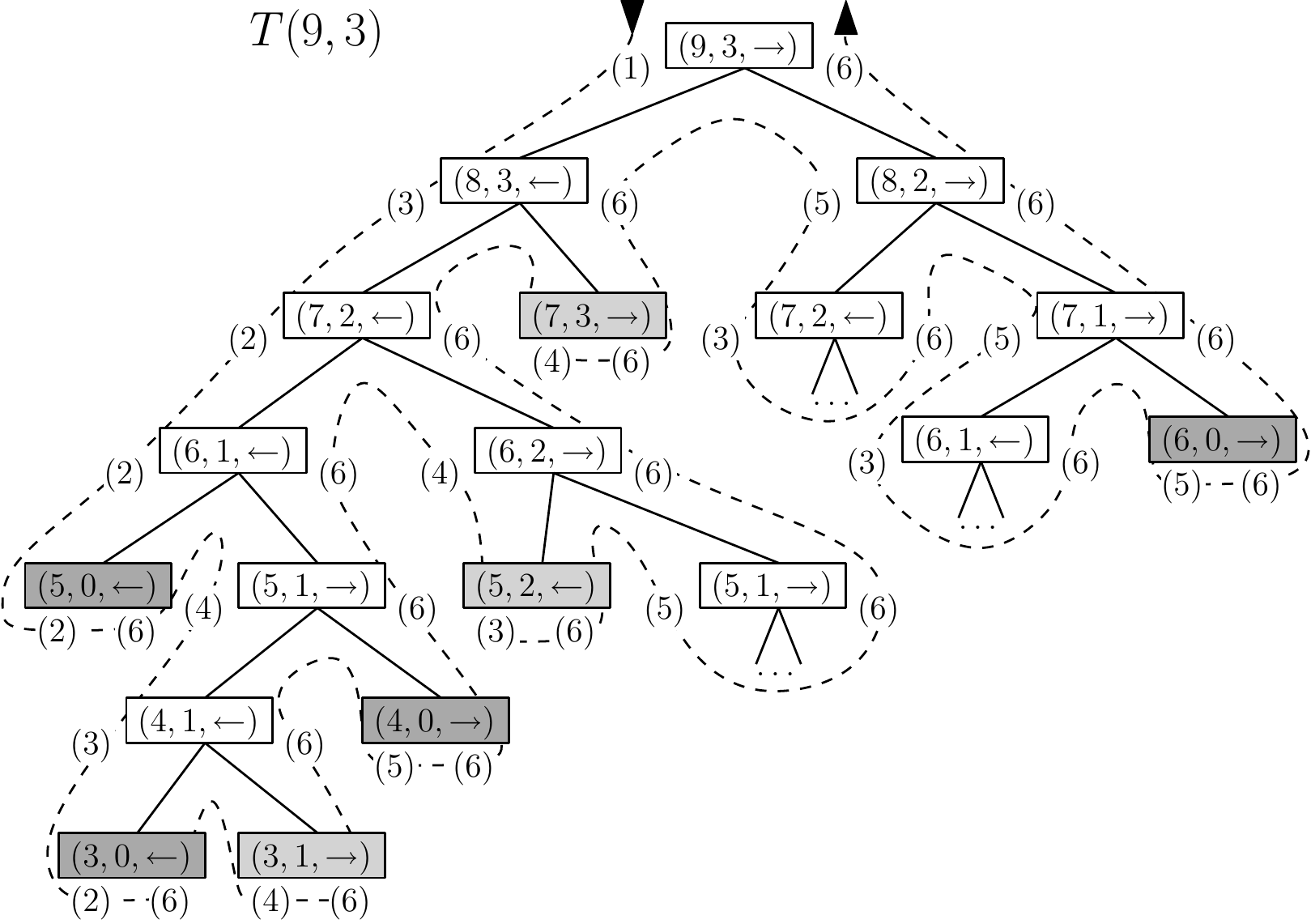} % scale down from 12pt to 11pt font size
\caption{The recursion tree $T=T(n,k)$ for $n=9$ and $k=3$ and other notations used in the proof of Lemma~\ref{lem:satcycle-correct}.
In the figure, the nodes $(\rec,(n',k',\dir))$ of $T$ are represented by the triples $(n',k',\dir)$.
The dashed line indicates a left-to-right tree traversal order in which the conditional auxiliary statements (1)--(6) are applied to derive the unconditional properties~(i) and (ii) of the lemma.}
\label{fig:tnk}
\end{figure}

We proceed to prove the auxiliary statements mentioned before.

\begin{enumerate}[label=(\arabic*)]
\item
This follows easily from the instructions in lines~\ref{line:init-x}--\ref{line:push-flip1b}.
Specifically, the item $(\rec,(n,k,\rightarrow))$ is popped from the stack in the third iteration of the while-loop, after the bits at positions $n-k$ and $n$ have been flipped, leading from the initial vertex $a_{n,k}$ via $a_{n,k+1}$ to the vertex $x=b_{n,k}$.

\item
When the node $(\rec,(n',k',\leftarrow))$ is popped from the stack, it is replaced by four new items, and the left child $(\rec,(n'-1,k'-1,\leftarrow))$ becomes the new topmost item on the stack, see lines~\ref{line:push-rec2a}--\ref{line:push-rec2b}.
Therefore, this item will be popped from the stack in the next iteration of the while-loop, i.e., the value of $x$ is not modified in between.
From the assumption $x_{[1,n']}=a_{n',k'}$ and the definition \eqref{eq:ai-bi} we obtain that $x_{[1,n'-1]}=a_{n'-1,k'-1}$.

\item
When the node $(\rec,(n',k',\rightarrow))$ is popped from the stack, it is replaced by four new items, and the left child $(\rec,(n'-1,k',\leftarrow))$ becomes the new topmost item on the stack, see lines~\ref{line:push-rec3a}--\ref{line:push-rec3b}.
Therefore, this item will be popped from the stack in the next iteration of the while-loop, i.e., the value of $x$ is not modified in between.
From the assumption $x_{[1,n']}=b_{n',k'}$ and the definition \eqref{eq:ai-bi} we obtain that $x_{[1,n'-1]}=a_{n'-1,k'}$.

\item
When the node $(\rec,(n',k',\leftarrow))$ is popped from the stack, it is replaced by four new items, where the left child $(\rec,(n'-1,k'-1,\leftarrow))$ becomes the new topmost item on the stack and the right child $(\rec,(n'-1,k',\rightarrow))$ is located three items below, see lines~\ref{line:push-rec2a}--\ref{line:push-rec2b}.
So when the left child has been processed, then the algorithm runs two iterations where the bits at positions $n'-k'-1$ (line~\ref{line:push-flip2b}) and $n'$ (line~\ref{line:push-flip2a}) are flipped, and in the next iteration the right child $(\rec,(n'-1,k',\rightarrow))$ is popped from the stack.
From the assumption that before these two bitflips we have $x_{[1,n'-1]}=b_{n'-1,k'-1}$ and from the definition \eqref{eq:ai-bi} we obtain that after these two bitflips the current vertex $x$ satisfies $x_{[1,n'-1]}=b_{n'-1,k'}$.

\item
When the node $(\rec,(n',k',\rightarrow))$ is popped from the stack, it is replaced by four new items, where the left child $(\rec,(n'-1,k',\leftarrow))$ becomes the new topmost item on the stack and the right child $(\rec,(n'-1,k'-1,\rightarrow))$ is located three items below, see lines~\ref{line:push-rec3a}--\ref{line:push-rec3b}.
So when the left child has been processed, then the algorithm runs two iterations where the bits at positions $n'$ (line~\ref{line:push-flip3b}) and $n'-k'-1$ (line~\ref{line:push-flip3a}) are flipped, and in the next iteration the right child $(\rec,(n'-1,k'-1,\rightarrow))$ is popped from the stack.
From the assumption that before these two bitflips we have $x_{[1,n'-1]}=b_{n'-1,k'}$ and from the definition \eqref{eq:ai-bi} we obtain that after these two bitflips the current vertex $x$ satisfies $x_{[1,n'-1]}=b_{n'-1,k'-1}$.

\item
We distinguish three cases. By the conditions in lines~\ref{line:rec-case1} and \ref{line:rec-case2}, no other cases are possible:
\begin{enumerate}[label=(\alph*)]
\item
The node $(\rec,(n',k',\dir))$ is a leaf and we have $k'=0$ (the corresponding leafs of $T$ are highlighted in dark gray in Figure~\ref{fig:tnk}).
We consider the subcase that $\dir=\!{}\leftarrow$, and we need to prove that this item satisfies property~(i) of the lemma.
In this case, the item is simply popped from the stack (the condition on the value of $x$ in this moment holds by assumption), and nothing is done, i.e., the item has been processed in the next iteration of the while-loop.
In between, the variable $x$ is not modified, so by the assumption $x_{[1,n']}=a_{n',0}=0^{n'}$ it also satisfies $x_{[1,n']}=b_{n',0}=0^{n'}$.
Trivially, in between all vertices of the form $x'\circ x_{[n'+1,n]}$, $x'\in B_{n',0}=\{0^{n'}\}$, except the vertex $a_{n',0}\circ x_{[n'+1,n]}=0^{n'}\circ x_{[n'+1,n]}$ (i.e., no vertices at all) are visited.
Furthermore, neither the vertex $a_{n',1}\circ x_{[n'+1,n]}$ nor the vertex $b_{n',1}\circ x_{[n'+1,n]}$ are visited.
This shows that the item $(\rec,(n',k',\dir))$ indeed satisfies property~(i) of the lemma.
The other subcase that $\dir=\!{}\rightarrow$ can be handled analogously.
In this case property~(ii) also holds trivially.

\item
The node $(\rec,(n',k',\dir))$ is a leaf and we have $k'\geq 1$ and $n'=2k'+1$. The corresponding leafs of $T$ are highlighted in lightgray in Figure~\ref{fig:tnk}.
We consider the subcase that $\dir=\!{}\leftarrow{}$, and we need to prove that this item satisfies property~(i) of the lemma.
In this case, the item is popped from the stack (the condition on the value of $x$ in this moment holds by assumption), and the algorithm issues the call $\HamCycle(k',x,\leftarrow)$ in line~\ref{line:HCleft-call}.
We know that upon termination of this algorithm, the current value of the variable $x$ satisfies $x_{[1,n']}=b_{n',k'}$.
Moreover, within the algorithm $\HamCycle()$ all vertices of the form $x'\circ x_{[n'+1,n]}$, $x'\in B_{n',k'}$, except the vertex $a_{n',k'}\circ x_{[n'+1,n]}$ are visited.
Furthermore, the vertex $a_{n',k'+1}\circ x_{[n'+1,n]}$ is not visited.
This shows that the item $(\rec,(n',k',\dir))$ indeed satisfies property~(i) of the lemma.
The other subcase that $\dir=\!{}\rightarrow$ can be handled analogously.
In this case property~(ii) holds by our definition of the algorithm $\HamCycle(k',x,\rightarrow)$ called in line~\ref{line:HCright-call}.

\item
The node $(\rec,(n',k',\dir))$ is not a leaf, so we have $k'\geq 1$ and $n'>2k'+1$ (the corresponding inner nodes of $T$ are drawn in white in Figure~\ref{fig:tnk}).
We consider the subcase that $\dir=\!{}\leftarrow{}$, and we need to prove that this item satisfies property~(i) of the lemma.
In this case, the item is popped from the stack (the condition on the value of $x$ in this moment holds by assumption) and replaced by four new items $(\rec,(n'-1,k',\rightarrow))$, $(\flip,n')$, $(\flip,n'-k'-1)$ and $(\rec,(n'-1,k'-1,\leftarrow))$, pushed onto the stack in this order in lines~\ref{line:push-rec2a}--\ref{line:push-rec2b}.
By the assumptions that the left child $(\rec,(n'-1,k'-1,\leftarrow))$ of the node $(\rec,(n',k',\dir))$ in $T$ satisfies property~(i) of the lemma and the right child $(\rec,(n'-1,k',\rightarrow))$ satisfies property~(ii) of the lemma, we obtain the following:
When the item $(\rec,(n',k',\dir))$ is popped from the stack and replaced by the four new items mentioned above, by assumption we know that the current vertex $x$ satisfies $x=a_{n',k'}\circ x_{[n'+1,n]}=a_{n'-1,k'-1}\circ 1\circ x_{[n'+1,n]}$.
Until the left child $(\rec,(n'-1,k'-1,\leftarrow))$ has been processed, the algorithm visits all vertices of the form $x'\circ 1\circ x_{[n'+1,n]}$, $x'\in B_{n'-1,k'-1}$, except the vertex $a_{n'-1,k'-1}\circ 1\circ x_{[n'+1,n]}=a_{n',k'}\circ x_{[n'+1,n]}$, no vertex twice.
Furthermore, neither the vertex $a_{n'-1,k'}\circ 1\circ x_{[n'+1,n]}=a_{n',k'+1}\circ x_{[n'+1,n]}$ nor the vertex $b_{n'-1,k'}\circ 1\circ x_{[n'+1,n]}$ are visited.
At this point, the current vertex $x$ satisfies $x=b_{n'-1,k'-1}\circ 1\circ x_{[n'+1,n]}$.
After that, the $\flip$-item $(\flip,n'-k'-1)$ is popped from the stack, so in the next iteration of the while-loop the algorithm visits the vertex $x=b_{n'-1,k'}\circ 1\circ x_{[n'+1,n]}$ for the first time.
After that, the $\flip$-item $(\flip,n')$ is popped from the stack, so in the next iteration of the while-loop the algorithm visits the vertex $x=b_{n'-1,k'}\circ 0\circ x_{[n'+1,n]}$ for the first time.
Until the right child $(\rec,(n'-1,k',\rightarrow))$ has been processed, the algorithm visits all vertices of the form $x'\circ 0\circ x_{[n'+1,n]}$, $x'\in B_{n'-1,k'}$, except the vertex $b_{n'-1,k'}\circ 0\circ x_{[n'+1,n]}$ (this vertex has been visited just before), no vertex twice.
Furthermore, neither the vertex $a_{n'-1,k'+1}\circ 0\circ x_{[n'+1,n]}=b_{n',k'+1}\circ x_{[n'+1,n]}$ nor the vertex $b_{n'-1,k'+1}\circ 0\circ x_{[n'+1,n]}$ are visited.
At this point, the current vertex $x$ satisfies $x=a_{n'-1,k'}\circ 0\circ x_{[n'+1,n]}=b_{n',k'}\circ x_{[n',n]}$.
Summarizing, the item $(\rec,(n',k',\dir))$ indeed satisfies property~(i) in the lemma.
The other subcase that $\dir=\!{}\rightarrow$ can be handled analogously, considering which four items are pushed onto the stack in lines~\ref{line:push-rec3a}--\ref{line:push-rec3b} after the item $(\rec,(n',k',\dir))$ is popped.
\end{enumerate}
\end{enumerate}

This completes the proof of the lemma.
\end{proof}

From Lemma~\ref{lem:satcycle-correct} we conclude that the algorithm $\SatCycle(n,k)$ indeed computes a saturating cycle in $Q_{n,[k,k+1]}$:
By the instructions in lines~\ref{line:init-x}--\ref{line:push-flip1b}, we start at the vertex $x=a_{n,k}$, and in the first two iterations of the while-loop, two $\flip$-items are popped from the stack in the reverse order in which they are pushed onto the stack, which entail the calls $\Visit(a_{n,k+1})$ and $\Visit(b_{n,k})$.
In the next iteration, the $\rec$-item $(\rec,(n,k,\rightarrow))$ is popped from the stack, so by property~(ii) from Lemma~\ref{lem:satcycle-correct}, from this point in time until this item has been processed, i.e., until the algorithm terminates, the algorithm visits a saturating path in $Q_{n,[k,k+1]}$ that ends at the vertex $a_{n,k}$. This is the last visited vertex.
Together with the first two visited vertices this saturating path forms a saturating cycle in the graph $Q_{n,[k,k+1]}$.

\subsubsection{Running time and space requirements of the algorithm}

The running time of the algorithms $\HamCycle(k,x,\dir)$, $\dir\in\{\leftarrow,\rightarrow\}$, called in lines~\ref{line:HCleft-call} and \ref{line:HCright-call} is $\cO(1)$ on average per visited vertex plus $\cO(k)$ for the initialization. As mentioned in \cite{DBLP:conf/soda/MutzeN17}, the initialization time is only $\cO(k)$ instead of the general bound $\cO(k^2)$ when the starting vertex is prescribed, as $a_{2k+1,k}$ or $b_{2k+1,k}$ in our case.
Since in total $2\binom{2k+1}{k}-2=2^{\Theta(k)}$ vertices are visited in each call, the $\cO(k)$ initialization time can be discounted to the $\cO(1)$ average time per visited vertex.

The next lemma is needed to bound the running time of the remaining operations of the algorithm $\SatCycle()$.

\begin{lemma}
\label{lem:stack}
The algorithm $\SatCycle(n,k)$ satisfies the following invariant for all inputs $n\geq 3$ and $1\leq k\leq \lfloor(n-1)/2\rfloor$:
\begin{enumerate}[label=(\roman*)]
\item
Any two consecutive $\rec$-items on the stack $S$ are separated by exactly two $\flip$-items.
At the bottom of the stack $S$ is a single $\rec$-item, and on the top of the stack are at at most two $\flip$-items.

\item
Consider all $\rec$-items $(\rec,(n_1,k_1,\dir_1)),\ldots (\rec,(n_r,k_r,\dir_r))$, on the stack $S$ from bottom to top.
Then we have $n_1>n_2>\cdots >n_{r-1}\geq n_r\geq 3$.
\end{enumerate}
\end{lemma}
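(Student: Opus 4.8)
The plan is to prove invariants~(i) and (ii) simultaneously by induction on the number of iterations of the main while-loop of $\SatCycle(n,k)$. For the base case, observe that after the initializations in lines~\ref{line:init-S}--\ref{line:push-flip1b} the stack $S$ reads, from bottom to top, $(\rec,(n,k,\rightarrow)),(\flip,n),(\flip,n-k)$, which is a single $\rec$-item with two $\flip$-items on top; so (i) holds, and (ii) holds trivially since $r=1$ and $n\geq 3$. This structure is preserved through the first two iterations, in which the two $\flip$-items are popped one after the other.

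For the inductive step, assume (i) and (ii) hold at the start of some iteration and examine the item popped in line~\ref{line:pop}. By (i) the top of the stack is either a $\flip$-item, or a $\rec$-item with no $\flip$-item above it, in which case it is the topmost ($r$-th) $\rec$-item and sits directly above exactly two $\flip$-items (or directly above the bottom of the stack). If a $\flip$-item is popped, nothing is pushed: a block of at most two $\flip$-items on top loses one element, the alternating pattern between consecutive $\rec$-items is untouched, and the sequence $n_1,\ldots,n_r$ is unchanged, so both invariants survive. If a $\rec$-item $(\rec,(n',k',\dir))$ with $k'=0$, or with $k'\geq 1$ and $n'=2k'+1$, is popped (the leaf cases of line~\ref{line:rec-case1}), then again nothing is pushed: either the stack becomes empty and the loop terminates, or the two $\flip$-items below the popped item become the new top, preserving (i); and the $\rec$-sequence loses only its last element $n_r$, so the chain $n_1>\cdots>n_{r-1}$ still satisfies (ii) (it satisfied the stronger $n_{r-2}>n_{r-1}$ before).

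Finally, if a $\rec$-item $(\rec,(n',k',\dir))$ with $k'\geq 1$ and $n'>2k'+1$ is popped (line~\ref{line:rec-case2}), it is replaced on the stack by four items which, from bottom to top, are a $\rec$-item, two $\flip$-items, and a $\rec$-item (lines~\ref{line:push-rec2a}--\ref{line:push-rec2b} or \ref{line:push-rec3a}--\ref{line:push-rec3b}), and both pushed $\rec$-items carry $n$-value $n'-1$. Together with the two $\flip$-items that sat below the popped item (or with the bottom of the stack, if the popped item was the bottom one), this keeps every two consecutive $\rec$-items separated by exactly two $\flip$-items and leaves a single $\rec$-item at the bottom, so (i) holds, and the top now carries no $\flip$-item. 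For (ii), the value $n_r=n'$ is replaced by two copies of $n'-1$; since $n_{r-1}\geq n_r=n'>n'-1$, the strict chain $n_1>\cdots>n_{r-1}>n'-1$ persists, the final two entries are equal, and $n'-1\geq 3$ because $n'>2k'+1\geq 3$, so (ii) is preserved. This completes the induction.

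I expect the only real difficulty to be bookkeeping: correctly invoking invariant~(i) from the induction hypothesis to locate the popped item (in particular, that a popped $\rec$-item is always the current topmost $\rec$-item and sits directly above exactly two $\flip$-items), and then tracking how the indexed sequence $n_1,\ldots,n_r$ changes when a $\rec$-item is either deleted or split into two copies with $n$-value decreased by one. Once the case split mirrors lines~\ref{line:rec-case1} and \ref{line:rec-case2} of the pseudocode, there is no genuine mathematical obstacle.
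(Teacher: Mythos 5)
Your proof is correct and takes essentially the same approach as the paper, which simply asserts that both invariants ``follow immediately from the instructions'' in the relevant push/pop lines; your induction over the iterations of the while-loop is just the fully written-out version of that inspection. All the key points check out: a popped $\rec$-item is necessarily the topmost one by invariant~(i), and the replacement of $n_r$ by two copies of $n_r-1$ preserves the chain because $n_{r-1}\geq n_r>n_r-1$ and $n'-1\geq 3$ follows from $n'>2k'+1\geq 3$.
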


\begin{proof}
These properties follow immediately from the instructions in lines~\ref{line:push-rec1}--\ref{line:push-flip1b}, \ref{line:local-nk}, \ref{line:push-rec2a}--\ref{line:push-rec2b}, \ref{line:push-rec3a}--\ref{line:push-rec3b}.
\end{proof}

From Lemma~\ref{lem:stack}~(ii) we conclude that the stack $S$ has height at most $3n$.

Consider the set $R$ of all $\rec$-items and the set $F$ of all $\flip$-items that appear on the stack $S$ in the course of the algorithm. Thus $R\cup F$ is the node set of the tree $T(n,k)$ defined in the previous section.
We define an injection $R\rightarrow F$ as follows, see Lemma~\ref{lem:stack}~(i): The $\rec$-item pushed onto the stack in line~\ref{line:push-rec1} is mapped onto the $\flip$-item pushed onto the stack in line~\ref{line:push-flip1a}.
Moreover, the $\rec$-items pushed in lines~\ref{line:push-rec2a} and \ref{line:push-rec2b} are mapped onto the $\flip$-items pushed in lines~\ref{line:push-flip2a} and \ref{line:push-flip2b}, respectively.
Similarly, the $\rec$-items pushed in lines~\ref{line:push-rec3a} and \ref{line:push-rec3b} are mapped onto the $\flip$-items pushed in lines~\ref{line:push-flip3a} and \ref{line:push-flip3b}, respectively.
From this injection it follows that $|R|\leq |F|$.
Note that processing each stack item takes only constant time, where by processing we mean popping it from the stack and either issuing the call to $\HamCycle()$, without the time spent inside this function, or pushing four other items onto the stack.
Combining this with the bound $|F|\geq |R|$ and using that each $\flip$-item leads to a call $\Visit(x)$ in line~\ref{line:visit-x} yields that the algorithm spends on average $\cO(1)$ time per visited vertex (outside of $\HamCycle()$).
As we argued before, also the algorithm $\HamCycle()$ needs only $\cO(1)$ time on average to visit each vertex, so overall each vertex is generated in $\cO(1)$ time on average.

The space required by our algorithm is only $\cO(n)$, as the array $x$ and the stack $S$ have at most $\cO(n)$ entries.

\subsubsection{Proof of part~(b) of Theorem~\ref{thm:algo}}

First of all the algorithm $\SatCycle(n,k)$ can easily be generalized to allow parameters $\lfloor n/2\rfloor\leq k\leq n-2$ by starting at the vertex $x:=1^{k+1}0^{n-k-1}$ and by applying the bitflip sequence computed by the original algorithm $\SatCycle(n,n-k-1)$, which starts at the complement of $x$, the vertex $a_{n,n-k-1}$.
Moreover, by mimicking the gluing approach from the proof of Theorem~\ref{thm:sat}~(iii), the algorithm $\SatCycle(n,k)$ can be easily generalized to an algorithm $\SatCycle(n,k,l)$ that generates a saturating cycle in $Q_{n,[k,l]}$ for any even number of consecutive levels $[k,l]$ that are entirely below or above the middle, by suitably stringing together multiple calls $\SatCycle(n,k')$, $k\leq k'\leq l$.
For details how this is done, see our C++ implementation.
All the previously established bounds for the running time and space requirements carry over to this setting.

To obtain analogous algorithms for tight enumerations, one first derives an algorithm $\TightEnum(n,k)$ that computes a tight enumeration of the vertices of $Q_{n,[k,k+1]}$. This algorithm mimicks the proof of Theorem~\ref{thm:enum}~(iiia) and is very much analogous to the algorithm $\SatCycle(n,k)$, using a stack.
Mimicking the gluing approach from the proof of Theorem~\ref{thm:enum}~(iiib), this algorithm can then be generalized to an algorithm $\TightEnum(n,k,l)$ that generates a tight enumeration of the vertices of $Q_{n,[k,l]}$ for any even number of levels $[k,l]$ that are entirely below or above the middle. For details, see our C++ implementation.

This proves part~(b) of Theorem~\ref{thm:algo}.

\section{Proof of Theorem~\ref{thm:long}}
\label{sec:long}

We now present the proof of Theorem~\ref{thm:long}.

\begin{proof}[Proof of Theorem~\ref{thm:long}]
For $c=0$ the claim follows from Theorem~\ref{thm:mlc}, so we can assume that $c\geq 1$.

The cycle obtained from Theorem~\ref{thm:trim} by trimming the reflected Gray code to levels $[k-c,k+1+c]$ of $Q_{2k+1}$ misses exactly
\begin{equation}
\label{eq:mb1}
  m:=2\Big(\binom{2k+1}{k+c+1}-u_{2k+1,k+c}\Big) \eqBy{eq:udnk} 2\left(\binom{2k+1}{k+c+1}-\binom{2k}{k+c}\right)=2\binom{2k}{k+c+1}
\end{equation}
many vertices.
The total number of vertices of $Q_{2k+1,[k-c,k+1+c]}$ satisfies the relation
\begin{equation}
\label{eq:vb}
  v:=2\left(\binom{2k+1}{k+c+1}+\binom{2k+1}{k+c}+\cdots+\binom{2k+1}{k+1}\right)\geq 2(c+1)\binom{2k+1}{k+c+1}\geq 4(c+1)\binom{2k}{k+c+1}
\end{equation}
(recall \eqref{eq:vQ}).
Combining \eqref{eq:mb1} and \eqref{eq:vb} shows that
\begin{equation}
\label{eq:mv1}
  \frac{m}{v}\leq \frac{1}{2(c+1)} \enspace,
\end{equation}
yielding the first bound claimed in Theorem~\ref{thm:long}.

To derive the second bound, we consider another way of building a long cycle in $Q_{2k+1,[k-c,k+1+c]}$.
For odd $c\geq 1$ we glue together $c+1$ many saturating cycles, obtained from Theorem~\ref{thm:sat2}, between the pairs of consecutive levels $(k-c,k-c+1),\ldots,(k-1,k),(k+1,k+2),\ldots,(k+c,k+c+1)$ as in the proof of Theorem~\ref{thm:sat}~(iii), see Figure~\ref{fig:gluing}.
The number of missed vertices $m$ in this case satisfies the relation
\begin{align}
  m &= 2\left(\binom{2k+1}{k+1}-\binom{2k+1}{k+2}+\binom{2k+1}{k+3}-\binom{2k+1}{k+4}+\cdots+\binom{2k+1}{k+c}-\binom{2k+1}{k+c+1}\right) \notag \\
    &= 2\left(\binom{2k}{k}-\binom{2k}{k+c+1}\right) \leq 2 \Big(\Big(\frac{k+1}{k-c}\Big)^{c+1}-1\Big)\binom{2k}{k+c+1} \enspace. \label{eq:mb2}
\end{align}
Combining \eqref{eq:vb} and \eqref{eq:mb2} and using that $1-x\leq \exp(-x)$ we obtain
\begin{equation}
\label{eq:mv2}
  \frac{m}{v}\leq \frac{1}{2(c+1)}\Big(\Big(\frac{k+1}{k-c}\Big)^{c+1}-1\Big)
  \leq \frac{1}{2(c+1)} \Big(\exp\Big(\frac{(c+1)^2}{k-c}\Big)-1\Big) \enspace.
\end{equation}
For even $c\geq 2$ we glue together $c+1$ many saturating cycles between the pairs of consecutive levels $(k-c,k-c+1),\ldots,(k,k+1),\ldots,(k+c,k+c+1)$ as before.
In this case the number of missed vertices $m$ can be computed very similarly to before as $2(\binom{2k}{k+1}-\binom{2k}{k+c+1})$, and this expression is bounded from above by the expressions in \eqref{eq:mb2}, implying that the same bound \eqref{eq:mv2} holds also in this case.

Combining the bounds \eqref{eq:mv1} and \eqref{eq:mv2} completes the proof of the theorem.
\end{proof}

\section*{Acknowledgements}

The authors thank Ji\v{r}\'{i} Fink, Jerri Nummenpalo and Robert \v{S}\'{a}mal for several stimulating discussions about the problems discussed in this paper. We also thank the reviewers for their helpful comments and suggestions.
The first author acknowledges the support by the Czech Science Foundation grant GA14-10799S.

\bibliographystyle{alpha}
\bibliography{../refs}

\end{document}